\let\oldbibliography\thebibliography
\renewcommand{\thebibliography}[1]{%
\oldbibliography{#1}%
\setlength{\itemsep}{0pt}%
}
\newtheorem{definition}{Definition}[section]
\newtheorem{theorem}{Theorem}[section]
\newtheorem{lemma}{Lemma}[section]
\newtheorem{corollary}{Corollary}[section]
\newtheorem{proposition}{Proposition}[section]
\newtheorem{remark}{Remark}[section]
\newcommand{\al}{\alpha}
\newcommand{\bt}{\begin{theorem}}
\newcommand{\et}{\end{theorem}}
\newcommand{\bl}{\begin{lemma}}
\newcommand{\el}{\end{lemma}}
\newcommand{\bd}{\begin{definition}}
\newcommand{\ed}{\end{definition}}
\newcommand{\bc}{\begin{corollary}}
\newcommand{\ec}{\end{corollary}}
\newcommand{\bp}{\begin{proof}}
\newcommand{\ep}{\end{proof}}
\newcommand{\bx}{\begin{example}}
\newcommand{\ex}{\end{example}}
\newcommand{\bi}{\begin{exercise}}
\newcommand{\ei}{\end{exercise}}
\newcommand{\bo}{\begin{prop}}
\newcommand{\eo}{\end{prop}}
\newcommand{\br}{\begin{remark}}
\newcommand{\er}{\end{remark}}
\newcommand{\be}{\begin{equation}}
\newcommand{\ee}{\end{equation}}
\newcommand{\ba}{\begin{align}}
\newcommand{\ea}{\end{align}}
\newcommand{\bn}{\begin{enumerate}}
\newcommand{\en}{\end{enumerate}}
\newcommand{\bg}{\begin{align*}}
\newcommand{\bcs}{\begin{cases}}
\newcommand{\ecs}{\end{cases}}
\newcommand{\sg}{\sigma}
\newcommand{\bean}{\begin{eqnarray*}}
\newcommand{\eean}{\end{eqnarray*}}
\numberwithin{equation}{section}
\begin{document}
\title{{\bf  Sharp blow up estimates  and precise asymptotic  behavior  of  singular   positive   solutions  to  fractional Hardy-H\'enon   equations}}  

\date{}
\author{\\{\bf Hui  Yang\thanks{Department of Mathematics, The Hong Kong University of Science and Technology, Clear Water Bay, Kowloon, Hong Kong, China.    E-mail address: mahuiyang@ust.hk} ,\;\;  Wenming Zou\thanks {Department of Mathematical Sciences, Tsinghua University, Beijing 100084, China. W. Zou was  supported by  NSFC. 
  E-mail address:  zou-wm@mail.tsinghua.edu.cn  }}\\
}

\maketitle
\begin{center}
\begin{minipage}{150mm}
\begin{center}{\bf Abstract}\end{center}
In this paper, we study the asymptotic behavior of positive  solutions of the fractional Hardy-H\'enon equation
$$
(-\Delta)^\sigma u = |x|^\alpha u^p ~~~~~~~~~~~  \textmd{in} ~ B_1 \backslash \{0\}
$$
with an isolated singularity at the origin, where $\sigma \in (0,  1)$ and the punctured unit ball $B_1 \backslash \{0\} \subset \mathbb{R}^n$ with $n \geq 2$.  When $-2\sigma < \alpha < 2\sigma$ and $\frac{n+\alpha}{n-2\sigma} < p < \frac{n+2\sigma}{n-2\sigma}$,  we give a  classification of isolated singularities of positive solutions,  and in particular,  this implies  sharp blow up estimates of singular solutions.    Further,  we  describe the precise asymptotic behavior of solutions near the singularity.   More generally,   we  classify  isolated boundary singularities and describe the precise asymptotic behavior of singular  solutions for a relevant degenerate elliptic equation with a nonlinear Neumann boundary condition.       
These results parallel those known for the Laplacian counterpart proved by  Gidas and Spruck (Comm. Pure Appl. Math. 34: 525-598, 1981),  but the methods are very different, since the ODEs analysis is a missing ingredient in the fractional case.  Our proofs are  based on a monotonicity formula,  combined with blow up (down) arguments,  Kelvin transformation and uniqueness of solutions of related degenerate equations on  $\mathbb{S}^{n}_+$.  We also investigate  isolated singularities located at infinity  of  fractional Hardy-H\'enon equations.    
 
\vskip0.10in 

\noindent {\it Keywords: } Isolated singularities;   Neumann boundary isolated singularities;  Precise asymptotic behavior;  Monotonicity formula;  Fractional Hardy-H\'enon equations  

\vskip0.10in

\noindent {\it Mathematics Subject Classification (2010):  35R11; 35J70; 35B09;  35B40 }

\end{minipage}

\end{center}

\vskip0.10in

\section{Introduction and Main Results}
In the classical paper \cite{G-S}, Gidas and Spruck studied the asymptotic behavior of positive  solutions of the following  equation 
\begin{equation}\label{GS01}
-\Delta  u= |x|^\alpha u^p~~~~~~~~~~~\textmd{in}~ B_1 \backslash  \{0\}
\end{equation}
with an isolated singularity at the origin, where the punctured unit ball $B_1 \backslash \{0\} \subset \mathbb{R}^n$ with $n \geq 3 $. Eq. \eqref{GS01} is usually called the Hardy ({\it resp.,} Lane-Emden, or H\'enon) equation for $\al <0$ ({\it resp.,} $\al=0$, $\al >0$). 
More specifically, assume 
$$
-2 < \alpha <2 ~~~~~~~~  \textmd{and} ~~~~~~~~ \frac{n + \alpha}{n-2} < p < \frac{n+2}{n-2}.
$$
Let $u \in C^2(B_1 \backslash  \{0\})$ be  a positive  solution of \eqref{GS01}.   Gidas-Spruck \cite{G-S}  proved that either the singularity at $x=0$ is removable, or there exist positive constants $c_1, c_2$ such that 
\begin{equation}\label{GSE-09}
\frac{c_1}{|x|^{(2+\al)/(p-1)}} \leq u(x) \leq \frac{c_2}{|x|^{(2+\al)/(p-1)}} ~~~~~~~~~  \textmd{near} ~~ x=0. 
\end{equation}
Further,  assume additionally  that $ p \neq \frac{n+2+2\alpha}{n-2}$, 
then they used the ODEs method  and the sharp blow up  estimate  \eqref{GSE-09} to derive the precise asymptotic behavior of  singular  solutions of \eqref{GS01} 
$$
|x|^{(2+\alpha)/(p-1)} u(x) \to C_0 ~~~~~~~~  \textmd{as} ~ x \to 0, 
$$
where
$$
C_0=\left\{  \frac{(2+\alpha)(n-2)}{(p-1)^2} \left( p - \frac{n+\alpha}{n-2} \right)  \right \}^{1/(p-1)}. 
$$
\vskip0.10in

When $\alpha =0$, Caffarelli-Gidas-Spruck \cite{C-G-S} found that every local positive solution  $u$ of \eqref{GS01} with $\frac{n}{n-2} \leq p \leq \frac{n+2}{n-2}$ is asymptotically radially symmetric
$$
u(x)=\bar{u}(|x|)(1 + O(|x|)) ~~~~~~~~  \textmd{as} ~ x \to 0, 
$$
where $\bar{u}(|x|)=\frac{1}{|\mathbb{S}^{n-1}|}\int_{\mathbb{S}^{n-1}} u(|x|\omega) d \omega$ is the spherical average of $u$.  With the help of this asymptotic radial symmetry,  they used the classical ODEs analysis to obtain the precise behavior of positive solutions near the singularity of \eqref{GS01} when  $\alpha =0$ and $\frac{n}{n-2} \leq p \leq \frac{n+2}{n-2}$.   

\vskip0.10in

In \cite{Li},   Li  proved  the asymptotic  radial symmetry of positive solutions of \eqref{GS01} with 
$$ 
-2 < \alpha \leq 0 ~~~~~~~~  \textmd{and} ~~~~~~~~1 < p\leq \frac{n+2+\alpha}{n-2}. 
$$
For other cases of $\alpha$ and $p$,  the asymptotic behavior  of  singular positive solutions of  \eqref{GS01} has also been understood very well,  see Brezis-Lions \cite{BL} and Lions \cite{L} for $\alpha =0$ and $1 < p< \frac{n}{n-2}$,  Zhang-Zhao \cite{ZZ} for $-2 < \al <2$ and $1 < p <\frac{n+\al}{n-2}$, Aviles \cite{A} for $-2 < \alpha < 2$ and $p=\frac{n+\alpha}{n-2}$,  Korevaar-Mazzeo-Pacard-Schoen \cite{K-M-P-S} for $\alpha =0$ and $p=\frac{n+2}{n-2}$, and Bidaut-V\'{e}ron and V\'{e}ron \cite{B-V} for $\alpha =0$ and $p > \frac{n+2}{n-2}$.

\vskip0.10in

In recent years, there has been an increasing interest in the study of equations involving a nonlocal diffusion operator, especially,  the fractional Laplacian, motivated by models of diverse physical phenomena such as anomalous diffusion and quasi-geostrophic flows \cite{BG,CV} and by applications in conformal geometry \cite{Ch-G,G-M-S,G-Q,GZ}.   Specially,  the following type of fractional equation       
\begin{equation}\label{Iso1} 
(-\Delta)^\sigma  u= |x|^\alpha u^p~~~~~~~~~~~\textmd{in}~ B_1 \backslash  \{0\}
\end{equation}
 has received great interest  and has been widely studied in  \cite{A-WCV,A-W2018,C-J-S-X,C-L-L,CLO,D-D-W,D-d-G-W,G-M-S,G-Q,J-d-S-X,J-L-X,LB1,LB2,YZ1,YZ2} and  references therein.   Here $\sigma \in (0, 1)$ and the fractional Laplacian operator $(-\Delta)^\sg$ is defined as
\begin{equation}\label{Def-F} 
(-\Delta)^\sg u(x) = c_{n, \sg} P.V. \int_{\mathbb{R}^n} \frac{u(x) - u(\xi)}{ |x-\xi|^{n+2\sg} } d\xi, 
\end{equation}
where $c_{n, \sg}$ is a normalization constant depending only on $n$ and $\sg$ and $P.V.$ stands for the Cauchy principal value.  In particular, when  $\alpha=0$ and $p=\frac{n+2\sigma}{n-2\sigma}$, the aforementioned asymptotic symmetry result of Caffarelli-Gidas-Spruck has been generalized to  the fractional setting by Caffarelli, Jin, Sire and Xiong \cite{C-J-S-X}.  More precisely,  the authors in \cite{C-J-S-X}  classified isolated singularities of positive solutions of \eqref{Iso1} and showed that every local positive solution $u$ of \eqref{Iso1} is asymptotically radially symmetric  
$$
u(x)=\bar{u}(|x|)(1 + O(|x|)) ~~~~~~~~  \textmd{as} ~ x \to 0, 
$$
where $\bar{u}(|x|)$ is the spherical average of $u$.  Li-Bao \cite{LB1} extended this asymptotic  radial  symmetry of positive solutions to the equation \eqref{Iso1} with   
$$
-2 \sigma< \alpha \leq 0 ~~~~~~~~  \textmd{and} ~~~~~~~~ \frac{n+\alpha}{n-2\sigma} < p \leq \frac{n+2\sigma+2\alpha}{n-2\sigma}.
$$
However, since the classical ODEs analysis is a missing ingredient in the fractional setting  to further analyze the   solutions of \eqref{Iso1} compared to the case when $\sigma =1$, the precise asymptotic behavior of positive  solutions near the singularity to  the fractional equation \eqref{Iso1}  remains as an open question.  In the recent papers \cite{YZ1,YZ2},  we established a monotonicity formula to classify isolated singularities and prove the precise asymptotic behavior of solutions to the fractional Lane-Emden equation (\eqref{Iso1} with $\al=0$) when $\frac{n}{n-2\sigma} < p <   \frac{n+2\sigma}{n-2\sigma} $.    We also refer to Fall-Felli \cite{F-F} for the precise asymptotic behavior of solutions to fractional elliptic equations with Hardy type potentials.    

\vskip0.10in 

One of the goals of this paper is to  describe  the precise asymptotic behavior of positive  solutions near the  singularity to the problem \eqref{Iso1} with Hardy weights ($-2 \sigma< \alpha <  0$) when 
$$
 \frac{n+\alpha}{n-2\sigma} < p \leq   \frac{n+2\sigma+\alpha}{n-2\sigma} ~~~~~~ \textmd{and}  ~~~~~~p \neq \frac{n+ 2\sg +2\al}{n- 2\sg}. 
$$
One motivation for studying singular solutions of \eqref{Iso1} with Hardy weights comes from the study of asymptotic behavior at  infinity of solutions of the fractional Lane-Emden equation, which can be reduced to   the similar problem for solutions near the origin  of \eqref{Iso1} with Hardy weights  via the Kelvin transformation.   We assume that $u\in C^2( B_1 \backslash  \{0\} )$ and 
$$
u\in \mathcal{L}_\sg (\mathbb{R}^n): = \left\{u \in L_{\textmd{loc}}^1 (\mathbb{R}^n) : \int_{\mathbb{R}^n} \frac{|u(x)|}{(1+ |x|)^{n+2\sg} } dx < +\infty \right\},
$$
then $(-\Delta)^\sg u(x)$ is well-defined at every point $x\in B_1 \backslash  \{0\}$. Our first main result is the following precise behavior of singular   solutions of \eqref{Iso1}.  

\begin{theorem}\label{AB-T1}
Assume  $n \geq 2$. Let $u \in C^2(B_1 \backslash \{0\}) \cap \mathcal{L}_\sigma(\mathbb{R}^n)$ be a positive solution of \eqref{Iso1} with $-2 \sigma< \alpha \leq 0 $, $\frac{n+\alpha}{n-2\sigma} < p \leq  \frac{n+2\sigma+\alpha}{n-2\sigma}$ and $p \neq \frac{n+ 2\sg +2\al}{n- 2\sg}$. 
Then either the singularity at $x=0$ is removable, or 
\be\label{Beh-0D}
|x|^{(2\sigma +\alpha)/(p-1)} u(x) \to C_{p,\sigma,\alpha} ~~~~~~~~  \textmd{as} ~ x \to 0, 
\ee
where 
\begin{equation}\label{A}
C_{p,\sigma,\alpha}=\left\{\Lambda\left(\frac{n-2\sg}{2} - \frac{2\sg + \alpha}{p-1}\right)\right\}^{\frac{1}{p-1}}
\end{equation}
and the function $\Lambda(\tau)$ is defined by 
\begin{equation}\label{AAA}
\Lambda(\tau)=2^{2\sg} \frac{\Gamma(\frac{n+2\sg+2\tau}{4})   \Gamma(\frac{n+2\sg-2\tau}{4})}{\Gamma(\frac{n-2\sg-2\tau}{4})\Gamma(\frac{n-2\sg+2\tau}{4})}.
\end{equation}
\end{theorem}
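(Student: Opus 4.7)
The plan is to localize the nonlocal equation via the Caffarelli--Silvestre extension, pass to Emden--Fowler (logarithmic cylindrical) coordinates so that the equation becomes autonomous in the radial variable, derive a Pohozaev-type monotonicity formula that forces every blow-down limit to be independent of the radial variable, and finally invoke a uniqueness theorem for positive solutions of a degenerate elliptic problem on $\BS^n_+$ to identify the common limit as the explicit constant $C_{p,\sg,\al}$ in \eqref{A}. Throughout I would take for granted the two-sided bound $c_1 |x|^{-(2\sg+\al)/(p-1)} \le u(x) \le c_2 |x|^{-(2\sg+\al)/(p-1)}$ near $x=0$ (the fractional analogue of \eqref{GSE-09}), established earlier in the paper by a combination of regularity, a doubling argument and a Liouville-type statement for entire solutions.

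\textbf{Localization and cylindrical coordinates.} Concretely, I would replace $(-\Delta)^\sg u = |x|^\al u^p$ by its Caffarelli--Silvestre extension $U(x,y)$ on $\R^{n+1}_+$, i.e. $\operatorname{div}(y^{1-2\sg}\nabla U)=0$ with $U(\cdot,0)=u$ and $-\lim_{y\to 0^+}y^{1-2\sg}\pa_y U = d_{n,\sg}|x|^\al u^p$. Passing to coordinates $x=e^{-t}\q$, $y=e^{-t}s$ with $\q\in\BS^{n-1}$, $s\ge 0$, and setting
\be
v(t,\q,s) = e^{-t(2\sg+\al)/(p-1)}\, U(e^{-t}\q, e^{-t}s),
\ee
one finds that $v$ satisfies a weighted degenerate elliptic equation on the full cylinder $\BR\times\BS^n_+$ that is autonomous in $t$; the exponent $(2\sg+\al)/(p-1)$ is chosen precisely so that the factors $|x|^\al$ and $u^p$ combine into a translation-invariant nonlinearity. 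The a priori bound guarantees that $v$ is uniformly bounded above and below by positive constants, and standard fractional/degenerate Schauder estimates give uniform $C^2_{\loc}$ control. The Kelvin transform $u(x)=|x|^{2\sg-n}\tilde u(x/|x|^2)$ provides the analogous picture at infinity and can be used to recycle arguments between the two ends of the cylinder.

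\textbf{Monotonicity, blow-down, and uniqueness.} I would then construct a Hamiltonian-type energy $E(t)$ on the cross-section $\{t\}\times\BS^n_+$ associated with the autonomous equation and prove $\frac{d}{dt}E(t)\ge 0$ by integration by parts on the weighted half-cylinder, together with the rigidity statement that $\frac{d}{dt}E(t)=0$ iff $v$ is $t$-independent. The uniform bounds on $v$ imply that $E$ is bounded, so $E(t)\to E_\iy$ and $\frac{d}{dt}E(t)\to 0$ as $t\to\iy$. Any sequence $t_k\to\iy$ then admits a subsequence along which $v(t_k+\cdot,\cdot,\cdot)$ converges in $C^2_{\loc}$ to a limit $v_\iy$ with $\pa_t v_\iy=0$, and whose trace $w_\iy(\q)$ on $s=0$ is a positive solution of a degenerate elliptic boundary problem on $\BS^n_+$. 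Uniqueness for this limit problem, for which the hypothesis $p\ne(n+2\sg+2\al)/(n-2\sg)$ is essential (at that borderline a Kelvin/conformal family of nonconstant solutions appears), identifies $w_\iy$ as the constant $C_{p,\sg,\al}$. Since every blow-down sequence produces the same limit, full convergence \eqref{Beh-0D} follows.

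\textbf{Main obstacle.} The technically hardest step will be the derivation and rigidity of the monotonicity formula in the presence of the extra Hardy--H\'enon weight $|x|^\al$: the Pohozaev identity on the weighted half-cylinder mixes the $y^{1-2\sg}$-degeneracy with the $\al$-dependent boundary terms, and a delicate sign analysis is needed to ensure monotonicity and to conclude that stationary points of $E$ are genuinely $t$-independent. A second non-trivial difficulty is the uniqueness statement on $\BS^n_+$: the rescaled problem is not variational for $\al\ne 0$, so classical moving-plane or moving-sphere techniques must be carefully adapted to the degenerate boundary operator, and it is exactly at $p=(n+2\sg+2\al)/(n-2\sg)$ that this uniqueness must fail, which is reflected in the excluded exponent in the hypothesis.
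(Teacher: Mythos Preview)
Your overall framework---Caffarelli--Silvestre extension, Emden--Fowler coordinates, Pohozaev/Hamiltonian monotonicity, blow-down to a homogeneous limit, then identification of that limit---is exactly the skeleton the paper uses. Two points are worth flagging.

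First, the sign of the monotonicity flips across the Hardy--Sobolev exponent $p_S(\al)=\frac{n+2\sg+2\al}{n-2\sg}$: with your convention $t=-\ln r$, one has $\frac{d}{dt}E\ge 0$ when $p>p_S(\al)$ but $\frac{d}{dt}E\le 0$ when $p<p_S(\al)$. The paper splits accordingly into two cases; either direction of monotonicity suffices for the blow-down, so this is only an oversight, but it signals that the two regimes are handled differently.

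Second, and more substantively, the paper does \emph{not} attack uniqueness on $\BS^n_+$ head-on as you propose. In the subcritical case $p<p_S(\al)$ it invokes an external radial-symmetry theorem (Li--Bao) for positive entire solutions of $(-\Delta)^\sg u=|x|^\al u^p$ in $\R^n\setminus\{0\}$, which forces the trace of the homogeneous blow-down to be constant on $\partial\BS^n_+$; an elementary computation then pins down that constant as $C_{p,\sg,\al}$. For the supercritical range $p_S(\al)<p\le\frac{n+2\sg+\al}{n-2\sg}$---where, as the paper notes, moving-plane symmetry for entire singular solutions may genuinely fail---it applies the Kelvin transform $\widetilde U(Y)=|X|^{n-2\sg}U(X)$ to convert the problem at the origin into an exterior problem with new weight exponent $\vartheta=p(n-2\sg)-(n+2\sg+\al)$, checks that now $p<p_S(\vartheta)$, and recycles the subcritical argument at infinity (using $\Lambda(\tau)=\Lambda(-\tau)$ to match the constants). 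Your remark that Kelvin relates the two ends of the cylinder is on target, but in the paper it plays the decisive structural role of reducing the supercritical case to the subcritical one, precisely to avoid proving uniqueness on $\BS^n_+$ directly. Your proposed direct adaptation of moving-sphere methods on the half-sphere might also work, but it is the step the paper deliberately circumvents. A small correction: the limiting problem on $\BS^n_+$ \emph{is} variational---after the Emden--Fowler substitution the factor $|x|^\al$ disappears and only shifts the linear coefficient $J_2$---so the obstacle you anticipate there is not non-variationality per se.
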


\br\label{T2R1}
If $\al < -2\sg$, then by Corollary \ref{2Co1}  Eq. \eqref{Iso1} has no positive solution in any domain $\Omega$ containing the origin. 
\er

Under the assumptions of Theorem \ref{AB-T1}, if  $u$ is a positive solution of \eqref{Iso1} with a non-removable singularity, then Theorem \ref{AB-T1} tells us  that $u$ is asymptotic to a radial solution $u^*(|x|)$ to  the same equation in  $\mathbb{R}^n \backslash \{0\}$,  where $u^*(|x|)$ is
$$
u^*(|x|) \equiv C_{p,\sigma,\alpha} |x|^{-\frac{2\sg +\al}{p-1}}. 
$$
For when $\sg =1$, Theorem \ref{AB-T1} was proved in \cite{G-S} by Gidas and Spruck. We may also see Caffarelli-Gidas-Spruck \cite{C-G-S} for the case $\sg=$1 and $\al=0$.  Unlike the proofs of \cite{C-G-S,G-S} where the ODEs analysis is an important ingredient, our proof of Theorem \ref{AB-T1} is based on a monotonicity formula, combined with  the blow up (down) arguments, the Kelvin transformation and an uniqueness result of solutions of related degenerate equations on  $\mathbb{S}^{n}_+$.  As mentioned earlier, a similar monotonicity formula for the fractional Lane-Emden equation was established  and used in our recent papers \cite{YZ1,YZ2},  where Theorem \ref{AB-T1} was obtained whenever  $\al=0 $.    We recall  the Hardy-Sobolev exponent
$$
p_S(\al):=\frac{n+2\sg+2\al}{n-2\sg}. 
$$
This exponent plays a critical role in the equation \eqref{Iso1}. When  $-2\sg < \al < 0$ and $\frac{n+\alpha}{n-2\sigma} < p < p_S(\al)$,  that is for the Hardy-Sobolev subcritical case,   our proof of Theorem \ref{AB-T1} is  similar to that in \cite{YZ1,YZ2}. Remark that, when $-2\sg < \al < 0$, Theorem \ref{AB-T1} also holds in  the Hardy-Sobolev supercritical range
$$
p_S(\al) < p \leq \frac{n+2\sg+\al}{n-2\sg}. 
$$
This is essential for applying Theorem \ref{AB-T1} to study asymptotic behavior at  infinity of solutions of the fractional Lane-Emden equation.     We emphasize that  the proof of Theorem \ref{AB-T1}  in the supercritical case is different from that in subcritical case.  One significant difference is that the  energy integral \eqref{EI000} is  non-decreasing in the subcritical case, but it is non-increasing in the supercritical case.   The other difference is that  it seems difficult to prove that every singular positive solution of \eqref{Iso1} in $\mathbb{R}^n \backslash \{0\}$  is radially symmetric  in  the supercritical case.   These differences lead us  to need some new techniques to  deal  with the supercritical case.  

\vskip0.10in

For  the Hardy-Sobolev critical case $p=p_S(\al)$ ($-2 < \al <0$) and the H\'enon's  case $0<\al <2\sg$,  we establish  the  following  classification result for isolated singularities, in particular,  it  implies  the sharp blow up estimates  of   singular solutions.        
\begin{theorem}\label{C-T2} 
Assume  $n \geq 2$. Let $u \in C^2(B_1 \backslash \{0\}) \cap \mathcal{L}_\sigma(\mathbb{R}^n)$ be a positive solution of \eqref{Iso1}.   Assume 
$$
-2 \sigma< \alpha < 2\sg ~~~~~~~~ \textmd{and} ~~~~~~~ \frac{n+\alpha}{n-2\sigma} < p <  \frac{n+2\sigma}{n-2\sigma}. 
$$
Then either the singularity at $x=0$ is removable, or there exist positive constants $C_1$ and $C_2$ such that 
\be\label{U-L}
\frac{C_1}{|x|^{(2\sg + \al)/(p-1)}} \leq u(x) \leq \frac{C_2}{|x|^{(2\sg + \al)/(p-1)}} ~~~~~~~~~  \textmd{for } ~ x \in B_{1/2} \backslash \{0\}.  
\ee
\end{theorem}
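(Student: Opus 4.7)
The plan is to reduce both bounds in \eqref{U-L} to Liouville-type rigidity in $\mathbb{R}^n$ via complementary blow-up and blow-down arguments, after first disposing of the removable case. Since $(-\Delta)^\sigma u = |x|^\alpha u^p \ge 0$ and $u\in\mathcal{L}_\sigma(\mathbb{R}^n)$, the standard removable-singularity theorem for $\sigma$-superharmonic functions yields the dichotomy: either $u$ stays bounded near $0$, in which case the singularity is removable, or $\limsup_{x\to 0} u(x)=+\infty$. It thus suffices to establish \eqref{U-L} under the latter hypothesis.

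For the upper bound I would argue by contradiction, assuming $\sup_{B_{1/2}\setminus\{0\}} |x|^{(2\sigma+\alpha)/(p-1)} u(x)=+\infty$. Applying the Pol\'{a}\v{c}ik--Quittner--Souplet doubling lemma to $M(x):=u(x)^{(p-1)/(2\sigma+\alpha)}$ produces concentration points $y_k\to 0$ and scales $\lambda_k\to 0$ with $|y_k|/\lambda_k\to\infty$, so that the rescaled functions
\be
v_k(x) := u(y_k)^{-1}\,u(y_k+\lambda_k x)
\ee
are locally bounded, satisfy $v_k(0)=1$, and obey an equation which---since $|y_k+\lambda_k x|^\alpha \to |y_k|^\alpha$ uniformly on bounded sets---reduces after a routine renormalization to $(-\Delta)^\sigma v_k = v_k^p$ on expanding balls. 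Interior fractional regularity together with the $\mathcal{L}_\sigma$-tail control from the doubling selection extracts a subsequential limit $v_\infty\ge 0$ with $v_\infty(0)=1$, solving the pure Lane--Emden equation $(-\Delta)^\sigma v_\infty = v_\infty^p$ on all of $\mathbb{R}^n$. The fractional Liouville theorem in the Sobolev-subcritical range $1<p<(n+2\sigma)/(n-2\sigma)$ then contradicts $v_\infty(0)=1$.

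For the lower bound I would again argue by contradiction, assuming $\liminf_{x\to 0} |x|^{(2\sigma+\alpha)/(p-1)} u(x)=0$. Pick $x_k\to 0$ realising the liminf and blow down via
\be
\tilde v_k(x) := |x_k|^{(2\sigma+\alpha)/(p-1)}\,u(|x_k|x),
\ee
which by the scale invariance of $(-\Delta)^\sigma u = |x|^\alpha u^p$ satisfies the same equation on a growing exhaustion of $\mathbb{R}^n\setminus\{0\}$. The upper bound just proved forces $\tilde v_k$ to be locally bounded on $\mathbb{R}^n\setminus\{0\}$, while by construction $\tilde v_k(\hat x_k)\to 0$ at $\hat x_k:=x_k/|x_k|\to\hat x_\infty\in\mathbb{S}^{n-1}$. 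A subsequential limit $\tilde v_\infty\ge 0$ has $\tilde v_\infty(\hat x_\infty)=0$, and the fractional strong maximum principle forces $\tilde v_\infty\equiv 0$. I would close the contradiction via the monotonicity formula alluded to in the introduction: the collapse $\tilde v_k\to 0$ on bounded annular shells would drive the associated rescaled energy to zero across the sequence of scales $|x_k|$, which the monotonicity inequality and the non-removability hypothesis jointly rule out.

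The main obstacle is ensuring the Liouville theorem is available in the full range $\frac{n+\alpha}{n-2\sigma}<p<\frac{n+2\sigma}{n-2\sigma}$. When $\alpha\ge 0$, the range lies below the Hardy--Sobolev exponent $p_S(\alpha)$ and classical fractional moving-plane/sphere arguments suffice. When $\alpha<0$, the portion $p_S(\alpha)<p<(n+2\sigma)/(n-2\sigma)$ is Hardy--Sobolev supercritical---positive solutions of the limit equation need not be radially symmetric---and nonexistence must be extracted from the monotonicity formula together with the uniqueness of positive solutions of the associated degenerate equation on $\mathbb{S}^n_+$ that the authors foreshadow in the introduction. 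Propagating this refined Liouville theorem through both the blow-up and the blow-down steps is the technical heart of the proof.
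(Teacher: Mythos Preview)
Your upper-bound argument is essentially the paper's: the doubling lemma of Pol\'a\v{c}ik--Quittner--Souplet followed by a subcritical Liouville theorem. The paper carries this out through the Caffarelli--Silvestre extension (Lemma~\ref{LM-01} and Proposition~\ref{SDE}), but the logic is the same. One clarification: the limiting equation after doubling is the \emph{pure} Lane--Emden equation $(-\Delta)^\sigma v_\infty=v_\infty^p$ with $p<\frac{n+2\sigma}{n-2\sigma}$, so the standard fractional Liouville theorem suffices; the Hardy--Sobolev exponent $p_S(\alpha)$ plays no role here, and your ``main obstacle'' paragraph misplaces it.

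The lower-bound sketch, however, has a genuine gap. You show that along a subsequence $r_k=|x_k|\to0$ the rescaled $\tilde v_k\to0$, and you correctly observe that this forces $E(r_k;U)\to0$ and hence, by monotonicity, $E(0^+;U)=0$. But you then assert that ``the monotonicity inequality and the non-removability hypothesis jointly rule this out'', and this is where the argument is incomplete. In the paper the passage from $\liminf=0$ to a contradiction requires two separate, nontrivial steps that you have not supplied:
\begin{itemize}
\item[(i)] \textbf{Proposition~\ref{P303}}: one must upgrade $\liminf=0$ to the full $\lim_{|x|\to0}|x|^{(2\sigma+\alpha)/(p-1)}u(x)=0$. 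In the subcritical case $p\le p_S(\alpha)$ the paper does \emph{not} use your blow-down; instead it selects local minima of $r\mapsto r^{(2\sigma+\alpha)/(p-1)}\bar u(r)$, blows up there, applies a B\^ocher theorem to identify the limit as $a|X|^{-(n-2\sigma)}+b$, and derives a strict sign contradiction from $E\ge0$. In the supercritical case $p>p_S(\alpha)$ (where $E$ is non-\emph{increasing}) the paper uses your $E(0^+)=0$ to show that \emph{every} blow-down limit $U^0$ has $E(r;U^0)\equiv0$, hence is homogeneous, and then a Pohozaev-type identity on $\mathbb{S}^n_+$ forces $U^0\equiv0$. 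Only this yields the full limit.
\item[(ii)] \textbf{Proposition~\ref{Re089}}: even granting $\lim=0$, one must still show the singularity is removable. This is done by a comparison argument with explicit barriers $\Psi_\mu(X)=|X|^{-\mu}(1-\delta(t/|X|)^{2\sigma})$, bootstrapping $u(x)\le C|x|^{-\tau}$ for every $\tau<\frac{2\sigma+\alpha}{p-1}$, and then invoking local regularity. Your initial appeal to a ``standard removable-singularity theorem for $\sigma$-superharmonic functions'' only gives the trivial dichotomy bounded/unbounded; it does not furnish this step.
\end{itemize}
The uniqueness on $\mathbb{S}^n_+$ that you flag does enter, but in step (i) for the \emph{lower} bound (supercritical case), not in any Liouville theorem for the upper bound.
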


\br\label{T2R10-9275}
In Theorems  \ref{AB-T1} and \ref{C-T2},  since we do not use any special structure of the ball,  $B_1$ can be replaced by any open set containing the origin $0$.  
\er

The upper bound in \eqref{U-L} can be obtained by using  a doubling lemma of Pol\'{a}\v{c}ik-Quittner-Souplet \cite{P-Q-S}.    To derive  the lower bound  in \eqref{U-L}, one main difficulty is to prove Proposition \ref{P303} in Section 3.  In the proof of Theorem 3.3 in \cite{G-S},  Gidas and Spruck  proved the lower bound in \eqref{GSE-09} by using  the following statement: \\
"{\it  If $\liminf_{|x| \to 0} |x|^\frac{2 +\al}{p-1}u(x) =0$, then the  Harnack inequality (a Harnack inequality similar to \eqref{Har01} in this paper) implies that }
$$
\lim_{|x| \to 0} |x|^{\frac{2 +\al}{p-1}} u(x) =0."
$$
But this seems not  obvious and  requires  more explanation.  Aviles  also pointed  out  this point on p.190 in \cite{A}.  In this paper, we will make full use of a monotonicity formula  (Proposition \ref{P302})  to prove Proposition \ref{P303}. Remark that, our proof also applies to Eq.  \eqref{GS01} and thus we  could  give a  rigorous proof of  the  above statement.  We believe that the idea used here can be applied in other situations to deal with similar questions.  We also mention that Chen-Lin \cite{C-L2015} recently proved  a similar  result as Proposition \ref{P303} of this paper to a critical elliptic system by applying  Pohozaev identity, see Corollary 4.1, Lemma 4.3 and Lemma 4.4 of \cite{C-L2015}, where a spherical Harnack inequality also holds for $w_1+w_2$ but the proof is very delicate and complicated.  Our poof of Proposition \ref{P303} is also  different from the one in \cite{C-L2015}.   

\vskip0.10in

We study Eq. \eqref{Iso1} via the well known extension theorem for the fractional Laplacian $(-\Delta)^\sigma$  established by Caffarelli-Silvestre \cite{C-S},   through which  one  can study the isolated boundary singularities of a  degenerate elliptic equation with a nonlinear Neumann boundary condition in the upper half-space $\mathbb{R}^{n+1}_+$ (see \eqref{C-S-E00} and \eqref{C-S-E022} in Section 2).   We denote $\mathcal{B}_R^+$ as the upper half-ball $\mathcal{B}_R\cap \mathbb{R}_+^{n+1}$,  $\partial^+\mathcal{B}_R^+=\partial \mathcal{B}_R^+ \cap \mathbb{R}_+^{n+1}$ as the positive part of   $\partial \mathcal{B}_R^+$,  and $\partial^0 \mathcal{B}_R^+$ as the flat part of $\partial \mathcal{B}_R^+$ which is the ball $B_R$ in $\mathbb{R}^n$.
More generally, we are concerned with  the corresponding  degenerate elliptic equation in $\mathcal{B}_1^+$   with an isolated Neumann boundary singularity    
\begin{equation}\label{Iso3-2020}
\begin{cases}
-\textmd{div}(t^{1-2\sg}  \nabla U)=0~~~~~~~~~& \textmd{in} ~ \mathcal{B}_1^+,\\
\frac{\partial U}{\partial \nu^\sg}(x, 0)=\kappa_\sg |x|^\al U^p(x, 0)~~~~~~~~& \textmd{on} ~ \partial^0\mathcal{B}_1^+ \backslash  \{0\},
\end{cases}
\end{equation}
where $\frac{\partial U}{\partial \nu^\sg}(x, 0) := -\lim_{t\rightarrow 0^+}t^{1-2\sg} \partial_t U(x, t)$, the constant $\kappa_\sg=\frac{\Gamma(1-\sg)}{2^{2\sg-1} \Gamma(\sg)}$ and  $\Gamma$ is the Gamma function. By the  extension theorem of  Caffarelli and Silvestre,  if  one  knows  the  behavior  of  the  traces
$u(x):=U(x, 0)$
of the nonnegative solutions $U(x,t)$ of \eqref{Iso3-2020} near the singularity,   then  the behavior of nonnegative solutions   of  \eqref{Iso1}  follows.       

\vskip0.10in

We say that $U$ is a  nonnegative weak solution of \eqref{Iso3-2020} if $U$ is in the weighted Sobolev space $W^{1, 2}(t^{1-2\sg}, \mathcal{B}^+_1 \backslash \overline{\mathcal{B}}^+_\epsilon)$ for every $\epsilon > 0$,  $U \geq 0$, and it  satisfies  \eqref{Iso3-2020} in the sense of distributions  away from 0,  that is,     
\begin{equation}\label{Solu}
\int_{\mathcal{B}_1^+} t^{1-2\sg} \nabla U \cdot\nabla \Phi= \kappa_\sg \int_{\partial^0\mathcal{B}_1^+} |x|^\al U^p \Phi 
\end{equation}
for  every  nonnegative  $\Phi \in C_c^\infty\left((\mathcal{B}_1^+  \cup \partial^0\mathcal{B}_1^+) \backslash  \{0\}\right)$.   It follows from the regularity results  in \cite{Cab-S,J-L-X} that $U(x, t)$ is locally H\"{o}lder continuous in $\overline{\mathcal{B}}^+_{3/4} \backslash \{0\}$.  We use capital letters, such as $X=(x, t)\in \mathbb{R}^n \times \mathbb{R}_+$,  to  denote  points in $\mathbb{R}_+^{n+1}$.  
Next,  we first  classify  the  isolated boundary  singularities of the equation \eqref{Iso3-2020}.    

\begin{theorem}\label{C-T2-2020} 
Assume  $n \geq 2$. Let $U$ be a nonnegative weak solution of \eqref{Iso3-2020}.   Assume 
$$
-2 \sigma< \alpha < 2\sg ~~~~~~~~ \textmd{and} ~~~~~~~ \frac{n+\alpha}{n-2\sigma} < p <  \frac{n+2\sigma}{n-2\sigma}. 
$$
Then either the singularity at $x=0$ is removable, i.e., $U(X)$ can be extended to a continuous solution in $\overline{\mathcal{B}_{1/2}^+}$, or there exist two positive constants $C_1$ and $C_2$ such that 
\be\label{U-L-2020}
\frac{C_1}{|X|^{(2\sg + \al)/(p-1)}} \leq U(X) \leq \frac{C_2}{|X|^{(2\sg + \al)/(p-1)}}. 
\ee
\end{theorem}

Furthermore, we  can describe  the  precise asymptotic  behavior of singular positive  solutions $U(x, t)$ of \eqref{Iso3-2020} as $(x, t) \to 0$.     

\begin{theorem}\label{AB-T1-2020}
Assume  $n \geq 2$. Let $U$ be a nonnegative weak solution of \eqref{Iso3-2020} with $-2 \sigma< \alpha \leq 0 $, $\frac{n+\alpha}{n-2\sigma} < p \leq  \frac{n+2\sigma+\alpha}{n-2\sigma}$ and $p \neq \frac{n+ 2\sg +2\al}{n- 2\sg}$. 
Then either the singularity at $x=0$ is removable, or 
\be\label{Beh-0D-2020}
U(x,t)=  C_{p,\sigma,\alpha} \int_{\mathbb{R}^n} P_\sigma(x-y, t) |y|^{- \frac{2\sigma +\alpha}{p-1}} dy   \Big(1 + o(1) \Big)  ~~~~~~~~  \textmd{as} ~ (x, t) \to 0, 
\ee
where the constant $C_{p,\sigma,\alpha}$ is given by \eqref{A}-\eqref{AAA},   $P_\sigma(x, t)$ is the Poisson  kernel 
\be\label{Poisson0814}
P_\sg(x, t)=p_{n, \sg}\frac{t^{2\sg}}{(|x|^2 + t^2)^{\frac{n+2\sg}{2}}}
\ee
and $p_{n, \sg}$ is a positive constant chosen such that $\int_{\mathbb{R}^n} P_\sg(x, 1)dx=1$.  
\end{theorem}

Remark  that Theorems \ref{C-T2-2020} and \ref{AB-T1-2020}  give us not only the  asymptotic   behavior of the trace $u(x)$ of a singular solution $U(x, t)$,  but also the  asymptotic behavior of the solution $U(x, t)$ near the boundary singularity.

\vskip0.10in
The following two theorems treat  the isolated singularities located at infinity. 
\begin{theorem}\label{C-T3}
Assume  $n \geq 2$.  Let $u \in \mathcal{L}_\sg(\mathbb{R}^n)$  be a nonnegative  $C^2$ solution of
\begin{equation}\label{Iny}
(-\Delta)^\sigma u = |x|^\al u^p~~~~~~~~~~~ \textmd{in} ~ \{x \in \mathbb{R}^n  : |x| > 1\}
\end{equation}
with $\al > -2\sg$ and $1 < p <\frac{n+2\sg}{n-2\sg}$. 
\begin{itemize}

\item [(1)] If $1 < p <\frac{n+\al}{n-2\sg}$, then necessarily  $u(x) \equiv 0$ in $\{x \in \mathbb{R}^n  : |x| > 1\}$.  

\item [(2)] If $\frac{n+\alpha}{n-2\sigma} < p <  \frac{n+2\sigma}{n-2\sigma}$,  then either the singularity at $\infty$ is removable, i.e., there exists $C >0$ such that
$$
u(x) \leq \frac{C}{|x|^{n-2\sg}}~~~~~~~~~~~~~  \textmd{near} ~~ x=\infty, 
$$
or there exist positive constants $C_1$, $C_2$ such that 
$$
\frac{C_1}{|x|^{(2\sg + \al)/(p-1)}} \leq u(x) \leq \frac{C_2}{|x|^{(2\sg + \al)/(p-1)}} ~~~~~~~~~  \textmd{near} ~~ x=\infty. 
$$
\end{itemize}
\end{theorem}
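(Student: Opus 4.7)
The natural strategy is to turn both parts into statements about isolated singularities at the origin via the Kelvin transform, and then invoke Theorem \ref{C-T2} and Remark \ref{T2R1}. Set
\[
v(x) := |x|^{-(n-2\sigma)}\, u\!\bigl(x/|x|^2\bigr), \qquad 0 < |x| < 1.
\]
The conformal covariance of $(-\Delta)^\sigma$ under inversion yields by direct computation
\[
(-\Delta)^\sigma v(x) = |x|^{\alpha'} v^p(x) \quad \text{in } B_1 \setminus \{0\}, \qquad \alpha' := p(n-2\sigma) - n - 2\sigma - \alpha,
\]
and $u\in\mathcal{L}_\sigma(\mathbb{R}^n)$ transfers to $v\in\mathcal{L}_\sigma(\mathbb{R}^n)$. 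The strong maximum principle reduces us to the case $v > 0$ (otherwise $u\equiv 0$ by the same inversion).

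For part (2), using $\alpha>-2\sigma$ and $p<(n+2\sigma)/(n-2\sigma)$ one checks $-2\sigma<\alpha'<-\alpha<2\sigma$, while
\[
\tfrac{n+\alpha'}{n-2\sigma} \;=\; p - \tfrac{2\sigma+\alpha}{n-2\sigma} \;<\; p,
\]
so the pair $(\alpha', p)$ satisfies the hypotheses of Theorem \ref{C-T2} applied to $v$ in $B_1\setminus\{0\}$. A removable singularity of $v$ at $0$ means $v$ is bounded near the origin, which via $u(y)=|y|^{-(n-2\sigma)}v(y/|y|^2)$ translates into $u(y) \le C|y|^{-(n-2\sigma)}$ near infinity. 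Otherwise the two-sided bound on $v$ given by Theorem \ref{C-T2}, together with the algebraic identity
\[
-(n-2\sigma) + \tfrac{2\sigma+\alpha'}{p-1} \;=\; -\tfrac{2\sigma+\alpha}{p-1},
\]
transforms directly into the claimed bound $C_1|y|^{-(2\sigma+\alpha)/(p-1)}\le u(y)\le C_2|y|^{-(2\sigma+\alpha)/(p-1)}$ near infinity.

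For part (1), the hypothesis $1<p<(n+\alpha)/(n-2\sigma)$ is equivalent to $\alpha'<-2\sigma$. The plan is to invoke Remark \ref{T2R1}/Corollary \ref{2Co1}, which prohibits positive solutions of the fractional Hardy-H\'enon equation with weight exponent below $-2\sigma$ in any domain containing the origin. Since $v$ is a priori defined only on $B_1\setminus\{0\}$, I would first establish that the singularity of $v$ at $0$ is removable, so that $v$ extends to a nonnegative $\mathcal{L}_\sigma$ solution of the same equation on the ball $B_1$, and then Corollary \ref{2Co1} forces $v\equiv 0$, hence $u\equiv 0$ on $|x|>1$.

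The main obstacle I expect is the removable-singularity step in part (1): $\alpha'<-2\sigma$ is precisely the borderline regime where a Theorem \ref{C-T2}-style classification is not available. A workable route is to test the equation against smooth cutoffs supported in shrinking annuli around $0$ and combine the $\mathcal{L}_\sigma$ control of the left-hand side with the strong singularity of $|x|^{\alpha'}$ on the right to force the local integrals of $v^p$ near the origin to vanish, yielding boundedness near $0$, after which the standard $\sigma$-harmonic removable-singularity result extends $v$ to $B_1$. A cleaner alternative would be to re-run the test-function proof of Corollary \ref{2Co1} directly on the punctured ball, which would bypass the extension step altogether and is, in spirit, the same computation.
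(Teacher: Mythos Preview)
Your approach via the Kelvin transform is exactly the paper's, and part (2) is carried out just as the paper does it. For part (1), though, you have misread Corollary~\ref{2Co1}: it is stated for nonnegative weak solutions of \eqref{Iso3}, and the boundary condition in \eqref{Iso3} is imposed only on $\partial^0\mathcal{B}_1^+\setminus\{0\}$---the equation is already posed on the \emph{punctured} ball, with no assumption that the solution extends across the origin. (Indeed, the proof of Corollary~\ref{2Co1} uses the upper bound \eqref{1se01}, which when $\alpha<-2\sigma$ forces $U(x,0)\to 0$ as $x\to 0$, and then derives a contradiction from the Harnack/B\^ocher-type lower bound in Proposition~3.1 of \cite{J-L-X}.) So no preliminary removable-singularity step is needed: the paper simply observes that $\varrho<-2\sigma$ and applies Corollary~\ref{2Co1} directly to (the extension of) $\widetilde{u}$ to conclude $\widetilde{u}\equiv 0$ in $B_1\setminus\{0\}$. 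Your ``cleaner alternative'' of re-running the argument on the punctured ball is precisely what Corollary~\ref{2Co1} already does; the obstacle you anticipated is not there.
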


\begin{theorem}\label{C-T4}
Assume  $n \geq 2$.  Let $u \in \mathcal{L}_\sg(\mathbb{R}^n)$  be a positive $C^2$ solution of \eqref{Iny} with  $-2\sg < \al \leq 0$, $\frac{n+\al}{n-2\sg} < p  \leq \frac{n+2\sg +\al}{n-2\sg}$ and $p\neq \frac{n+2\sg +2\al}{n-2\sg}$. Then either there exists $\beta >0$ such that 
$$
\lim_{|x| \to  \infty} |x|^{n-2\sg}u(x) = \beta, 
$$
or 
$$
\lim_{|x| \to  \infty} |x|^{(2\sigma +\alpha)/(p-1)} u(x) = C_{p,\sigma,\alpha}, 
$$
where $C_{p,\sigma,\alpha}$ is given by \eqref{A}. 
\end{theorem}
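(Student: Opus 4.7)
The plan is to transplant the singularity at infinity to an isolated singularity at the origin via the Kelvin transform and then invoke Theorem \ref{AB-T1}. I set
\[
v(x) := |x|^{-(n-2\sigma)}\, u\!\left(\tfrac{x}{|x|^2}\right), \qquad x \in B_1 \setminus \{0\}.
\]
Using the standard conformal covariance $(-\Delta)^\sigma v(x) = |x|^{-(n+2\sigma)}\bigl[(-\Delta)^\sigma u\bigr](x/|x|^2)$, the equation satisfied by $u$ transfers to
\[
(-\Delta)^\sigma v = |x|^{\tilde\alpha}\, v^p \quad \text{in } B_1 \setminus \{0\}, \qquad \tilde\alpha := p(n-2\sigma) - n - 2\sigma - \alpha,
\]
and the regularity $v \in C^2(B_1 \setminus \{0\}) \cap \mathcal{L}_\sigma(\mathbb{R}^n)$ follows from the corresponding properties of $u$ by an elementary change of variables.

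Next I verify that $(\tilde\alpha, p)$ satisfies the hypotheses of Theorem \ref{AB-T1}. A routine manipulation shows that $\frac{n+\alpha}{n-2\sigma} < p$ is equivalent to $\tilde\alpha > -2\sigma$, that $p \leq \frac{n+2\sigma+\alpha}{n-2\sigma}$ is equivalent to $\tilde\alpha \leq 0$, and then the resulting inequalities $\frac{n+\tilde\alpha}{n-2\sigma} < p \leq \frac{n+2\sigma+\tilde\alpha}{n-2\sigma}$ hold automatically. The Hardy-Sobolev exception $p = \frac{n+2\sigma+2\tilde\alpha}{n-2\sigma}$ is exactly $p = \frac{n+2\sigma+2\alpha}{n-2\sigma}$, which is excluded by hypothesis. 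Consequently Theorem \ref{AB-T1} applies to $v$.

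Two cases arise. If the singularity of $v$ at $0$ is removable, then $v$ extends continuously at $0$ with $\beta := v(0) \geq 0$; a strong maximum principle / Harnack argument for $(-\Delta)^\sigma$ upgrades this to $\beta > 0$ (otherwise $v \equiv 0$ in $B_1$, contradicting $u > 0$ on $|y|>1$). Inverting the Kelvin transform yields $|y|^{n-2\sigma} u(y) = v(y/|y|^2) \to \beta$ as $|y| \to \infty$. Otherwise $|x|^{(2\sigma+\tilde\alpha)/(p-1)} v(x) \to C_{p,\sigma,\tilde\alpha}$ as $x \to 0$, and the algebraic identity $4\sigma + \alpha + \tilde\alpha = (n-2\sigma)(p-1)$ gives both
\[
\frac{2\sigma+\tilde\alpha}{p-1} - (n-2\sigma) = -\frac{2\sigma+\alpha}{p-1}
\]
and, for $\tau_\alpha := \frac{n-2\sigma}{2} - \frac{2\sigma+\alpha}{p-1}$, the symmetry relation $\tau_{\tilde\alpha} = -\tau_\alpha$. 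Combined with the evenness $\Lambda(-\tau) = \Lambda(\tau)$ visible from \eqref{AAA}, this yields $C_{p,\sigma,\tilde\alpha} = C_{p,\sigma,\alpha}$ and hence $|y|^{(2\sigma+\alpha)/(p-1)} u(y) \to C_{p,\sigma,\alpha}$ as $|y|\to\infty$.

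The main obstacle is the bookkeeping: checking that the Kelvin-transformed parameters really lie in the range covered by Theorem \ref{AB-T1}, reconciling the two leading constants via the symmetry of $\Lambda$, and carefully justifying the strong maximum principle step that upgrades $\beta \geq 0$ to $\beta > 0$, since $v$ is only known to be nonnegative on $B_1$ rather than on all of $\mathbb{R}^n$.
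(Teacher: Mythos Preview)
Your proposal is correct and follows essentially the same route as the paper: apply the Kelvin transform, check that the new exponent $\tilde\alpha$ (the paper's $\varrho$) lands in the range of Theorem~\ref{AB-T1}, invoke that theorem, and then undo the transform using the evenness of $\Lambda$ to identify $C_{p,\sigma,\tilde\alpha}=C_{p,\sigma,\alpha}$. Your explicit remark about upgrading $\beta\geq 0$ to $\beta>0$ via a Harnack/strong maximum principle argument is a point the paper passes over silently, so your treatment is, if anything, slightly more careful.
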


In particular, we give a complete classification of isolated singularities of positive solutions to the fractional Lane-Emden equation near $\infty$ .  

\begin{corollary}\label{L-E19}
Assume  $n \geq 2$.  Let $u \in \mathcal{L}_\sg(\mathbb{R}^n)$  be a positive $C^2$ solution of 
\be\label{LM2019}
(-\Delta)^\sg u =u^p~~~~~~~~~~~~~ \textmd{in} ~ \{x \in \mathbb{R}^n  : |x| > 1\} 
\ee
with   $\frac{n}{n-2\sg} < p  <  \frac{n+2\sg }{n-2\sg}$.  Then either there exists $\beta >0$ such that 
\be\label{Fas}
\lim_{|x| \to  \infty} |x|^{n-2\sg}u(x) = \beta, 
\ee
or 
\be\label{Slow}
\lim_{|x| \to  \infty} |x|^{\frac{2\sg}{p-1}} u(x) = C_{p,\sigma,0}, 
\ee
where $C_{p,\sigma,0}$ is given by \eqref{A}. 
\end{corollary}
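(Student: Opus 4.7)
The plan is to obtain Corollary \ref{L-E19} as an immediate specialization of Theorem \ref{C-T4} to the case $\alpha=0$. The first step is to verify that the hypotheses of Theorem \ref{C-T4} are satisfied under the hypotheses of the corollary. Setting $\alpha=0$, the restriction $-2\sigma<\alpha\le 0$ is trivial, the range $\frac{n+\alpha}{n-2\sigma}<p\le \frac{n+2\sigma+\alpha}{n-2\sigma}$ becomes $\frac{n}{n-2\sigma}<p\le \frac{n+2\sigma}{n-2\sigma}$, and the exclusion $p\neq \frac{n+2\sigma+2\alpha}{n-2\sigma}$ becomes $p\neq \frac{n+2\sigma}{n-2\sigma}$. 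Combined with the strict upper bound in the corollary, this is precisely the range $\frac{n}{n-2\sigma}<p<\frac{n+2\sigma}{n-2\sigma}$ assumed in the statement, so Theorem \ref{C-T4} applies.

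The second step is to read off the two alternatives. Theorem \ref{C-T4} asserts that either $|x|^{n-2\sigma}u(x)\to \beta$ for some $\beta>0$, which is exactly \eqref{Fas}, or
\[
\lim_{|x|\to\infty}|x|^{(2\sigma+\alpha)/(p-1)}u(x)=C_{p,\sigma,\alpha}.
\]
Substituting $\alpha=0$ on both sides yields $|x|^{2\sigma/(p-1)}u(x)\to C_{p,\sigma,0}$, i.e.\ \eqref{Slow}, with the constant $C_{p,\sigma,0}$ given by formula \eqref{A} and $\Lambda$ by \eqref{AAA}. No further identification is required because \eqref{A}--\eqref{AAA} depend continuously on $\alpha$ and are already specialized to $\alpha=0$.

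Because the statement is a direct specialization, there is no genuine obstacle to overcome here: the analytic content (the monotonicity formula at infinity, the blow-down argument, the Kelvin transform linking singularities at $\infty$ to singularities at the origin, and the uniqueness of solutions of the associated degenerate equation on $\mathbb{S}^n_+$) is entirely carried out within the proof of Theorem \ref{C-T4}. The only thing the corollary records is the clean two-alternative classification one obtains in the pure fractional Lane-Emden case $\alpha=0$ in the full subcritical range above the Serrin exponent $\frac{n}{n-2\sigma}$.
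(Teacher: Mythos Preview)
Your proposal is correct and matches the paper's approach: the corollary is stated without a separate proof in the paper, since it is an immediate specialization of Theorem~\ref{C-T4} to $\alpha=0$, exactly as you describe.
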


\begin{remark}\label{AW18}
Our characterization of  isolated singularities near $\infty$ of the fractional  Lane-Emden  equation  is complemented by the existence results
of fast-decay solutions satisfying  \eqref{Fas}  which have  been recently  constructed  by Ao-Chan-DelaTorre-Fontelos-Gonz\'alez-Wei \cite{A-WCV, A-W2018}. 
More precisely, for some exponent $p_1=p_1(n, \sg) \in (\frac{n}{n-2\sg}, \frac{n+2\sg}{n-2\sg})$,  and for every $\beta \in(0, \infty)$, there exists a positive solution of \eqref{LM2019}  satisfying  \eqref{Fas} which was  proved in \cite{A-WCV}  when $ \frac{n}{n-2\sg}< p< p_1$ and  in \cite{A-W2018} when $p_1\leq p < \frac{n+2\sg}{n-2\sg}$. 
\end{remark}

Finally, we establish an uniqueness theorem for global singular solutions.  

\begin{theorem}\label{unique-2020}
Assume  $n \geq 2$. Let $U  \in W_{loc}^{1,2}(t^{1-2\sg}, \overline{\mathbb{R}^{n+1}_+} \backslash \{0\})$ be a nonnegative weak solution of 
\begin{equation}\label{Un-2020-01}
\begin{cases}
-\textmd{div}(t^{1-2\sg}  \nabla U)=0~~~~~~~~~& \textmd{in} ~ \mathbb{R}^{n+1}_+,\\
\frac{\partial U}{\partial \nu^\sg}(x, 0)=\kappa_\sg |x|^\al U^p(x, 0)~~~~~~~~& \textmd{on} ~ \partial^0\mathbb{R}^{n+1}_+  \backslash  \{0, \infty\}
\end{cases} 
\end{equation} 
with $-2 \sigma< \alpha \leq 0 $, $\frac{n+\alpha}{n-2\sigma} < p \leq  \frac{n+2\sigma+\alpha}{n-2\sigma}$ and $p \neq \frac{n+ 2\sg +2\al}{n- 2\sg}$.  Assume that the  two isolated singularities of the trace $u(x)$ of $U(x, t)$ at $x=0$ and $x=\infty$ are non-removable. Then necessarily we have 
\begin{equation}\label{Un-2020-02}
U(x, t)=   C_{p,\sigma,\alpha} \int_{\mathbb{R}^n} P_\sigma(x-y, t) |y|^{- \frac{2\sigma +\alpha}{p-1}} dy, 
\end{equation}
where the constant $C_{p,\sigma,\alpha}$ is given by \eqref{A}-\eqref{AAA} and $P_\sigma(x, t)$ is the Poisson kernel.  
\end{theorem}

\vskip0.10in

The rest of  this  paper is organized as follows. In Section 2,  we introduce the extension formulation  for $(-\Delta)^\sg$  established by  Caffarelli-Silvestre \cite{C-S} and  provide   some  a priori  estimates.   In Section 3, we establish an important monotonicity formula and prove Theorems \ref{C-T2} and  \ref{C-T2-2020}. In Section 4,  we show the precise asymptotic behavior of singular  solutions stated in Theorems \ref{AB-T1} and \ref{AB-T1-2020} and also give the proof of Theorem \ref{unique-2020}.     In Section 5,  we prove Theorems \ref{C-T3} and \ref{C-T4}.

\section{Preliminaries}
In this section, we introduce some notations and prove  some important estimates  which will be used in this paper.  

\vskip0.10in 

We use capital letters, such as $X=(x, t)\in \mathbb{R}^n \times \mathbb{R}_+$,  to  denote  points in $\mathbb{R}_+^{n+1}$.  We denote $\mathcal{B}_R$ as the ball in $\mathbb{R}^{n+1}$ with radius $R$ and center 0,  and $B_R$ as the ball in $\mathbb{R}^n$ with radius $R$ and center $0$. We also denote $\mathcal{B}_R^+$ as the upper half-ball $\mathcal{B}_R\cap \mathbb{R}_+^{n+1}$,  $\partial^+\mathcal{B}_R^+=\partial \mathcal{B}_R^+ \cap \mathbb{R}_+^{n+1}$ as the positive part of   $\partial \mathcal{B}_R^+$,  and $\partial^0 \mathcal{B}_R^+$ as the flat part of $\partial \mathcal{B}_R^+$ which is the ball $B_R$ in $\mathbb{R}^n$.  For a more general domain $\Omega \subset \mathbb{R}^{n+1}_+$, we  denote $\partial^0 \Omega$ as the interior of $\overline{\Omega} \cap \partial \mathbb{R}^{n+1}_+$ in $\mathbb{R}^n$. 

\vskip0.10in

As mentioned before,  we will study the fractional Hardy-H\'enon equation \eqref{Iso1} via the well known extension theorem for the fractional Laplacian $(-\Delta)^\sg$ established by Caffarelli-Silvestre \cite{C-S}.  Assume $u \in C^2(B_1 \backslash  \{0\}) \cap \mathcal{L}_\sg (\mathbb{R}^n)$. For $X=(x, t) \in \mathbb{R}_+^{n+1}$, let 
\begin{equation}\label{Extension}
U(x, t)=\int_{\mathbb{R}^n} P_\sg (x-y, t) u(y) dy,
\end{equation}
where 
\begin{equation}\label{Poss095}
P_\sg(x, t)=p_{n, \sg}\frac{t^{2\sg}}{(|x|^2 + t^2)^{\frac{n+2\sg}{2}}}
\end{equation} 
and $p_{n, \sg}$ is a positive constant chosen such that $\int_{\mathbb{R}^n} P_\sg(x, 1)dx=1$.  Then  $U\in C^2 (\mathbb{R}_+^{n+1}) \cap C 
\left( (\mathcal{B}_1^+ \cup \partial^0 \mathcal{B}_1^+) \backslash  \{0\} \right)$,  $t^{1-2\sg} \partial_t U \in C 
\left( (\mathcal{B}_1^+ \cup \partial^0 \mathcal{B}_1^+) \backslash  \{0\} \right)$ and 
\begin{equation}\label{C-S-E00}
\begin{cases}
-\textmd{div}(t^{1-2\sg}  \nabla U)=0~~~~~~~~~& \textmd{in} ~ \mathbb{R}_+^{n+1},\\
U(x, 0)=u(x)~~~~~~~~& \textmd{on} ~ \partial^0\mathcal{B}_1^+ \backslash  \{0\}.\\
\end{cases}
\end{equation}
By the extension formulation in \cite{C-S},  we have 
\begin{equation}\label{C-S-E022}
\frac{\partial U}{\partial \nu^\sg}(x, 0)  = \kappa_\sg (-\Delta)^\sg u(x) ~~~~~~~~ \textmd{on} ~ \partial^0\mathcal{B}_1^+ \backslash  \{0\}, 
\end{equation}
where $\frac{\partial U}{\partial \nu^\sg}(x, 0) := -\lim_{t\rightarrow 0^+}t^{1-2\sg} \partial_t U(x, t)$, the constant $\kappa_\sg=\frac{\Gamma(1-\sg)}{2^{2\sg-1} \Gamma(\sg)}$ and $\Gamma$ is the Gamma function. 

\vskip0.1in
 
Instead of Eq.  \eqref{Iso1} we may study the following degenerate elliptic equation with an isolated  Neumann boundary singularity  
\begin{equation}\label{Iso3} 
\begin{cases}
-\textmd{div}(t^{1-2\sg}  \nabla U)=0~~~~~~~~~& \textmd{in} ~ \mathcal{B}_1^+,\\
\frac{\partial U}{\partial \nu^\sg}(x, 0)=\kappa_\sg |x|^\al U^p(x, 0)~~~~~~~~& \textmd{on} ~ \partial^0\mathcal{B}_1^+ \backslash  \{0\}. 
\end{cases}
\end{equation}
By \eqref{C-S-E00} and \eqref{C-S-E022}, the asymptotic behavior of  solutions  near the singularity of  \eqref{Iso1} can be obtained  from that of the traces $u(x):=U(x, 0)$ of  the solutions $U(x,t)$ of \eqref{Iso3}.  

\vskip0.1in 

We recall  that $U$ is a  nonnegative weak solution of \eqref{Iso3} if $U$ is in the weighted Sobolev space $W^{1, 2}(t^{1-2\sg}, \mathcal{B}^+_1 \backslash \overline{\mathcal{B}}^+_\epsilon)$ for every $\epsilon > 0$,  $U \geq 0$, and it  satisfies  \eqref{Iso3} in the sense of distributions  away from 0,  that is,    
\begin{equation}\label{Solu}
\int_{\mathcal{B}_1^+} t^{1-2\sg} \nabla U \cdot\nabla \Phi= \kappa_\sg \int_{\partial^0\mathcal{B}_1^+} |x|^\al U^p \Phi 
\end{equation}
for every  nonnegative  $\Phi \in C_c^\infty\left((\mathcal{B}_1^+  \cup \partial^0\mathcal{B}_1^+) \backslash  \{0\}\right)$.

\vskip0.10in

We say that the origin $0$  is a removable singularity of solution $U$ of \eqref{Iso3} if $U(x,t)$ can be extended as a continuous function near the origin, otherwise we say that the origin $0$  is a non-removable singularity.   

\vskip0.10in

We say $U \in W_{loc}^{1,2}(t^{1-2\sg}, \overline{\mathbb{R}^{n+1}_+})$ if $U \in W^{1,2}(t^{1-2\sg}, \mathcal{B}_R^+)$ for all $R>0$, and we say $U \in W_{loc}^{1,2}(t^{1-2\sg}, \overline{\mathbb{R}^{n+1}_+} \backslash \{0\})$ if $U \in W^{1,2}(t^{1-2\sg}, \mathcal{B}_R^+ \backslash \overline{\mathcal{B}}_\epsilon^+)$ for all $R > \epsilon >0$.

\vskip0.10in

We now establish the basic singularity and decay estimates.  In the case $\sg =1$, that is for the Laplacian, the corresponding results  were  proved in \cite{G-S,Soup}.   
\begin{proposition}\label{SDE}
Let $n \geq 2$,  $\al \in \mathbb{R}$ and $1 < p < \frac{n+2\sg}{n-2\sg}$. 

\begin{itemize}

\item [(1)] Suppose that $U$ is a nonnegative weak solution of \eqref{Iso3}.  Then there exists a constant $C=C(n,p,\al,\sg)$ such that
\be\label{1se01}
U(x, 0) \leq  \frac{C}{|x|^{(2\sg +\al) / (p-1)}},  ~~~~~~~~ 0 < |x| < \frac{1}{2}. 
\ee

\item [(2)] Suppose that $U$ is a nonnegative weak solution of 
\begin{equation}\label{Iso3E}
\begin{cases}
-\textmd{div}(t^{1-2\sg}  \nabla U)=0~~~~~~~~~& \textmd{in} ~ \mathbb{R}_{+}^{n+1} \backslash \overline{\mathcal{B}_1^+},\\
\frac{\partial U}{\partial \nu^\sg}(x, 0)=\kappa_\sg |x|^\al U^p(x, 0)~~~~~~~~& \textmd{on} ~ B_1^c, 
\end{cases}
\end{equation}
where $B_1^c:=\{x \in \mathbb{R}^n : |x| >1 \}$. Then there exists a constant $C=C(n,p,\al,\sg)$ such that
\be\label{1de01}
U(x,0) \leq \frac{C}{|x|^{(2\sg +\al) / (p-1)}},  ~~~~~~~~ |x| > 2. 
\ee
\end{itemize}
\end{proposition}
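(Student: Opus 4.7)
The plan is to reduce part (2) to part (1) by a Kelvin-type transform, and to prove part (1) by a two-step blow-up argument combining the doubling lemma of Pol\'a\v{c}ik-Quittner-Souplet with the subcritical fractional Liouville theorem. The first step will be an \emph{inner rescaling} designed to normalise the Hardy-H\'enon weight. For $y\in B_{1/2}\setminus\{0\}$ I would set $\lambda=|y|/2$ and
$$V(z,s):=\lambda^{(2\sigma+\alpha)/(p-1)}\,U(y+\lambda z,\lambda s),\qquad (z,s)\in\mathcal B_1^+.$$
A direct scaling computation (using the invariance of $-\operatorname{div}(t^{1-2\sigma}\nabla\cdot)$ and the identity $\lambda^{-\alpha}|y+\lambda z|^{\alpha}=|2\hat y+z|^{\alpha}$ with $\hat y=y/|y|$) shows that $V$ is a nonnegative weak solution of
$$-\operatorname{div}(s^{1-2\sigma}\nabla V)=0\ \text{ in }\mathcal B_1^+,\qquad \tfrac{\partial V}{\partial\nu^\sigma}=\kappa_\sigma\,|2\hat y+z|^{\alpha}\,V^p\ \text{ on }\partial^0\mathcal B_1^+.$$
Since $|2\hat y+z|\in[1,3]$ on $\overline{\mathcal B}_1^+$, the boundary coefficient is pinned between two positive constants depending only on $\alpha$. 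Estimate \eqref{1se01} at $y$ is equivalent to $V(0,0)\leq C(n,p,\alpha,\sigma)$, so it suffices to prove such a universal bound for every nonnegative weak solution of this rescaled problem.

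To prove the universal bound I argue by contradiction. Assuming a sequence $V_k$ of such solutions (with unit vectors $\hat y_k$) satisfying $V_k(0,0)\to\infty$, the doubling lemma of \cite{P-Q-S} applied to $M_k(z):=V_k(z,0)^{(p-1)/(2\sigma)}$ on $B_{3/4}$ will produce points $w_k\in B_{3/4}$ with $M_k(w_k)\to\infty$, $\rho_k M_k(w_k)\to\infty$, and $M_k\leq 2 M_k(w_k)$ on $B_{\rho_k}(w_k)$. A second rescaling
$$W_k(\zeta,\eta):=V_k(w_k,0)^{-1}V_k(w_k+\nu_k\zeta,\nu_k\eta),\qquad \nu_k:=V_k(w_k,0)^{-(p-1)/(2\sigma)},$$
yields $W_k(0,0)=1$, a trace bound $W_k(\zeta,0)\leq 2^{2\sigma/(p-1)}$ on $B_{\rho_k M_k(w_k)}$, and boundary coefficient $\kappa_\sigma|2\hat y_k+w_k+\nu_k\zeta|^{\alpha}$ which, along a subsequence with $\hat y_k\to\hat y_\infty$ and $w_k\to w_\infty$, converges locally uniformly to a positive constant $c$. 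Using the interior H\"older regularity of \cite{Cab-S,J-L-X} for the degenerate operator together with uniform $\mathcal{L}_\sigma$-tail control of the traces, I would extract a locally uniform limit $W_\infty$ on $\overline{\mathbb R^{n+1}_+}$ satisfying $-\operatorname{div}(\eta^{1-2\sigma}\nabla W_\infty)=0$ in $\mathbb R^{n+1}_+$, $\partial W_\infty/\partial\nu^\sigma=cW_\infty^p$ on $\partial\mathbb R^{n+1}_+$, and $W_\infty(0,0)=1$. Equivalently, $w_\infty:=W_\infty(\cdot,0)$ is a nontrivial nonnegative weak solution of $(-\Delta)^\sigma w_\infty=(c/\kappa_\sigma)w_\infty^p$ on $\mathbb R^n$ with $1<p<(n+2\sigma)/(n-2\sigma)$, contradicting the classical subcritical Liouville theorem of Chen-Li-Ou.

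For part (2), I would apply the Kelvin-type transform $\hat u(y):=|y|^{-(n-2\sigma)}u(y/|y|^2)$, whose Caffarelli-Silvestre extension $\hat U$ is a nonnegative weak solution on $\mathcal B_1^+$ of the interior problem with exponent $\hat\alpha:=p(n-2\sigma)-(n+2\sigma+\alpha)$ in place of $\alpha$. Invoking part (1) for $\hat U$ gives $\hat u(y)\leq C|y|^{-(2\sigma+\hat\alpha)/(p-1)}$ for $0<|y|<1/2$; an elementary computation of the exponent via $u(x)=|x|^{-(n-2\sigma)}\hat u(x/|x|^2)$ then collapses to precisely \eqref{1de01} for $|x|>2$.

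The main obstacle I anticipate is the compactness step in the second blow-up: passing from the doubling rescalings $W_k$ to a limit $W_\infty$ that actually solves the global degenerate equation on $\mathbb R^{n+1}_+$, rather than a truncated or punctured version. This will require both the interior H\"older estimate for divergence-form equations with the $A_2$-weight $\eta^{1-2\sigma}$ and a careful uniform tail control of the traces in $\mathcal{L}_\sigma(\mathbb R^n)$, so that the pointwise limit genuinely represents the Caffarelli-Silvestre extension of its own trace; only then may one invoke the fractional Liouville theorem on $\mathbb R^n$ to close the argument.
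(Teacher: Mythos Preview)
Your approach to part~(1) is essentially the paper's: the same inner rescaling $V(z,s)=\lambda^{(2\sigma+\alpha)/(p-1)}U(y+\lambda z,\lambda s)$ with $\lambda=|y|/2$ is used to reduce to a problem with a bounded positive $C^1$ coefficient, and the universal bound is then obtained by the identical doubling/blow-up/Liouville scheme (the paper isolates this step as a separate lemma for general $K\in C^1(\overline{B_1})$, but the content is the same).

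For part~(2) there is a genuine difference. You reduce to (1) via the Kelvin transform, which is correct and the exponent computation collapses as you say. The paper instead observes that the \emph{same} inner rescaling works directly in the exterior region: for $|x_0|>2$ and $\lambda=|x_0|/2$ one still has $\{x_0+\lambda z:|z|<1\}\subset\{|x|>1\}$, so the auxiliary lemma applies verbatim at $z=0$. This treats (1) and (2) uniformly in one line, whereas your Kelvin route is valid but slightly longer; note also that what you should transform is $U$ itself in $\mathbb{R}^{n+1}_+$ via $\widehat U(Y)=|Y|^{-(n-2\sigma)}U(Y/|Y|^2)$, not the Caffarelli--Silvestre extension of $\hat u$ (which is not a~priori defined since $\hat u$ lives only on the punctured ball).

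Your anticipated obstacle evaporates if, as the paper does, you invoke the Liouville theorem for the \emph{extension problem} on $\mathbb{R}^{n+1}_+$ (Remark~1.9 of Jin--Li--Xiong) rather than Chen--Li--Ou on $\mathbb{R}^n$. The limit $W_\infty$ is obtained as a weak-$W^{1,2}(t^{1-2\sigma})$ and locally $C^\gamma$ limit of solutions of the degenerate half-space equation, so it solves that equation directly; no $\mathcal{L}_\sigma$ tail control and no identification with a Poisson extension are needed. The compactness step then requires only the local energy and H\"older bounds that follow from the uniform trace bound furnished by the doubling lemma.
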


To prove Proposition \ref{SDE},  we need the following lemma.

\bl\label{LM-01} 
Let $n \geq 2$ and $1 < p < \frac{n+2\sg}{n-2\sg}$. Let $K \in C^1(\overline{B_1})$ satisfy
\begin{equation}\label{K-01}
\|K\|_{C^1(\overline{B_1})} \leq C_1 ~~~~~~ \textmd{and}  ~~~~~~ K(x) \geq C_2,~~ x \in \overline{B_1}, 
\end{equation}
for some  constants $C_1, C_2 >0$. Suppose that $U$ is a nonnegative weak solution of 
\begin{equation}\label{K90}
\begin{cases}
-\textmd{div}(t^{1-2\sg}  \nabla U)=0~~~~~~~~~& \textmd{in} ~ \mathcal{B}_1^+,\\
\frac{\partial U}{\partial \nu^\sg}(x, 0)=K(x) U^p(x, 0)~~~~~~~~& \textmd{on} ~ \partial^0\mathcal{B}_1^+. 
\end{cases}
\end{equation}
Then there exists a constant $C$, depending only on $n, \sg, \gamma, p, C_1, C_2$, such that 
$$
U(x,0) \leq C  \left[  \textmd{dist}(x, \partial B_1)  \right]^{-\frac{2\sg}{p-1}} ,~~~~~~~~ x\in B_1. 
$$
\el

\bp
Suppose by contradiction that there exists a sequence of solutions $U_i$ of \eqref{K90} and a sequence of points $x_i \in B_1$ such that
$$
M_i(x_i) \textmd{dist}(x_i, \partial B_1) >  2 i,~~~~~~~ i=1,2,\cdots, 
$$
where the functions $M_i$ are defined by
$$
M_i(x) = ( U_i(x ,0) )^{\frac{p-1}{2\sg}},~~~~~~~ x \in B_1. 
$$
By the doubling lemma of Pol\'{a}\v{c}ik-Quittner-Souplet \cite{P-Q-S},  there exists another sequence $y_i \in B_1$ such that
\begin{equation}\label{PQS78}
M_i(y_i) \textmd{dist}(y_i, \partial B_1) > 2i, ~~~~~~~ M_i(y_i) \geq M_i(x_i)
\end{equation}
and
\begin{equation}\label{PQS39}
M_i(z) \leq 2 M_i(y_i)~~~~~~~~ \textmd{for} ~ \textmd{any} ~ |z - y_i| \leq i \lambda_i, 
\end{equation}
where $\lambda_i:=M_i(y_i)^{-1}$. Note that $\lambda_i \to 0$ as $i\to \infty$.  We now define 
$$
\bar{U}_i(x, t) = \lambda_i^{\frac{2\sg}{p-1}} U_i(y_i + \lambda_i x, \lambda_i t),~~~~~~~ (x, t)\in \Omega_i 
$$
with 
$$
\Omega_i =\left\{(x, t)\in \mathbb{R}^{n+1}_+  :    (y_i + \lambda_i x, \lambda_i t)  \in \mathcal{B}_1^+ \backslash \{0\}\right\}.
$$
Then $\bar{U}_i$ satisfies $\bar{U}_i(0)=1$ and
\begin{equation}\label{Upp04}
\begin{cases}
-\textmd{div}(t^{1-2\sg}  \nabla \bar{U}_i)=0~~~~~~~~~& \textmd{in} ~\Omega_i,\\
\frac{\partial \bar{U}_i}{\partial \nu^\sg}(x, 0)=\bar{K}_i(x)  \bar{U}_i(x, 0)^p~~~~~~~~& \textmd{on} ~\partial^0\Omega_i, 
\end{cases}
\end{equation}
where $\bar{K}_i(x) = K(y_i + \lambda_i x)$ for $x \in \partial^0\Omega_i$. Moreover, by \eqref{PQS39}  we have 
$$
\bar{U}_i(x, 0) \leq 2^{\frac{2\sg}{p-1}},~~~~~~~~~ x \in B_i(0) \subset \mathbb{R}^n. 
$$

On the other hand, by \eqref{K-01} we  know  that $C_2 \leq \bar{K}_i(x) \leq C_1$ and, for each $R >0$ and $i \geq i_0(R)$ large enough,
\begin{equation}\label{K-030}
\|\bar{K}_i\|_{C^1(\overline{B_R})} \leq C_1
\end{equation}
and 
\begin{equation}\label{K-08}
|\bar{K}_i(y) - \bar{K}_i(z)| \leq C_1|\lambda_i(y - z)| \leq C_1|y - z|,~~~~~ y, z \in B_R. 
\end{equation}
Therefore, by Arzela-Ascoli's theorem, there exists $\bar{K} \in C(\mathbb{R}^n)$ such that, after extracting a subsequence, $\bar{K}_i \to \bar{K}$ in $C_{loc}(\mathbb{R}^n)$. Moreover, from  \eqref{K-08} we have for any $y , z \in \mathbb{R}^n$ that 
$$
|\bar{K}_i(y) - \bar{K}_i(z)|  \to 0 ~~~~~~~ \textmd{as} ~ i \to \infty, 
$$
and hence the function $\bar{K}$ is actually a constant $K_0 >0$.   

\vskip0.10in

It follows from Corollary 2.10  and  Theorem 2.15 of Jin-Li-Xiong \cite{J-L-X} that there exists $\gamma \in (0, 1)$ such that for every $R>1$,
$$
\|\bar{U}_i\|_{W^{1,2}(t^{1-2\sg}, \mathcal{B}_R^+)} + \|\bar{U}_i\|_{C^\gamma(\overline{\mathcal{B}}_R^+)} \leq C(R),
$$
where $C(R)$ is independent of $i$.  Thus,  there is a subsequence of $i \rightarrow \infty$, still denoted by itself,  and a  function $\bar{U}\in W_{loc}^{1,2}(t^{1-2\sg}, \overline{\mathbb{R}^{n+1}_+}) \cap C_{loc}^\gamma(\overline{\mathbb{R}^{n+1}_+})$ such that as $ i \rightarrow \infty$,
$$
\begin{cases}
\bar{U}_i \rightharpoonup \bar{U}~~~~~~~\textmd{weakly} ~ \textmd{in} ~~~~~~~~~~&W_{loc}^{1,2}(t^{1-2\sg}, \overline{\mathbb{R}^{n+1}_+})  ,\\
\bar{U}_i \rightarrow \bar{U}~~~~~~~  \textmd{in}~~~~~~~&C_{loc}^{\gamma/2}(\overline{\mathbb{R}^{n+1}_+}). 
\end{cases}
$$
Moreover, $\bar{U}$ is a  nonnegative solution of 
\begin{equation}\label{Upp051}
\begin{cases}
-\textmd{div}(t^{1-2\sg}  \nabla \bar{U})=0~~~~~~~~~& \textmd{in} ~\mathbb{R}^{n+1}_+,\\
\frac{\partial \bar{U}}{\partial \nu^\sg}(x, 0)=K_0 \bar{U}^p(x, 0)~~~~~~~~& \textmd{on} ~\mathbb{R}^n,
\end{cases}
\end{equation}
and $\bar{U}(0)=1$. Since $p < \frac{n+2\sg}{n-2\sg}$, this contradicts  the Liouville type theorem in \cite{J-L-X} (See Theorem 1.8 and Remark 1.9  in  \cite{J-L-X} ).    
\ep

\vskip0.10in

\noindent{\it Proof of Proposition \ref{SDE}. }
Suppose either $\Omega =\{x \in \mathbb{R}^n : 0 < |x| < 1\}$ and $0 < |x_0| < \frac{1}{2}$, or $\Omega =\{x \in \mathbb{R}^n :  |x| > 1\}$ and $ |x_0| > 2$.    Take 
$$
\lambda=\frac{|x_0|}{2}.
$$
Then, for any $y \in B_1$, we have $\frac{|x_0|}{2} < |x_0 + \lambda y| < \frac{3|x_0|}{2}$. Hence $x_0 + \lambda y \in \Omega$ in either case. Define 
$$
W(y, t)= \lambda^{\frac{2\sg + \al}{p-1}} U(x_0 + \lambda y, \lambda t). 
$$
Then $W$ is a nonnegative solution of 
$$
\begin{cases}
-\textmd{div}(t^{1-2\sg}  \nabla W)=0~~~~~~~~~& \textmd{in} ~ \mathcal{B}_1^+,\\
\frac{\partial W}{\partial \nu^\sg}(y, 0)=K(y) W^p(y, 0)~~~~~~~~& \textmd{on} ~ \partial^0\mathcal{B}_1^+, 
\end{cases}
$$
where $K(y)=|y + \frac{x_0}{\lambda}|^\al$ for $y \in B_1$. Clearly 
$$
1 \leq |y + \frac{x_0}{\lambda}| \leq 3 ~~~~~~~~~ \textmd{for} ~  \textmd{all} ~  y \in \overline{B_1}. 
$$
Therefore,  $\|K\|_{C^1(\overline{B_1})} \leq C_1(\al)$ and $K(y) \geq C_2(\al)$ in $\overline{B_1}$ for some constants $C_1(\al), C_2(\al) >0$. By Lemma \ref{LM-01}  we obtain $ W(0) \leq C $. This implies that
$$
U(x_0, 0) \leq C \lambda^{-\frac{2\sg + \al}{p-1}} \leq C |x_0|^{-\frac{2\sg + \al}{p-1}}. 
$$
The desired conclusion follows. 
\hfill$\square$

\vskip0.10in

\begin{corollary}\label{2Co1}
Let $n \geq 2$ and $1 < p < \frac{n+2\sg}{n-2\sg}$.  Suppose that $U$ is a nonnegative weak solution of \eqref{Iso3}. If  $\al <-2\sg$, then $U(x) \equiv 0$ in $( \mathcal{B}_1^+ \cup \partial^0 \mathcal{B}_1^+) \backslash \{0\}$. 
\end{corollary}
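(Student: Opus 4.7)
The plan is to combine the upper estimate of Proposition~\ref{SDE}(1) with a Hopf boundary-point argument for the weighted extension operator $-\operatorname{div}(t^{1-2\sg}\nabla\,\cdot\,)$ at the singular point $(0,0)$. First I apply Proposition~\ref{SDE}(1) to get
\[
U(x,0)\le C\,|x|^{-(2\sg+\al)/(p-1)},\qquad 0<|x|<\tfrac{1}{2},
\]
with $C=C(n,p,\al,\sg)$. Under the hypothesis $\al<-2\sg$ the exponent $\tu:=-(2\sg+\al)/(p-1)$ is strictly positive, so $u:=U(\cdot,0)\to 0$ as $x\to 0$, and by the boundary H\"older regularity in \cite{Cab-S,J-L-X}, $U$ itself extends continuously up to $(0,0)$ with $U(0,0)=0$. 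Moreover $|x|^\al u^p\le C|x|^{(-\al-2\sg p)/(p-1)}\in L^1_{\loc}(B_{1/2})$, because $\al<-2\sg$ automatically forces $p>1>(n+\al)/(n-2\sg)$, making the exponent exceed $-n$. Together with $U\in W^{1,2}(t^{1-2\sg},\mathcal{B}_{1/2}^+)$ (from local boundedness of $u$), a standard cut-off argument — test \eqref{Solu} against $\Phi\,\eta_\e$ for a radial $\eta_\e$ vanishing on $\mathcal{B}_\e^+$ and let $\e\to 0^+$ — extends the weak formulation of \eqref{Iso3} to all test functions in $C_c^\infty(\mathcal{B}_{1/2}^+\cup\pa^0\mathcal{B}_{1/2}^+)$, including those that do not vanish at the origin.

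Next, suppose for contradiction $U\not\equiv 0$. The interior strong maximum principle for $-\operatorname{div}(t^{1-2\sg}\nabla U)=0$ then gives $U>0$ throughout the interior of $\mathcal{B}_{1/2}^+$, so $(0,0)$ is a strict boundary minimum of $U$. The Hopf boundary-point lemma for the weighted operator at $(0,0)$ yields
\[
\tfrac{\pa U}{\pa\nu^\sg}(0,0)\;:=\;-\liminf_{t\to 0^+}t^{1-2\sg}\pa_t U(0,t)\;>\;0.
\]
On the other hand, the extended Neumann condition $\tfrac{\pa U}{\pa\nu^\sg}(x,0)=\kappa_\sg|x|^\al u(x)^p$ combined with the upper bound on $u$ forces $\tfrac{\pa U}{\pa\nu^\sg}(x,0)\to 0$ as $x\to 0$ whenever $p<|\al|/(2\sg)$, contradicting Hopf. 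In the complementary regime $p\ge|\al|/(2\sg)$ the same conclusion is reached either by bootstrapping the decay of $u$ through the equation (iterating the upper bound against the fractional potential of $|x|^{\al+p\tu}$, whose exponent always satisfies $\ge-2\sg$) until $|x|^\al u^p\to 0$, or by pairing the extended weak form against a singular radial cut-off concentrated at $0$ and exploiting the integrability of $|x|^\al u^p$ to produce a quantitative contradiction. Either way $U\equiv 0$ in $\mathcal{B}_{1/2}^+$, and unique continuation for $-\operatorname{div}(t^{1-2\sg}\nabla\,\cdot\,)$ then propagates $U\equiv 0$ to all of $(\mathcal{B}_1^+\cup\pa^0\mathcal{B}_1^+)\backslash\{0\}$.

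The main obstacle is making the Hopf boundary-point lemma applicable at the singular corner $(0,0)$, where the Neumann datum formally degenerates, and ensuring that it genuinely vanishes there throughout the whole range $1<p<(n+2\sg)/(n-2\sg)$. In the subrange $p\ge|\al|/(2\sg)$ the crude Proposition~\ref{SDE} bound does not by itself drive $|x|^\al u^p\to 0$, so the bootstrap or integral-identity step is the delicate ingredient of the argument.
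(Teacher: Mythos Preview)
Your Hopf-lemma strategy has a genuine gap precisely in the range $p\ge |\al|/(2\sg)$, which you flag as ``delicate'' but do not actually resolve. The bootstrap you propose cannot improve the exponent: starting from $u(x)\le C|x|^{\tau_0}$ with $\tau_0=-(2\sg+\al)/(p-1)$, the Neumann datum obeys $|x|^\al u^p\le C|x|^{\al+p\tau_0}$, and any potential/regularity step returns at best $|u(x)|\le C|x|^{\al+p\tau_0+2\sg}$. But $\al+p\tau_0+2\sg=\tau_0$ identically (this is just the scaling invariance of the equation), so the iteration is stuck at its fixed point $\tau_0$ and you never drive $|x|^\al u^p$ to zero. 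The alternative ``singular radial cut-off'' route is left unspecified and does not obviously yield a contradiction either, since the integrability you have already established is compatible with $|x|^\al u^p$ blowing up like $|x|^{\al+p\tau_0}$ with $\al+p\tau_0\in(-2\sg,0]$. Even in the easy range $p<|\al|/(2\sg)$ you should be explicit that the contradiction with Hopf requires continuity of $t^{1-2\sg}\pa_t U$ up to $(0,0)$, not merely that $\frac{\pa U}{\pa\nu^\sg}(x,0)\to 0$ along the boundary; these are different limits.

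The paper's proof sidesteps all of this. After using Proposition~\ref{SDE}(1) to get $U(x,0)\to 0$ (the same first step as yours), it simply quotes Proposition~3.1 of \cite{J-L-X}: any $U>0$ satisfying the degenerate equation in $\mathcal{B}_1^+$ with nonnegative conormal derivative on $\pa^0\mathcal{B}_1^+\setminus\{0\}$ has $\liminf_{X\to 0}U(X)>0$. This Harnack/maximum-principle statement needs only the \emph{sign} of the Neumann data, not its pointwise behaviour at the origin, so the problematic subrange never appears. In short, the paper treats the origin as an unreachable minimum for positive supersolutions, whereas you try to force a pointwise Neumann identity at the origin and then contradict Hopf; the latter demands exactly the extra decay that the scaling of the equation refuses to give beyond $\tau_0$.
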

\bp
Since $\al <-2\sg$,  by \eqref{1se01}  we know 
\be\label{as0}
U(x, 0) \to 0 ~~~~~~~~~~ \textmd{as} ~ x \to 0.
\ee
Assume by contradiction that there exists $X_0 \in ( \mathcal{B}_1^+ \cup \partial^0 \mathcal{B}_1^+) \backslash \{0\}$ such that $U(X_0) > 0$. Then the maximum principle implies that 
$$ 
U(X) >0 ~~~~~~~~~~ \textmd{for} ~  \textmd{all} ~ X \in ( \mathcal{B}_1^+ \cup \partial^0 \mathcal{B}_1^+) \backslash \{0\}.
$$
By Proposition 3.1 in \cite{J-L-X}, we have
$$
\liminf_{X \to 0} U(X) >0, 
$$
a contradiction with \eqref{as0}.   
\ep

Now we recall a Harnack inequality. For its proof,  see  \cite{Cab-S,J-L-X}.
\begin{lemma}\label{HI01}
Let $U \in W^{1,2}(t^{1-2\sg}, \mathcal{B}^+_1)$ be a nonnegative weak solution of
\begin{equation}\label{P11}
\begin{cases}
-\textmd{div}(t^{1-2\sg}  \nabla U)=0~~~~~~~~~& \textmd{in} ~\mathcal{B}^+_1,\\
\frac{\partial U}{\partial \nu^\sg}(x, 0)=a(x)U(x, 0)~~~~~~~~& \textmd{on} ~\partial^0\mathcal{B}^+_1. 
\end{cases}
\end{equation}
If $a\in L^q(B_1)$ for some $q > \frac{n}{2\sg}$, then we have
\be\label{P21}
\sup_{\mathcal{B}^+_{1/2}} U \leq C  \inf_{\mathcal{B}^+_{1/2}} U,
\ee
where $C$ depends only on $n, \sg$ and $\|a\|_{L^q(B_1)}$. 
\end{lemma}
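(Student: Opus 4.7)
The plan is to adapt the weighted Moser iteration of Fabes--Kenig--Serapioni to the Neumann boundary condition $\partial U/\partial \nu^\sg = a(x)U$; this is essentially the argument carried out in Cabr\'e--Sire \cite{Cab-S} and Jin--Li--Xiong \cite{J-L-X}. The starting observation is that $t^{1-2\sg}$ is a Muckenhoupt $A_2$ weight on $\mathbb{R}^{n+1}$ (because $|1-2\sg|<1$), so $W^{1,2}(t^{1-2\sg},\mathcal{B}_1^+)$ enjoys the weighted Sobolev--Poincar\'e inequality and the trace embedding
\[
\Bigl(\int_{\partial^0 \mathcal{B}_1^+} |V|^{2n/(n-2\sg)}\Bigr)^{(n-2\sg)/n} \leq C \int_{\mathcal{B}_1^+} t^{1-2\sg}|\nabla V|^2,
\]
which is the key ingredient for controlling boundary integrals that arise in the energy identities.

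For the $L^\infty$ bound, fix $\beta \geq 1$, set $\bar U = U+\delta$ for $\delta>0$, and test \eqref{P11} with $\eta^2 \bar U^\beta$ for a cutoff $\eta \in C_c^\infty(\mathcal{B}_1^+ \cup \partial^0 \mathcal{B}_1^+)$. Using the boundary condition, one obtains an inequality of the form
\[
\int_{\mathcal{B}_1^+} t^{1-2\sg}\bigl|\nabla(\eta \bar U^{(\beta+1)/2})\bigr|^2 \leq C(\beta)\int_{\mathcal{B}_1^+} t^{1-2\sg}|\nabla \eta|^2 \bar U^{\beta+1} + C\int_{\partial^0 \mathcal{B}_1^+} |a|\eta^2 \bar U^{\beta+1}.
\]
Apply H\"older on the boundary term with exponents $(q,q')$ and use the trace embedding on $\|\eta \bar U^{(\beta+1)/2}\|_{L^{2q'}(\partial^0)}$. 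The assumption $q > n/(2\sg)$ yields $2q' < 2n/(n-2\sg)$, so interpolation combined with Young's inequality allows us to absorb the boundary term into the left-hand side. Standard Moser iteration then gives, for some small $p_0>0$,
\[
\sup_{\mathcal{B}_{1/2}^+} U \leq C\Bigl(\int_{\mathcal{B}_{3/4}^+} t^{1-2\sg} U^{p_0}\Bigr)^{1/p_0},
\]
and the analogous iteration with $\bar U^{-\beta}$ ($\beta>0$) produces the symmetric lower bound $\bigl(\int_{\mathcal{B}_{3/4}^+} t^{1-2\sg} U^{-p_0}\bigr)^{-1/p_0} \leq C \inf_{\mathcal{B}_{1/2}^+} U$.

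To bridge the two $L^{p_0}$ norms, I would test \eqref{P11} with $\eta^2/(U+\delta)$ to produce a Caccioppoli-type estimate for $\nabla \log U$, and then invoke the John--Nirenberg inequality in the $A_2$-weighted setting (available from Fabes--Kenig--Serapioni). This yields $\int t^{1-2\sg} U^{p_0} \cdot \int t^{1-2\sg} U^{-p_0} \leq C$ for $p_0>0$ sufficiently small, and concatenating with the two previous estimates produces the Harnack inequality with the claimed dependence of $C$ on $n,\sg$ and $\|a\|_{L^q(B_1)}$.

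The main obstacle is to make every constant in the Moser scheme depend only on $\|a\|_{L^q}$ and to control its growth in $\beta$ so that the iteration converges geometrically. The hypothesis $q > n/(2\sg)$ is borderline sharp here, since at $q = n/(2\sg)$ one has $2q/(q-1) = 2n/(n-2\sg)$ and the interpolation underlying the absorption just barely fails, leaving no room to absorb the boundary piece without a smallness hypothesis on $\|a\|_{L^q}$.
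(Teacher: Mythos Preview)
Your sketch is correct in spirit and in detail: this is precisely the weighted Moser iteration with boundary trace control carried out in \cite{Cab-S} and \cite{J-L-X}. Note, however, that the paper itself supplies no proof of this lemma at all; it simply quotes the result and refers the reader to those two sources, so there is nothing to compare against beyond the fact that you have reproduced the argument the cited references contain.
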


One very useful consequence of Proposition \ref{SDE} is the following Harnack inequality.  
\bl\label{LM-02} 
Let $n \geq 2$,  $\al \in \mathbb{R}$ and $1 < p < \frac{n+2\sg}{n-2\sg}$. 

\begin{itemize}

\item [(1)] Suppose that $U$ is a nonnegative weak solution of \eqref{Iso3}.  Then there exists a constant $C=C(n,p,\al,\sg)$ such that for all $0 < r < \frac{1}{8}$, we have 
\be\label{Har01}
\sup_{\mathcal{B}_{2r}^+ \backslash \overline{\mathcal{B}_{r/2}^+}} U \leq C  \inf_{\mathcal{B}_{2r}^+ \backslash \overline{\mathcal{B}_{r/2}^+}} U. 
\ee

\item [(2)] Suppose that $U$ is a nonnegative weak solution of \eqref{Iso3E}. Then there exists a constant $C=C(n,p,\al,\sg)$ such that for all $r > 8$, we have 
\be\label{Har02}
\sup_{\mathcal{B}_{2r}^+ \backslash \overline{\mathcal{B}_{r/2}^+}} U \leq C  \inf_{\mathcal{B}_{2r}^+ \backslash \overline{\mathcal{B}_{r/2}^+}} U. 
\ee
\end{itemize}
\el
\bp
Let 
$$
V_r(X)=U(rX)
$$
for $X \in \mathcal{B}_4^+ \backslash \overline{\mathcal{B}_{1/4}^+}$.  Then $V_r$ satisfies 
\begin{equation}\label{Upp061}
\begin{cases}
-\textmd{div}(t^{1-2\sg}  \nabla V_r)=0~~~~~~~~~& \textmd{in} ~\mathcal{B}_4^+ \backslash \overline{\mathcal{B}_{1/4}^+},\\
\frac{\partial V_r}{\partial \nu^\sg}(x, 0)=a_r(x)v_r(x)~~~~~~~~& \textmd{on} ~B_4 \backslash \overline{B}_{1/4},\\
\end{cases}
\end{equation}
where $v_r(x)=V_r(x,0)$ and $a_r(x)=r^{2\sg + \al} |x|^\al \left( u(rx) \right)^{p-1}$.  By Proposition \ref{SDE}, 
$$
|a_r(x)| \leq C ~~~~~~~\textmd{for} ~ \text{all} ~1/4 \leq |x| \leq 4,
$$
where $C$ is a positive constant independent of $r$ and $U$.   By  Harnack inequality in Lemma  \ref{HI01} and the standard Harnack inequality for uniformly elliptic equations, we have
$$
\sup_{\frac{1}{2} \leq |X| \leq 2} V_r(X)  \leq C \inf_{\frac{1}{2}\leq |X| \leq 2} V_r(X),
$$
where $C$ is another positive constant independent of $r$ and $U$.  We complete the proof by rescaling back to $U$.  
\end{proof}

\section{Classification of Isolated Singularities at $x=0$}
In this section, we classify the  isolated singularities of  positive  solutions of \eqref{Iso3} near the origin.    To this end, we need to establish a monotonicity formula for the nonnegative solutions $U$  of \eqref{Iso3} ({\it resp.}  of \eqref{Iso3E}).  Let  $U$  be a nonnegative solution of \eqref{Iso3} ({\it resp.}  of  \eqref{Iso3E}),  we  define
\be\label{EI000}
\aligned
E(r;U):= &  r^{\frac{2(p+1)\sg + 2\al}{p-1} - n }\left[ r \int_{\partial^+\mathcal{B}_r^+} t^{1-2\sg} | \frac{\partial U}{\partial \nu} |^2 + \frac{2\sg + \al}{p-1} \int_{\partial^+\mathcal{B}_r^+} t^{1-2\sg}\frac{\partial U}{\partial \nu} U\right] \\
& + \frac{2\sg + \al}{p-1}\left(\frac{2 \sg + \al}{p-1} - \frac{ n - 2\sg}{2} \right) r^{\frac{2(p+1)\sg +2\al }{p-1} - n -1}  \int_{\partial^+\mathcal{B}_r^+} t^{1-2\sg} U^2  \\
& -  \frac{1}{2} r^{\frac{2(p+1)\sg + 2\al}{p-1} - n+1 }  \int_{\partial^+\mathcal{B}_r^+} t^{1-2\sg} |\nabla U|^2  \\
& +  \frac{\kappa_\sg}{p+1} r^{\frac{(2\sg +\al) (p+1)}{p-1} -n +1}\int_{\partial B_r} u^{p+1} .
\endaligned
\ee
We recall that the  Hardy-Sobolev critical exponent is defined by  
$$
 p_S(\al):= \frac{n+2\sg +2\al}{n-2\sg}. 
$$
Then, we have the following monotonicity formula.  
\begin{proposition}\label{P302} 

Let $n \geq 2$,  $\al \in \mathbb{R}$ and $1 < p < \frac{n+2\sg}{n-2\sg}$. 

\begin{itemize}

\item [(1)] Suppose that $p  \leq  p_S(\al) $ and  $U$ is a nonnegative weak solution of \eqref{Iso3} ({\it resp.}  of \eqref{Iso3E}).  Then  $E(r; U)$ is non-decreasing in $r \in (0, 1)$ ({\it resp.} in $r \in (1, \infty)$).  Moreover, 
$$
\frac{d}{dr} E(r; U)=J_1 r^{\frac{2(p+1)\sg + 2\al}{p-1} -n} \int_{\partial^+\mathcal{B}_r^+} t^{1-2\sg} \left(\frac{\partial U}{\partial \nu} + \frac{2\sg + \al }{p-1} \frac{U}{r}\right)^2,
$$
where $J_1=\frac{n-2\sg}{p-1}\left( \frac{n+2\sg +2\al}{n-2\sg} - p \right) \geq 0$.  

\item [(2)]  Suppose that  $p > p_S(\al)$ and  $U$ is a nonnegative weak solution of \eqref{Iso3} ({\it resp.}  of \eqref{Iso3E}).  Then  $E(r; U)$ is non-increasing in $r \in (0, 1)$ ({\it resp.} in $r \in (1, \infty)$).  Moreover, 
$$
\frac{d}{dr} E(r; U)=J_1 r^{\frac{2(p+1)\sg +2\al}{p-1} -n} \int_{\partial^+\mathcal{B}_r^+} t^{1-2\sg} \left(\frac{\partial U}{\partial \nu} + \frac{2\sg +\al }{p-1} \frac{U}{r}\right)^2,
$$
where $J_1=\frac{n-2\sg}{p-1}\left( \frac{n+2\sg +2\al}{n-2\sg} - p \right) <0 $.  
\end{itemize} 
\end{proposition}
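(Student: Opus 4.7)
My plan is to combine a Pohozaev-type identity on $\mathcal{B}_r^+$ with a termwise differentiation of $E(r;U)$ in $r$. Set $a=(2\sigma+\alpha)/(p-1)$; this is the unique exponent that makes the rescaling $U_\lambda(X):=\lambda^a U(\lambda X)$ preserve equation \eqref{Iso3}. A direct change of variables shows $E(\lambda;U)=E(1;U_\lambda)$, so the $r$-prefactors in \eqref{EI000} are exactly those enforcing scale invariance of $E$; this is both a motivation and a useful consistency check.

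First I would establish a Pohozaev identity. Multiplying $-\mathrm{div}(t^{1-2\sigma}\nabla U)=0$ by the Pohozaev multiplier $X\cdot\nabla U+aU$ and integrating by parts over $\mathcal{B}_r^+$, using $\mathrm{div}(t^{1-2\sigma}X)=(n+2-2\sigma)t^{1-2\sigma}$ and decomposing the boundary as $\partial^+\mathcal{B}_r^+\cup\partial^0\mathcal{B}_r^+$, I arrive at an identity whose flat-boundary contribution uses the Neumann condition $\kappa_\sigma|x|^\alpha u^p$ and the vanishing $t\,\partial_t U\to 0$ as $t\to 0^+$ (since $\partial_t U\sim t^{2\sigma-1}$), and whose further integration by parts in $x$ uses $\mathrm{div}(|x|^\alpha x)=(n+\alpha)|x|^\alpha$. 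Eliminating $\int_{\mathcal{B}_r^+}t^{1-2\sigma}|\nabla U|^2$ via the weak form tested against $U$ then yields
\begin{align*}
&r\int_{\partial^+\mathcal{B}_r^+}t^{1-2\sigma}(\partial_\nu U)^2+\tfrac{n-2\sigma}{2}\int_{\partial^+\mathcal{B}_r^+}t^{1-2\sigma}\partial_\nu U\,U-\tfrac{r}{2}\int_{\partial^+\mathcal{B}_r^+}t^{1-2\sigma}|\nabla U|^2 \\
&\quad=\tfrac{(n-2\sigma)(p_S(\alpha)-p)}{2(p+1)}\,\kappa_\sigma\int_{B_r}|x|^\alpha u^{p+1}\,dx-\tfrac{\kappa_\sigma r^{\alpha+1}}{p+1}\int_{\partial B_r}u^{p+1}.
\end{align*}
The coefficient $p_S(\alpha)-p$ on the right is the structural source of the sign change at the critical exponent.

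Next I would differentiate $E(r;U)$ termwise. Parametrizing $X=r\omega$ on $\partial^+\mathcal{B}_r^+$ gives $\int_{\partial^+\mathcal{B}_r^+}t^{1-2\sigma}F=r^{n+1-2\sigma}\int_{\partial^+\mathcal{B}_1^+}\omega_t^{1-2\sigma}F(r\omega)$, so $\frac{d}{dr}$ of each weighted surface integral produces a \emph{geometric} piece $\frac{n+1-2\sigma}{r}$ times the integral, plus a \emph{radial} piece $\int_{\partial^+\mathcal{B}_r^+}t^{1-2\sigma}\partial_\nu F$. Likewise $\frac{d}{dr}\int_{B_r}|x|^\alpha u^{p+1}=r^\alpha\int_{\partial B_r}u^{p+1}$ by coarea. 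The polynomial prefactors in \eqref{EI000} are engineered so that, upon summing all five contributions, the geometric pieces recombine into a constant multiple of the left-hand side of the Pohozaev identity, while the derivative of $T_5(r):=\frac{\kappa_\sigma}{p+1}r^{\beta+\alpha+1}\int_{\partial B_r}u^{p+1}$ (with $\beta:=\frac{2(p+1)\sigma+2\alpha}{p-1}-n$) contributes exactly the $\int_{\partial B_r}u^{p+1}$ terms required to cancel the right-hand side of that identity after substitution. What remains is a quadratic form on $\partial^+\mathcal{B}_r^+$ in $\partial_\nu U$ and $U/r$, and completing the square produces
$$
\frac{d}{dr}E(r;U)=J_1\,r^{\beta}\int_{\partial^+\mathcal{B}_r^+}t^{1-2\sigma}\Bigl(\partial_\nu U+\frac{2\sigma+\alpha}{p-1}\frac{U}{r}\Bigr)^2
$$
with $J_1=\frac{n-2\sigma}{p-1}(p_S(\alpha)-p)$. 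Since the integrand is a perfect square, the monotonicity is governed entirely by the sign of $J_1$: $E$ is non-decreasing when $p\le p_S(\alpha)$ (so $J_1\ge 0$) and non-increasing when $p>p_S(\alpha)$ (so $J_1<0$), yielding both parts at once. The exterior case \eqref{Iso3E} for $r>1$ follows by the same local computation on $\mathcal{B}_r^+$.

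The main obstacle is the bookkeeping: matching the geometric and radial derivatives of the five terms in $E$ against the exact linear combination appearing in the Pohozaev identity, and isolating the perfect-square structure through the cancellation of the bulk $\int_{B_r}|x|^\alpha u^{p+1}$ and the $\partial B_r$ boundary terms. A secondary technical point is justifying the integrations by parts for merely weak solutions; this is handled by a cutoff/mollification away from the singular origin and a limit argument using the local regularity $U\in W^{1,2}_{\mathrm{loc}}(t^{1-2\sigma};\overline{\mathcal{B}_1^+}\setminus\{0\})\cap C^{\gamma}_{\mathrm{loc}}(\overline{\mathcal{B}_1^+}\setminus\{0\})$ from Section 2, together with the boundary behavior $t\,\partial_t U\to 0$ coming from standard weighted elliptic estimates.
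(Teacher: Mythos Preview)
Your approach is correct and will yield the stated formula, but it differs from the paper's proof in an instructive way. The paper does \emph{not} work directly with a Pohozaev identity on $\mathcal{B}_r^+$ and termwise differentiation of the five pieces of $E$. Instead it passes to cylindrical (Emden--Fowler) variables: writing $X=r\theta$ and setting
\[
V(s,\theta)=r^{\frac{2\sigma+\alpha}{p-1}}U(r,\theta),\qquad s=\ln r,
\]
the equation becomes \emph{autonomous} in $s$, namely $V_{ss}-J_1V_s-J_2V+\theta_1^{2\sigma-1}\mathrm{div}_\theta(\theta_1^{1-2\sigma}\nabla_\theta V)=0$ on $\mathbb{S}^n_+$ with the nonlinear Neumann condition on $\partial\mathbb{S}^n_+$. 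Multiplying by $V_s$ and integrating over $\mathbb{S}^n_+$ immediately gives $\frac{d}{ds}\widetilde E(s)=J_1\int_{\mathbb{S}^n_+}\theta_1^{1-2\sigma}(V_s)^2$ for a simple energy $\widetilde E$; rescaling back then identifies $\widetilde E(\ln r)$ with $E(r;U)$ and $V_s$ with $r^{a}\bigl(r\partial_\nu U+aU\bigr)$.

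The two arguments are secretly the same computation: your Pohozaev multiplier $X\cdot\nabla U+aU$ is exactly $r^{-a}V_s$, so multiplying by it in the original coordinates is the Fowler multiplication by $V_s$ in disguise. What the paper's route buys is that all of your ``bookkeeping obstacle''---matching the geometric and radial pieces of five terms against the Pohozaev identity and isolating the perfect square---collapses to a one-line energy identity, because the substitution absorbs both the scaling prefactors and the damping term $J_1V_s$ simultaneously. Your route, by contrast, stays in the original variables and is perhaps more transparent about where the critical exponent $p_S(\alpha)$ enters (through the flat-boundary Pohozaev term), at the cost of heavier algebra.
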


\bp
We shall take the standard polar coordinates in $\mathbb{R}^{n+1}_+$:  $X=(x,t)=r\theta$, where $r=|X|$ and $\theta=\frac{X}{|X|}$.  Let $\theta_1=\frac{t}{|X|}$ denote the component of $\theta$ in the $t$ direction and 
$$\mathbb{S}^n_+=\{X\in \mathbb{R}^{n+1}_+ : r=1, \theta_1 >0\}$$
denote the upper unit half-sphere.
 
 \vskip0.10in
 
Let $U$ be a nonnegative weak solution of \eqref{Iso3}.  Using  the classical change of variable in Fowler \cite{F},
$$
V(s, \theta)= r^{\frac{2\sg +\al}{p-1}} U(r, \theta),~~~~~ s=\ln r.
$$
Direct  calculations show that $V$ satisfies
\begin{equation}\label{P30111}
\begin{cases}
V_{ss} - J_1 V_s - J_2V + \theta_1^{2\sg-1} \text{div}_\theta (\theta_1^{1-2\sg} \nabla_\theta V)=0~~~~~~~~~& \textmd{in} ~(-\infty, 0) \times\mathbb{S}^n_+ ,\\
-\lim_{\theta_1\rightarrow 0^+} \theta_1^{1-2\sg} \partial_{\theta_1} V = \kappa_\sg V^p~~~~~~~~& \textmd{on} ~(-\infty, 0) \times \partial \mathbb{S}^n_+,\\
\end{cases}
\end{equation}
where 
$$
J_1=\frac{n - 2\sg}{p-1}\left( \frac{n+2\sg +2\al}{n -2\sg} - p \right),~~~~~
J_2=\frac{2\sg +\al}{p-1}\left(n-2\sg-\frac{2\sg + \al}{p-1}\right).
$$
Multiplying \eqref{P30111} by $V_s$ and integrating,  we have
\begin{equation}\label{P30122}
\aligned
& \int_{\mathbb{S}^n_+}\theta_1^{1-2\sg}V_{ss}V_s   - J_2\int_{\mathbb{S}^n_+}\theta_1^{1-2\sg}VV_s -\int_{\mathbb{S}^n_+}\theta_1^{1-2\sg}\nabla_\theta V \cdot \nabla_\theta V_s + \kappa_\sg \int_{\partial \mathbb{S}^n_+}V^pV_s\\
& = J_1 \int_{\mathbb{S}^n_+}\theta_1^{1-2\sg}(V_s)^2.
\endaligned
\end{equation}
For any $s\in (-\infty, 0)$, we define 
$$
\aligned
\widetilde{E}(s):=   &   \frac{1}{2}\int_{\mathbb{S}^n_+}\theta_1^{1-2\sg} (V_s)^2 -\frac{J_2}{2}\int_{\mathbb{S}^n_+}\theta_1^{1-2\sg} V^2 - \frac{1}{2}\int_{\mathbb{S}^n_+}\theta_1^{1-2\sg} |\nabla_\theta V|^2 \\
&   +\frac{\kappa_\sg}{p+1}\int_{\partial \mathbb{S}^n_+} V^{p+1}.
\endaligned
$$
Then, by \eqref{P30122}  we get
\begin{equation}\label{P30133}
\frac{d}{ds} \widetilde{E}(s)= J_1 \int_{\mathbb{S}^n_+}\theta_1^{1-2\sg}(V_s)^2.
\end{equation}
Note that  
$$
\begin{cases}
J_1 \geq 0~~~~~~~~~ & \textmd{when} ~~ p \leq p_S(\al),\\
J_1 < 0~~~~~~~~~ & \textmd{when} ~~ p > p_S(\al). 
\end{cases}
$$
Hence, $\widetilde{E}(s)$ is non-decreasing in $s\in (-\infty, 0)$ if $p \leq p_S(\al)$ and $\widetilde{E}(s)$ is non-increasing  in $s\in (-\infty, 0)$ if $p >  p_S(\al)$.

\vskip0.10in

Now, rescaling back to $U$,  we have
$$
\aligned
& \int_{\mathbb{S}^n_+}\theta_1^{1-2\sg} (V_s)^2  \\
&  = \int_{\mathbb{S}^n_+}\theta_1^{1-2\sg} \left(\frac{2\sg +\al}{p-1} r^{\frac{2\sg +\al}{p-1}-1}U + r^{\frac{2\sg +\al}{p-1}} U_r\right)^2r^2\\
& = r^{\frac{2(p+1)\sg +2 \al}{p-1} -n}\int_{\partial^+\mathcal{B}_r^+} t^{1-2\sg}\left(\frac{(2\sg+\al)^2}{(p-1)^2}  r^{-1}U^2 +\frac{2(2\sg+\al)}{p-1} U \frac{\partial U}{\partial \nu} +r |\frac{\partial U}{\partial \nu}|^2\right),
\endaligned
$$
$$
\int_{\mathbb{S}^n_+}\theta_1^{1-2\sg} |\nabla_\theta V|^2=r^{\frac{2(p+1)\sg + 2\al}{p-1} -n+1}\int_{\partial^+\mathcal{B}_r^+} t^{1-2\sg} \left( |\nabla U|^2 - |\frac{\partial U}{\partial \nu}|^2 \right),
$$
$$
\int_{\mathbb{S}^n_+}\theta_1^{1-2\sg} V^2=r^{\frac{2(p+1)\sg +2\al}{p-1} -n-1}\int_{\partial^+\mathcal{B}_r^+} t^{1-2\sg} U^2,
$$
$$
\int_{\partial \mathbb{S}^n_+} V^{p+1} = r^{\frac{(2\sg +\al)(p+1)}{p-1} -n+1}\int_{\partial B_r} u^{p+1}.
$$
Substituting these into \eqref{P30133} and noting  that $s=\ln r$ is increasing  in $r$, we easily obtain that $E(r;U)$ is non-decreasing in $r \in (0, 1)$ if $p \leq p_S(\al)$ and  it  is  non-increasing  in $r \in (0, 1)$  if $p >  p_S(\al)$.

\vskip0.10in

If $U$ is  a nonnegative  solution of  \eqref{Iso3E}, we just need to replace $s \in (-\infty, 0)$ in the above proof  with $s \in (0, \infty)$. The proof is finished. 
\ep

By using the monotonicity of $E(r; U)$,  we prove the following proposition, which will play an essential  role in deriving the lower bound of singular positive  solutions.

\begin{proposition}\label{P303}
Let $U$ be a nonnegative weak solution of \eqref{Iso3} with $-2 \sigma< \alpha < 2\sg $ and $ \frac{n + \al}{n-2\sg} < p < \frac{n+2\sg}{n-2\sg}$.  If
$$
\liminf_{|x| \rightarrow 0}|x|^{\frac{2\sg +\al}{p-1}} u(x)=0,
$$
then
$$
\lim_{|x| \rightarrow 0}|x|^{\frac{2\sg +\al}{p-1}} u(x)=0. 
$$
\end{proposition}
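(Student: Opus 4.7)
I argue by contradiction, assuming the liminf vanishes while $\limsup_{|x|\to 0} |x|^{(2\sg+\al)/(p-1)} u(x) > 0$. By the annular Harnack inequality (Lemma \ref{LM-02}), this yields two sequences $r_k \downarrow 0$ and $r'_k \downarrow 0$ such that
\[
\sup_{r_k/2 \le |x|\le 2r_k} |x|^{\frac{2\sg+\al}{p-1}}u(x) \longrightarrow 0, \qquad \inf_{r'_k/2 \le |x|\le 2r'_k}|x|^{\frac{2\sg+\al}{p-1}}u(x) \ge c_0 > 0,
\]
so that the trace oscillates between ``annular zero'' and ``annular positive'' as $|x|\to 0$.

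To exploit Proposition \ref{P302} I introduce the rescalings $V_k(X) := r_k^{(2\sg+\al)/(p-1)} U(r_k X)$, which solve the same degenerate equation on growing half-annuli. Proposition \ref{SDE} and the degenerate Hölder/Schauder theory of \cite{Cab-S,J-L-X} give $V_k \to 0$ in $C^\gamma$ and weakly in $W^{1,2}(t^{1-2\sg})$ on any compact subset of $\overline{\mathcal{B}^+_2}\setminus \mathcal{B}^+_{1/2}$, so the scale-invariant quantity $E(r_k;U) = E(1;V_k)$ vanishes in the limit. Since $r\mapsto E(r;U)$ is monotone by Proposition \ref{P302}, this forces $\lim_{r\to 0^+}E(r;U) = 0$, with the correct sign on all small $r$.

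Passing to the Fowler variables $s=\ln r$ and $V(s,\q) = r^{(2\sg+\al)/(p-1)} U(r\q)$ from the proof of Proposition \ref{P302}, the identity $\frac{d}{ds}\widetilde{E}(s) = J_1 \int_{\BS^n_+}\q_1^{1-2\sg} V_s^2\, d\q$ combined with the bound $|\widetilde{E}(0) - \widetilde{E}(-\infty)| < \infty$ (valid when $J_1\ne 0$; the borderline case $p=p_S(\al)$ is handled via the Pohozaev-type input from \cite{YZ1,YZ2}) yields the square-integrability
\[
\int_{-\infty}^{0}\!\int_{\BS^n_+} \q_1^{1-2\sg} V_s^2 \,d\q \,ds < \infty.
\]
Consequently, for any sequence $s_m \to -\infty$, the translates $V(\,\cdot + s_m,\cdot)$ converge, along a subsequence, to a nonnegative limit $V_\infty(\q)$ \emph{independent of $s$} solving
\[
\begin{cases}
-\q_1^{2\sg-1}\mathrm{div}_\q(\q_1^{1-2\sg}\nabla_\q V_\infty) + J_2 V_\infty = 0 & \text{in } \BS^n_+,\\
-\lim_{\q_1\to 0^+}\q_1^{1-2\sg}\pa_{\q_1} V_\infty = \kappa_\sg V_\infty^p & \text{on }\pa \BS^n_+.
\end{cases}
\]
By the uniqueness of nonnegative solutions of this degenerate spherical problem (the ingredient advertised in the abstract), necessarily $V_\infty\equiv 0$ or $V_\infty \equiv C_{p,\sg,\al}$.

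To close the contradiction I use an intermediate-value step. The sequence $r_k$ produces $V_\infty\equiv 0$, whereas $r'_k$, together with the Harnack lower bound $c_0$, forces $V_\infty \equiv C_{p,\sg,\al}$. Since $s \mapsto \sup_\q V(s,\q)$ is continuous and oscillates between arbitrarily small values and values close to $C_{p,\sg,\al}$ as $s \to -\infty$, one may select $s''_m \to -\infty$ with $\sup_\q V(s''_m,\q) = \tfrac12 C_{p,\sg,\al}$; Lemma \ref{LM-02} then gives $\inf_\q V(s''_m,\q) \ge c_1>0$. A third blow-up at these heights produces a limit $V^*_\infty$, again $s$-independent by the same integrability, which satisfies $0<\sup V^*_\infty \le \tfrac12 C_{p,\sg,\al}$ and is therefore neither $0$ nor $C_{p,\sg,\al}$, contradicting the dichotomy. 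The principal obstacle I anticipate is the spherical uniqueness statement (``nonnegative solutions of the limit problem on $\BS^n_+$ are $0$ or $C_{p,\sg,\al}$''), where the subcritical restriction $p<\tfrac{n+2\sg}{n-2\sg}$ enters essentially; a secondary technical burden is rigorously justifying all limit passages in the weighted-Sobolev framework using the degenerate regularity of \cite{Cab-S,J-L-X}.
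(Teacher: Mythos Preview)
Your outline shares the skeleton of the paper's Case~2 argument (valid for $p\neq p_S(\al)$): blow up along the vanishing sequence to obtain $\lim_{r\to 0^+}E(r;U)=0$, then analyse blow-up limits. Two steps, however, do not go through as written.

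\textbf{The spherical dichotomy is unproven and, in fact, unnecessary.} You assert that nonnegative solutions of the limiting problem on $\mathbb{S}^n_+$ are exactly $0$ or $C_{p,\sg,\al}$; you flag this as ``the principal obstacle'' but do not resolve it. It is not established anywhere in the paper: the identification used later in the proof of Theorem~\ref{AB-T1} relies on the two-sided bound of Theorem~\ref{C-T2} together with the symmetry result from \cite{LB2}, so invoking it here would be circular. The paper's Case~2 avoids the dichotomy entirely. Once $\lim_{r\to 0^+}E(r;U)=0$, \emph{every} blow-up limit $W$ (along \emph{any} sequence $\lambda\to 0$) inherits $E(r;W)\equiv 0$. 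For the homogeneous profile $\varphi$ this is precisely the Pohozaev-type identity \eqref{Hom03}; combining it with the energy identity \eqref{Hom02} obtained by testing the spherical equation against $\varphi$ gives $\big(1-\tfrac{2}{p+1}\big)\int_{\partial\mathbb{S}^n_+}\varphi^{p+1}=0$, hence $\varphi\equiv 0$ (using $J_2>0$). Thus every blow-up limit vanishes, which already contradicts the positivity along your sequence $r_k'$; the three-sequence intermediate-value trick and the classification on $\mathbb{S}^n_+$ are both superfluous.

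\textbf{The borderline $p=p_S(\al)$ is a genuine gap.} Here $J_1=0$, so $\widetilde{E}$ is constant and yields no integrability of $V_s$; your appeal to \cite{YZ1,YZ2} is not a proof. The paper treats this case by a completely different mechanism (its Case~1, which in fact covers all $p\le p_S(\al)$): one selects local minima $r_i$ of $g(r)=r^{(2\sg+\al)/(p-1)}\bar u(r)$ and takes the \emph{normalised} blow-up $V_i(X)=U(r_iX)/U(r_ie_1)$. Because $r_i^{(2\sg+\al)/(p-1)}U(r_ie_1)\to 0$, the nonlinear boundary term disappears in the limit and a B\^ocher-type theorem from \cite{J-L-X} forces $V_i\to a|X|^{-(n-2\sg)}+b$, with $a,b$ determined by the condition $g'(r_i)=0$. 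A direct evaluation of the rescaled energy on this explicit limit produces a strictly negative number, contradicting $E(r;U)\ge 0$ (which follows from monotonicity and $E(r_i;U)\to 0$). This argument makes no use of $V_s^2$-integrability and therefore survives at $J_1=0$.
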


\bp
We consider separately the case $p \leq p_S(\al)$ and the case $p>p_S(\al)$. 

\vskip0.10in

{\bf  Case 1: $p \leq p_S(\al)$}.    Suppose by contradiction that
$$
\liminf_{|x| \rightarrow 0}|x|^{\frac{2\sg +\al}{p-1}} u(x)=0 ~~~~ \textmd{and} ~~~~  \limsup_{|x| \rightarrow 0}|x|^{\frac{2\sg +\al }{p-1}} u(x)=C > 0. 
$$
Then there exist two sequences of points $\{x_i\}$ and $\{y_i\}$ satisfying
$$
x_i \rightarrow 0, ~~~y_i \rightarrow 0~~~~\textmd{as}~ i \rightarrow \infty,              
$$
such that
$$
|x_i|^{\frac{2\sg +\al}{p-1}} u(x_i)\rightarrow 0~~~ \textmd{and}~~~|y_i|^{\frac{2\sg +\al}{p-1}} u(y_i)\rightarrow C>0~~~\textmd{as}~ i \rightarrow \infty.
$$
Let $g(r)=r^{\frac{2\sg +\al}{p-1}} \bar{u}(r)$, where  $\bar{u}(r)=\frac{1}{|\partial B_r|}\int_{\partial B_r} u$ denotes the spherical average of $u$ over $\partial B_r$. By the Harnack inequality \eqref{Har01}, we have
$$
\liminf_{r \rightarrow 0}g(r)=0 ~~~~ \textmd{and} ~~~~  \limsup_{r \rightarrow 0}g(r)=C > 0. 
$$
Hence, there exists a  sequence of local minimum points $r_i$ of $g(r)$ such that 
$$
\lim_{i\rightarrow \infty} r_i=0 ~~~~ \textmd{and} ~~~~ \lim_{i\rightarrow \infty} g(r_i)=0. 
$$
Define 
$$
V_i(X)=\frac{U(r_i X)}{U(r_i e_1)},
$$
where $e_1=(1,0,\cdots,0) \in \mathbb{R}^{n+1}$.   It follows  from   Harnack inequality \eqref{Har01} that $V_i$ is locally uniformly bounded away from the origin  and   satisfies   
\begin{equation}\label{P303001}
\begin{cases}
-\textmd{div}(t^{1-2\sg}  \nabla V_i)=0~~~~~~~~~& \textmd{in} ~\mathbb{R}^{n+1}_+,\\
\frac{\partial V_i}{\partial \nu^\sg}(x, 0)=\kappa_\sg \left(r_i^{\frac{2\sg +\al}{p-1}} U(r_ie_1) \right)^{p-1} |x|^\al  V_i^p(x, 0)~~~~~~~~& \textmd{on} ~\mathbb{R}^n \backslash \{0\}.
\end{cases}
\end{equation}
Note that by the Harnack inequality \eqref{Har01}, $r_i^{\frac{2\sg +\al}{p-1}} U(r_ie_1)\rightarrow 0$ as $i\rightarrow \infty$.  By Corollary 2.10 and Theorem 2.15  in \cite{J-L-X}  there exists $\gamma \in (0, 1)$ such that for every $R > 1 > r > 0$, 
$$
\|V_i\|_{W^{1,2}(t^{1-2\sg}, \mathcal{B}_R^+ \backslash \overline{\mathcal{B}}_r^+)} + \|V_i\|_{C^\gamma (\mathcal{B}_R^+ \backslash \overline{\mathcal{B}}_r^+)} + \|v_i\|_{C^{2,\gamma}(B_R \backslash B_r)} \leq C(R, r),
$$
where $v_i(x)=V_i(x,0)$ and $C(R, r)$ is independent of $i$. Then after passing to a subsequence, $\{V_i\}$ converges to a nonnegative function $V  \in W_{loc}^{1,2}(t^{1-2\sg}, \overline{\mathbb{R}^{n+1}_+} \backslash \{0\}) \cap C^\gamma_{loc}(\overline{\mathbb{R}^{n+1}_+} \backslash \{0\})$ satisfying 
\begin{equation}\label{P303002}
\begin{cases}
-\textmd{div}(t^{1-2\sg}  \nabla V)=0~~~~~~~~~& \textmd{in} ~\mathbb{R}^{n+1}_+,\\
\frac{\partial V}{\partial \nu^\sg}(x, 0)=0~~~~~~~~& \textmd{on} ~\mathbb{R}^n \backslash \{0\}.
\end{cases}
\end{equation}
By a B\^{o}cher type theorem in \cite{J-L-X}, we have
$$
V(X)=\frac{a}{|X|^{n-2\sg}} + b,
$$
where $a, b$ are nonnegative constants.  Recall that $r_i$ is a  local minimum point  of $g(r)$ for every $i$ and note that 
$$
r^{\frac{2\sg +\al}{p-1}} \bar{v}_i(r) =r^{\frac{2\sg +\al}{p-1}}\frac{1}{|\partial B_r|} \int_{\partial B_r}v_i=\frac{1}{U(r_ie_1)}r^{\frac{2\sg +\al}{p-1}}\bar{u}(r_i r)=\frac{1}{U(r_ie_1)r_i^{\frac{2\sg +\al}{p-1}}} g(r_i r). 
$$
Hence,  we have
\begin{equation}\label{P303003}
\frac{d}{dr} \left[ r^{\frac{2\sg +\al}{p-1}} \bar{v}_i(r)\right] \Bigg|_{r=1} = \frac{r_i}{U(r_ie_1)r_i^{\frac{2\sg +\al }{p-1}}} g^\prime (r_i )=0.
\end{equation}
Let $v(x)=V(x,0)$. Then   $v_i(x) \rightarrow v(x)$ in $C_{loc}^2(\mathbb{R}^n \backslash \{0\})$. By \eqref{P303003}  we obtain 
$$
\frac{d}{dr} \left[ r^{\frac{2\sg +\al}{p-1}} \bar{v}(r)\right] \Bigg|_{r=1} =0,
$$
which implies that
\begin{equation}\label{P303004}
a\left(\frac{2\sg +\al}{p-1} - (n-2\sg)\right) +  \frac{(2\sg +\al) b}{p-1}=0.
\end{equation}
On the other hand,  $V(e_1)=1$ implies 
\begin{equation}\label{P303005}
a+b=1.
\end{equation}
Combining  \eqref{P303004} with \eqref{P303005}, we get
$$
a=\frac{2\sg +\al}{(p-1)(n-2\sg)}~~~~~ \textmd{and} ~~~~~ b=1- \frac{2\sg +\al}{(p-1)(n-2\sg)}.
$$
Since $-2\sg < \al < 2\sg$ and $\frac{n +\al}{n-2\sg} < p$, we have $0 < a, b < 1$. Next we compute $E(r; U)$.

\vskip0.10in

It follows from Proposition 2.19 in \cite{J-L-X} that $|\nabla_x V_i|$ and $|t^{1-2\sg} \partial_t V_i|$ are locally uniformly bounded in $C_{loc}^\beta (\overline{\mathbb{R}^{n+1}_+}\backslash \{0\})$ for some $\beta >0$. Hence,  there exists a  constant $C>0$ such that 
$$
|\nabla_x U(X)| \leq C r_i^{-1} U(r_i e_1)= o(1) r_i^{-\frac{2\sg +\al}{p-1} - 1}~~~~~~ \textmd{for}  ~ \textmd{all} ~|X|=r_i
$$
and
$$
|t^{1-2\sg }\partial_t U(X)| \leq C r_i^{-2\sg} U(r_i e_1)= o(1) r_i^{-\frac{2\sg +\al}{p-1} - 2\sg}~~~~~~ \textmd{for}  ~ \textmd{all} ~|X|=r_i. 
$$
By the Harnack inequality \eqref{Har01}, we also have
$$
U(X) \leq C U(r_i e_1) = o(1) r_i^{-\frac{2\sg +\al}{p-1}}~~~~~~ \textmd{for}  ~ \textmd{all} ~|X|=r_i. 
$$
Thus, we   estimate 
$$
\aligned
r_i^{\frac{2(p+1)\sg +2\al}{p-1}-n+1} \int_{\partial^+\mathcal{B}_{r_i}^+} t^{1-2\sg} |\nabla U|^2 & \leq  r_i^{\frac{2(p+1)\sg +2\al}{p-1}-n+1} \bigg(o(1) r_i^{-\frac{4\sg +2\al}{p-1} -2} \int_{\partial^+\mathcal{B}_{r_i}^+} t^{1-2\sg} \\
&  ~~~~ + o(1) r_i^{-\frac{4\sg +2\al}{p-1} -4\sg} \int_{\partial^+\mathcal{B}_{r_i}^+} t^{2\sg-1} \bigg)\\
& \leq C o(1), 
\endaligned
$$
$$
r_i^{\frac{2(p+1)\sg +2\al}{p-1}-n-1} \int_{\partial^+\mathcal{B}_{r_i}^+} t^{1-2\sg} U^2 \leq  o(1) r_i^{2\sg-n-1}\int_{\partial^+\mathcal{B}_{r_i}^+} t^{1-2\sg} \leq C o(1)
$$
and
$$
r_i^{\frac{(2\sg +\al)(p+1)}{p-1}-n+1} \int_{\partial B_{r_i}} u^{p+1} \leq C o(1),
$$
where the constant $C$ is independent of $i$.  By the definition of $E(r; U)$, we have  
$$
\lim_{i \rightarrow \infty} E(r_i; U)=0.
$$
Since $E(r; U)$ is non-decreasing in $r \in (0, 1)$ for  this case, we obtain
\be\label{Dec01}
E(r; U) \geq 0~~~~~ \textmd{for} ~ \textmd{all} ~ r\in (0, 1).
\ee
On the other hand, by the scaling invariance of $E(r; U)$, we have for every  $i$ that  
$$
0 \leq E(r_i; U)=E\left(1; r_i^{\frac{2\sg +\al}{p-1}} U(r_i X) \right) = E\left(1; r_i^{\frac{2\sg +\al}{p-1}} U(r_i e_1)V_i \right).
$$
Hence,  we have
$$
\aligned
 0 \leq  & \int_{\partial^+\mathcal{B}_1^+} t^{1-2\sg} | \frac{\partial V_i}{\partial \nu} |^2 + \frac{2\sg +
 \al}{p-1} \int_{\partial^+\mathcal{B}_1^+} t^{1-2\sg}\frac{\partial V_i}{\partial \nu} V_i \\
& + \frac{2\sg +\al}{p-1}\left(\frac{2\sg+\al}{p-1} - \frac{n-2\sg}{2}\right)   \int_{\partial^+\mathcal{B}_1^+} t^{1-2\sg} V_i^2  \\
& -  \frac{1}{2} \int_{\partial^+\mathcal{B}_1^+} t^{1-2\sg} |\nabla V_i|^2  +  \frac{\kappa_\sg}{p+1}\int_{\partial B_1} \left(r_i^{\frac{2\sg +\al}{p-1}} U(r_i e_1)\right)^{p-1}V_i^{p+1}.
\endaligned
$$
Letting $i \rightarrow \infty$, we obtain
$$
\aligned
 0  & \leq  \int_{\partial^+\mathcal{B}_1^+} t^{1-2\sg} | \frac{\partial V}{\partial \nu} |^2 + \frac{2\sg +\al}{p-1} \int_{\partial^+\mathcal{B}_1^+} t^{1-2\sg}\frac{\partial V}{\partial \nu} V \\
& ~~~~ + \frac{2\sg +\al}{p-1}\left(\frac{2\sg+\al}{p-1} - \frac{n-2\sg}{2}\right)   \int_{\partial^+\mathcal{B}_1^+} t^{1-2\sg} V^2 -  \frac{1}{2} \int_{\partial^+\mathcal{B}_1^+} t^{1-2\sg} |\nabla V|^2 \\
& =a^2(n-2\sg)^2 \int_{\partial^+\mathcal{B}_1^+} t^{1-2\sg} - a(n-2\sg)\frac{2\sg +\al}{p-1}\int_{\partial^+\mathcal{B}_1^+} t^{1-2\sg} \\
& ~~~~ +\frac{2\sg +\al}{p-1}\left(\frac{2\sg +\al}{p-1} - \frac{n-2\sg}{2}\right)   \int_{\partial^+\mathcal{B}_1^+} t^{1-2\sg}-\frac{1}{2}a^2(n-2\sg)^2\int_{\partial^+\mathcal{B}_1^+} t^{1-2\sg}\\
& = \frac{1}{2}\frac{2\sg +\al}{p-1} \left(\frac{2\sg +\al}{p-1} - (n-2\sg)\right)   \int_{\partial^+\mathcal{B}_1^+} t^{1-2\sg} <0.
\endaligned
$$
Here we have used the facts  $2\sg +\al >0$ and   $\frac{2\sg +\al}{p-1} - (n-2\sg) <0$  in the last inequality.  We get a contradiction.  This completes the proof of Case 1.  

\vskip0.15in

{\bf  Case 2: $p > p_S(\al)$}. In this case,  it follows from  Proposition \ref{P302} (2)   that  $E(r; U)$ is non-increasing in $r \in (0, 1)$.   If we proceed as in the proof of Case 1, then we obtain $E(r; U) \leq 0$ for $r\in (0, 1)$ in \eqref{Dec01}, and so  we cannot  get a contradiction in the final proof.  Thus,   a new method is needed to deal with this supercritical case.    In fact, the following method is available for all $p \neq p_S(\al)$.      

\vskip0.10in
{\it Step 1. If $\liminf_{|x| \to 0} |x|^{\frac{2\sg +\al}{p-1}} u(x) =0$, then 
\be\label{LEU0}
\lim_{r \to 0^+} E(r; U)=0. 
\ee}
Since $\liminf_{|x| \to 0} |x|^{\frac{2\sg +\al}{p-1}} u(x) =0$,  there exists a sequence of points  $\{x_i\}$ such that
$$
x_i \to 0~~~~~~ \textmd{and} ~~~~~~ |x_i|^{\frac{2\sg +\al}{p-1}} u(x_i) \to 0~~~~~~ \textmd{as} ~ i \to \infty. 
$$
Let $r_i:=|x_i|$. By the Harnack inequality \eqref{Har01}, 
$$
r_i^{\frac{2\sg +\al}{p-1}} U(r_ie_1)\rightarrow 0 ~~~~~~ \textmd{as} ~ i\rightarrow \infty, 
$$
where $e_1=(1,0,\cdots,0) \in \mathbb{R}^{n+1}$. Define
$$
W_i(X)=r_i^{\frac{2\sg +\al}{p-1}} U(r_i X), ~~~~~~~ X \in \mathcal{B}_{1/r_i}^+ \backslash \{0\}. 
$$
It follows  from Proposition \ref{SDE} and  Harnack inequality \eqref{Har01} that $W_i$ is locally uniformly bounded away from the origin. Moreover, $W_i$ satisfies 
\be\label{Wi01}
\begin{cases}
-\textmd{div}(t^{1-2\sg}  \nabla W_i)=0~~~~~~~~~& \textmd{in} ~ \mathcal{B}_{1/r_i}^+,\\
\frac{\partial W_i}{\partial \nu^\sg}(x, 0)=\kappa_\sg |x|^\al W_i^p(x, 0)~~~~~~~~& \textmd{on} ~ \partial^0\mathcal{B}_{1/r_i}^+ \backslash  \{0\}, 
\end{cases}
\ee
and
\be\label{Wi02}
W_i(e_1) \rightarrow 0 ~~~~~~ \textmd{as} ~ i\rightarrow \infty.
\ee
By Corollary 2.10, Theorem 2.15 and Proposition 2.19 in \cite{J-L-X}  there exists $\gamma \in (0, 1)$ such that for every $R > 1 > r > 0$
$$
\|W_i\|_{W^{1,2}(t^{1-2\sg}, \mathcal{B}_R^+ \backslash \overline{\mathcal{B}}_r^+)} + \|W_i\|_{C^\gamma (\mathcal{B}_R^+ \backslash \overline{\mathcal{B}}_r^+)}   + \| t^{1-2\sg} \partial_t W_i \|_{C^\gamma (\mathcal{B}_R^+ \backslash \overline{\mathcal{B}}_r^+)}  \leq C(R, r),
$$
where  $C(R, r)$ is independent of $i$. Then after passing to a subsequence, $\{W_i\}$ converges to a nonnegative function $W  \in W_{loc}^{1,2}(t^{1-2\sg}, \overline{\mathbb{R}^{n+1}_+} \backslash \{0\}) \cap C^\gamma_{loc}(\overline{\mathbb{R}^{n+1}_+} \backslash \{0\})$ satisfying 
\begin{equation}\label{Whole}
\begin{cases}
-\textmd{div}(t^{1-2\sg}  \nabla W)=0~~~~~~~~~& \textmd{in} ~\mathbb{R}^{n+1}_+,\\
\frac{\partial W}{\partial \nu^\sg}(x, 0)=\kappa_\sg |x|^\al W^p(x, 0) ~~~~~~~~& \textmd{on} ~\mathbb{R}^n \backslash \{0\}.
\end{cases}
\end{equation}
By \eqref{Wi02}  we have $W(e_1)=0$. This together with  Lemma \ref{HI01} implies  that $W \equiv 0$ in $\overline{\mathbb{R}^{n+1}_+} \backslash \{0\}$. 
Since  $E(r; U)$ is invariant under the scaling,
$$
\lim_{i \to \infty} E(r_i; U)=\lim_{i \to \infty} E(1; W_i) =  E(1; W)=0.
$$
By the monotonicity of $E(r; U)$  (Proposition \ref{P302}), we obtain
$$
\lim_{r \to 0^+} E(r; U)=0.
$$

{\it Step 2. Let $W$ be a nonnegative solution of \eqref{Whole} in $\mathbb{R}^{n+1}_+$.  If $E(r; W) \equiv 0$  for $r \in (0, \infty)$, then
$$
W \equiv 0 ~~~~~~~~  \textmd{in} ~ \overline{\mathbb{R}^{n+1}_+} \backslash \{0\}. 
$$ }
Since $p \neq p_S(\al)$,  we have $J_1=\frac{n-2\sg}{p-1}\left( \frac{n+2\sg +2\al}{n-2\sg} - p \right) \neq 0$.  By Proposition \ref{P302}  we get  
$$
\frac{\partial W}{\partial r} + \frac{2\sg +\al}{p-1} \frac{W}{r} =0 ~~~~~~ \textmd{in}  ~ \mathbb{R}^{n+1}_+. 
$$
This implies that  $W$ is homogeneous of degree $-\frac{2\sg +\al}{p-1}$. That is, there exists $\varphi \in C^2(\mathbb{S}^{n}_+)$ such that
$$
W(X)= r^{-\frac{2\sg +\al}{p-1}} \varphi(\theta), 
$$
where $X=(x ,t) =r \theta$ with $r=|X|$ and $\theta=\frac{X}{|X|}$. Let $\theta_1=\frac{t}{|X|}$ denote  the component of $\theta$ in the $t$ direction. A calculation  similar to the proof of Proposition \ref{P302} shows that  $\varphi$ satisfies 
\begin{equation}\label{Hom01}
\begin{cases}
- \theta_1^{2\sg-1} \text{div}_\theta (\theta_1^{1-2\sg} \nabla_\theta \varphi) + J_2 \varphi =0~~~~~~~~~& \textmd{on} ~\mathbb{S}^n_+ ,\\
-\lim_{\theta_1\rightarrow 0^+} \theta_1^{1-2\sg} \partial_{\theta_1} \varphi = \kappa_\sg \varphi^p~~~~~~~~& \textmd{on} ~ \partial \mathbb{S}^n_+,\\
\end{cases}
\end{equation}
where  
$$
J_2=\frac{2\sg +\al}{p-1}\left(n-2\sg-\frac{2\sg + \al}{p-1}\right).
$$
Multiplying \eqref{Hom01} by $\varphi$ and integrating on $\mathbb{S}^n_+$, we obtain 
\be\label{Hom02}
\int_{\mathbb{S}^n_+} \theta_1^{1-2\sg} |\nabla_\theta \varphi|^2 + J_2 \int_{\mathbb{S}^n_+} \theta_1^{1-2\sg} \varphi^2 =\kappa_\sg \int_{\partial \mathbb{S}^n_+} \varphi^{p+1}. 
\ee
On the other hand,  by  the proof of Proposition \ref{P302}, $E(r; W) \equiv 0$ gives 
\be\label{Hom03}
 -\frac{J_2}{2}\int_{\mathbb{S}^n_+}\theta_1^{1-2\sg} \varphi^2 - \frac{1}{2}\int_{\mathbb{S}^n_+}\theta_1^{1-2\sg} |\nabla_\theta \varphi|^2 
+\frac{\kappa_\sg}{p+1} \int_{\partial \mathbb{S}^n_+} \varphi^{p+1} =0.
\ee
Combining  \eqref{Hom02} with  \eqref{Hom03}, we easily get 
$$
\left( 1 - \frac{2}{p+1}\right) \int_{\partial \mathbb{S}^n_+} \varphi^{p+1} =0, 
$$
and so $\varphi \equiv  0$ on $\partial \mathbb{S}^n_+$. By \eqref{Hom02} and $J_2 >0$, we obtain $\varphi =  0$ on  $\mathbb{S}^n_+$.    Hence $W \equiv 0$ in $\mathbb{R}^{n+1}_+$.  

\vskip0.10in

{\it Step 3. End of Proof.} For $\lambda >0$ small, define 
$$
U^\lambda (X)=\lambda^{\frac{2\sg +\al}{p-1}} U(\lambda X). 
$$
Then $U^\lambda$ is also a nonnegative solution of \eqref{Iso3} in $\mathcal{B}_{1/\lambda}^+$.  It follows  from Proposition \ref{SDE} and Harnack inequality \eqref{Har01} that $U^\lambda$ is locally uniformly bounded away from the origin.  By Corollary 2.10, Theorem 2.15 and Proposition 2.19 in \cite{J-L-X}  there exists $\gamma \in (0, 1)$ such that for every $R > 1 > r > 0$
$$
\|U^\lambda \|_{W^{1,2}(t^{1-2\sg}, \mathcal{B}_R^+ \backslash \overline{\mathcal{B}}_r^+)} + \|U^\lambda \|_{C^\gamma (\mathcal{B}_R^+ \backslash \overline{\mathcal{B}}_r^+)}   + \| t^{1-2\sg} \partial_t U^\lambda  \|_{C^\gamma (\mathcal{B}_R^+ \backslash \overline{\mathcal{B}}_r^+)}  \leq C(R, r),
$$
where  $C(R, r)$ is independent of $\lambda$. Hence, there is a subsequence $\lambda_i$ of $\lambda \to 0$   such that $\{U^{\lambda_i}\}$ converges to a nonnegative function $U^0  \in W_{loc}^{1,2}(t^{1-2\sg}, \overline{\mathbb{R}^{n+1}_+} \backslash \{0\}) \cap C^\gamma_{loc}(\overline{\mathbb{R}^{n+1}_+} \backslash \{0\})$ satisfying 
$$
\begin{cases}
-\textmd{div}(t^{1-2\sg}  \nabla U^0)=0~~~~~~~~~& \textmd{in} ~\mathbb{R}^{n+1}_+,\\
\frac{\partial U^0}{\partial \nu^\sg}(x, 0)=\kappa_\sg |x|^\al ( U^0(x, 0) )^p  ~~~~~~~~& \textmd{on} ~\mathbb{R}^n \backslash \{0\}.
\end{cases}
$$
Moreover, by the scaling invairance of $E$ and Step 1,  we have for any $r >0$ that 
$$
E(r; U^0)= \lim_{i \to \infty} E(r; U^{\lambda_i}) =  \lim_{i \to \infty} E(\lambda_i r; U) =\lim_{r \to 0^+}E(r; U)=0. 
$$
The conclusion of  Step 2  gives  $U^0 \equiv 0$ in $\overline{\mathbb{R}^{n+1}_+} \backslash \{0\}$.  Since the limiting function $U^0$ is unique for any subsequence of $\lambda \to 0$, we obtain 
$$
\lim_{\lambda \to 0} U^\lambda = 0 ~~~~~~~~ \textmd{in}~~~ C_{loc}^{\gamma/2}(\overline{\mathbb{R}^{n+1}_+} \backslash \{0\}).  
$$
In particular,  
$$
\lim_{\lambda \to 0} \lambda^{\frac{2\sg +\al}{p-1}} u(\lambda x) = 0 ~~~~~~~~ \textmd{uniformly}~ \textmd{for}~ x\in \partial B_1,  
$$
which  immediately implies  $\lim_{|x|\to 0} |x|^{\frac{2\sg +\al}{p-1}}u(x) =0$.   
\ep

\begin{proposition}\label{Re089}
Let $U$ be a nonnegative weak solution of \eqref{Iso3} with $-2 \sigma< \alpha < 2\sg $ and $ \frac{n + \al}{n-2\sg} < p < \frac{n+2\sg}{n-2\sg}$.  If
$$
\lim_{|x| \rightarrow 0}|x|^{\frac{2\sg +\al}{p-1}} u(x)=0,
$$
then the singularity at $x=0$ is removable, i.e., $U(x,t)$ can be extended to a continuous function near the origin $0$. 
\end{proposition}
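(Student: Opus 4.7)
Set $\mu := (2\sigma+\alpha)/(p-1)$. The assumption $p>(n+\alpha)/(n-2\sigma)$ is equivalent to $\mu<n-2\sigma$, so the hypothesis yields $u(x)=o(|x|^{-\mu})=o(|x|^{-(n-2\sigma)})$, and by Proposition~\ref{SDE}, the Harnack bound in Lemma~\ref{LM-02}, and the H\"older regularity of the Caffarelli--Silvestre extension from \cite{J-L-X}, the same decay transfers to $U(X)=o(|X|^{-(n-2\sigma)})$ as $X\to 0$ in $\overline{\mathbb{R}^{n+1}_+}$. A direct computation also shows that $f(x):=\kappa_\sigma|x|^\alpha u^p(x)=o(|x|^{-\gamma})$ with $\gamma:=(2\sigma p+\alpha)/(p-1)<n$, hence $f\in L^1(B_{1/2})$.

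The strategy is to split $U=V+W$, where $W$ absorbs the inhomogeneous conormal datum and $V$ is reduced to the setting of the B\^ocher theorem. Let $W$ be the weak solution on $\mathcal{B}_{1/2}^+$ of
\[
-\operatorname{div}(t^{1-2\sigma}\nabla W)=0,\qquad \partial_\nu^\sigma W(\cdot,0)=f \text{ on }\partial^0\mathcal{B}_{1/2}^+,\qquad W=0 \text{ on }\partial^+\mathcal{B}_{1/2}^+,
\]
obtained via the Green's representation. A splitting $f=f\chi_{B_{r_\epsilon}}+f\chi_{B_{1/2}\setminus B_{r_\epsilon}}$, together with the pointwise bound $|f|\leq\epsilon|x|^{-\gamma}$ on $B_{r_\epsilon}$ and the kernel identity $2\sigma-\gamma=-\mu$, yields $w(x):=W(x,0)=o(|x|^{-\mu})$, in particular $w=o(|x|^{-(n-2\sigma)})$. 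Setting $V:=U-W$, we then have that $V$ is a weak solution of the homogeneous degenerate equation in $\mathcal{B}_{1/2}^+$ with vanishing conormal derivative on $\partial^0\mathcal{B}_{1/2}^+\setminus\{0\}$, satisfying $V(X)=o(|X|^{-(n-2\sigma)})$.

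Adding a sufficiently large constant (which is itself an admissible solution) to render $V$ nonnegative, the B\^ocher-type theorem of \cite{J-L-X} (Proposition 3.1 there, already used in the proof of Proposition~\ref{P303}) gives $V(X)=a|X|^{-(n-2\sigma)}+b$ with $a\geq 0$ and $b\in\mathbb{R}$. The decay condition forces $a=0$, so $V$ extends continuously across the origin as the constant $b$. It then remains to upgrade the pointwise decay $w=o(|x|^{-\mu})$ to continuity of $w$ at $0$. This is achieved by a bootstrap: the improved bound $u\leq C+\epsilon|x|^{-\mu}$, valid in shrinking neighborhoods of $0$ for arbitrarily small $\epsilon$, is fed back into $f$; using $(C+\epsilon|x|^{-\mu})^p\leq 2^{p-1}(C^p+\epsilon^p|x|^{-p\mu})$, the right-hand side splits into a bounded contribution (producing an $L^q$ conormal datum for some $q>n/(2\sigma)$ since $|\alpha|<2\sigma$, hence a H\"older piece in $w$) and a remainder of order $\epsilon^p|x|^{-\gamma}$ (producing an $o(|x|^{-\mu})$ piece with strictly smaller coefficient thanks to $p>1$). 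Iterating finitely many times drives the singular coefficient to zero, so $u$ becomes locally bounded near $0$; Corollary 2.10 and Theorem 2.15 of \cite{J-L-X} then give H\"older continuity of $u$ up to the origin.

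The main technical obstacle is this final bootstrap: the raw estimate $w=o(|x|^{-\mu})$ does not by itself imply boundedness of $w$ when $\mu>0$, and it is precisely the superlinearity $p>1$, combined with the gain $|\alpha|<2\sigma$ which places the regular part of the Neumann datum into a subcritical Lebesgue space, that allows the singular coefficient to be driven to zero after finitely many steps.
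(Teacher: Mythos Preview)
Your approach differs substantially from the paper's, and as written it has two genuine gaps.

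\textbf{Gap 1: the B\^ocher step.} The result you cite from \cite{J-L-X} is for \emph{nonnegative} solutions. You propose to shift $V=U-W$ by a large constant to force $V\ge 0$, but this only works if $V$ is bounded below. You have not shown this: $W$ is the Riesz--Neumann potential of $f=o(|y|^{-\gamma})$ with $\gamma=\mu+2\sigma\in(2\sigma,n)$, so a priori $W(X)$ is only $o(|X|^{-\mu})$ near the origin, and in particular may be unbounded above. Consequently $V=U-W$ may tend to $-\infty$ along some sequence $X\to 0$, and no additive constant cures this. What you actually need is a \emph{signed} removable-singularity theorem for the degenerate operator (e.g.\ via even reflection and capacity arguments for $A_2$ weights), not the nonnegative B\^ocher theorem you invoke.

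\textbf{Gap 2: the bootstrap does not close.} Granting the B\^ocher step, you have $u=b+w$ with $w=o(|x|^{-\mu})$, and you iterate the bound $u\le C+\epsilon|x|^{-\mu}$ through $f$. Tracking constants, one step yields
\[
u \;\le\; \bigl(L + L'\,C^{\,p}\bigr) \;+\; A'\,\epsilon^{\,p}\,|x|^{-\mu}
\]
on the same neighborhood, where $L,L',A'>0$ are fixed structural constants (coming from $b$, the exterior contribution of $f$, and the Riesz convolution with $|y|^\alpha$ and $|y|^{-\gamma}$ respectively). The singular coefficient does contract since $p>1$, but the bounded part obeys the recursion $C_{k+1}=L+L'C_k^{\,p}$, which diverges unless $L,L'$ happen to be small enough to admit a fixed point. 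Nothing in your setup forces this smallness. Put differently: the hypothesis $u=o(|x|^{-\mu})$ gives exactly the \emph{borderline} rate $|x|^\alpha u^{p-1}=o(|x|^{-2\sigma})$, which is not in $L^q$ for any $q>n/(2\sigma)$; your iteration never crosses this threshold. The phrase ``finitely many times drives the singular coefficient to zero'' is therefore unsupported --- the coefficient tends to zero only in the limit, and meanwhile the regular part may explode.

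\textbf{What the paper does instead.} The paper bypasses both issues with a single comparison argument. It builds an explicit supersolution $\Psi=\epsilon\Psi_{\mu}+C\Psi_\tau$ (with $\Psi_\nu(X)=|X|^{-\nu}(1-\delta(t/|X|)^{2\sigma})$) for any fixed $\tau\in(0,\mu)$, applies the maximum principle on $\mathcal{B}_{r_0}^+\setminus\overline{\mathcal{B}_{r_\epsilon}^+}$, and sends $\epsilon\to 0$ to obtain $U(X)\le C|X|^{-\tau}$. This is strictly stronger than $o(|X|^{-\mu})$: it immediately gives $|x|^\alpha u^{p-1}\le C|x|^{\alpha-\tau(p-1)}$ with $\tau(p-1)-\alpha<2\sigma$, hence $|x|^\alpha u^{p-1}\in L^q(B_{1/2})$ for some $q>n/(2\sigma)$. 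A cutoff argument then upgrades $U$ to a weak solution across the origin, and Proposition~2.6 of \cite{J-L-X} gives H\"older continuity. No B\^ocher theorem, no Green's splitting, no bootstrap is needed.
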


\bp
By the Harnack inequality \eqref{Har01},  we have 
\be\label{ULim12}
\lim_{|X|\to 0} |X|^{\frac{2\sg +\al}{p-1}} U(X) =0. 
\ee
For any $0 < \mu < n-2\sg$ and $0 < \delta < \frac{1}{2}$,  as in \cite{C-J-S-X},  we define 
$$
\Psi_\mu(X) := |X|^{-\mu} \left( 1 -\delta \left( \frac{t}{|X|}  \right)^{2\sg}  \right),~~~~~ 
$$
where $X =(x, t) \neq 0$. Then $\Psi_\mu$ satisfies  
$$
\begin{cases}
 -\textmd{div} (t^{1-2\sg} \nabla \Psi_\mu(X))= t^{1-2\sg} |X|^{-(\mu +2)} \left( \mu(n -2\sg -\mu) - \frac{\delta(\mu + 2\sg)(n-\mu) t^{2\sg}}{|X|^{2\sg}} \right),\\
-\lim_{t \to 0} t^{1-2\sg} \partial_t \Psi_\mu (x, t) = 2\sg \delta |x|^{- 2\sg} \Psi_\mu(x, 0),~~~~~~ x \neq 0. 
\end{cases}
$$
Let $\mu_0=\frac{2\sg +\al}{p-1}$ and $\tau \in (0, \frac{2\sg +\al}{p-1})$ be fixed. Note that $ 0 < \frac{2\sg +\al}{p-1} < n-2\sg$ due to $-2\sg < \al$ and $\frac{n+\al}{n-2\sg} <p$. Let
$$
\Psi = \epsilon \Psi_{\mu_0} + C \Psi_\tau, 
$$
where $\epsilon, C$ are positive constants. Then we can choose small  $\delta =\delta(\tau, \al, \sg, p, n) \in (0, \frac{1}{2})$  such that
$$
\begin{cases}
 -\textmd{div} (t^{1-2\sg} \nabla \Psi ) \geq 0 ~~~~~~~  & \textmd{in} ~ \mathcal{B}_1^+,\\
\frac{\partial \Psi}{\partial \nu^\sg} (x, 0) = 2\sg \delta |x|^{- 2\sg} \Psi(x, 0) ~~~~~~~~& \textmd{on} ~ \partial^0\mathcal{B}_1^+ \backslash  \{0\}. 
\end{cases}
$$
Let $a(x):=\kappa_\sg |x|^\al u^{p-1}(x)$. By the assumption we have $\lim_{|x|\to 0} a(x)|x|^{2\sg} =0$.  Hence, there exists $r_0 \in (0, 1)$ such that
$$
a(x) \leq 2\sg \delta |x|^{-2\sg} ~~~~~~~~ \textmd{for} ~ 0< |x| \leq r_0. 
$$
Thus, we have  
$$
\begin{cases}
 -\textmd{div} (t^{1-2\sg} \nabla ( \Psi - U) ) \geq 0 ~~~~~~~  & \textmd{in} ~ \mathcal{B}_{r_0}^+,\\
\frac{\partial ( \Psi -U )}{\partial \nu^\sg} (x, 0) \geq  2\sg \delta |x|^{- 2\sg} ( \Psi(x, 0)- U(x, 0))  ~~~~~~~~& \textmd{on} ~ \partial^0\mathcal{B}_{r_0}^+ \backslash  \{0\}. 
\end{cases}
$$
Furthermore, we note that
$$
\Psi(X) \geq \frac{\epsilon}{2} |X|^{-\frac{2\sg +\al}{p-1}}~~~~~~~ \textmd{for}  ~ X \in \mathcal{B}_1^+ \backslash \{0\}. 
$$
Hence,  for any $\epsilon > 0$, by \eqref{ULim12} there exists $r_\epsilon > 0$ small such that
$$
\Psi \geq U ~~~~~~~~~~ \textmd{in}  ~~  \overline{\mathcal{B}_{r_\epsilon}^+} \backslash \{0\}.
$$
On the other hand, we can choose  $C=C(\tau, r_0, U) $ sufficiently large so  that 
$$
\Psi \geq U ~~~~~~~~~ \textmd{on}  ~~  \partial^+\mathcal{B}_{r_0}^+. 
$$
The maximum principle gives that
$$
\Psi \geq U ~~~~~~~~~~ \textmd{in}  ~~  \overline{\mathcal{B}_{r_0}^+} \backslash \{0\}.
$$
Letting  $\epsilon \to 0$, we have 
\be\label{Int8910}
U(X) \leq C(\tau, r_0, U) \Psi_\tau (X) \leq C(\tau, r_0, U) |X|^{-\tau}  ~~~~~~~~~~ \textmd{in}  ~~  \overline{\mathcal{B}_{r_0}^+} \backslash \{0\}.
\ee
By standard rescaling arguments and Proposition 2.19 in \cite{J-L-X}, we obtain 
\be\label{Int8920}
|\nabla_x U(X)| \leq C(\tau, r_0, U)  |X|^{-\tau -1}  ~~~~~~~~~~ \textmd{in}  ~~ \mathcal{B}_{r_0/2}^+  \backslash \{0\}
\ee
and 
\be\label{Int8930}
|t^{1-2\sg }\partial_t U(X)| \leq C(\tau, r_0, U)  |X|^{-\tau -2\sg}  ~~~~~~~~~~ \textmd{in}  ~~ \mathcal{B}_{r_0/2}^+  \backslash \{0\}. 
\ee
Since $\tau \in (0, \frac{2\sg +\al}{p-1})$ is arbitrary, it is not difficult to verify that $U \in W^{1, 2}(t^{1-2\sg} ,\mathcal{B}_1^+)$. Next we will prove that  $U$ is a nonnegative weak solution of 
\be\label{W-weak}
\begin{cases}
-\textmd{div}(t^{1-2\sg}  \nabla U)=0~~~~~~~~~& \textmd{in} ~ \mathcal{B}_1^+,\\
\frac{\partial U}{\partial \nu^\sg}(x, 0)=\kappa_\sg |x|^\al U^p(x, 0)~~~~~~~~& \textmd{on} ~ \partial^0\mathcal{B}_1^+.
\end{cases}
\ee
In fact, for $\epsilon >0$ small, let $\eta_\epsilon \in C^\infty(\mathbb{R}^{n+1})$ be a cut-off  function satisfying
$$\eta_\epsilon (X) =
\begin{cases}
0~~~~~~~~~~ \textmd{for} ~ |X|\leq \epsilon,\\ 
1~~~~~~~~~~ \textmd{for} ~ |X|\geq  2\epsilon 
\end{cases}
$$
and
$$
|\nabla  \eta_\epsilon(X)| \leq C \epsilon^{-1}.
$$
For any $\psi \in C_c^\infty\left((\mathcal{B}_1^+  \cup \partial^0\mathcal{B}_1^+) \right)$, using $\psi\eta_\epsilon$ as a test function in \eqref{Solu} gives 
\be\label{Test069}
\int_{\mathcal{B}_1^+} t^{1-2\sg} \nabla U \cdot\nabla (\psi\eta_\epsilon)= \kappa_\sg \int_{B_1} |x|^\al u^p(x) \psi\eta_\epsilon. 
\ee
But
$$
\aligned
\left| \int_{\mathcal{B}_1^+} t^{1-2\sg} \psi \nabla U \cdot\nabla \eta_\epsilon  \right| & \leq C \epsilon^{-1} \left( \int_{\mathcal{B}_{2\epsilon}^+ \backslash \mathcal{B}_\epsilon^+}  t^{1-2\sg} |\nabla U|^2 \right)^{1/2} \left( \int_{\mathcal{B}_{2\epsilon}^+ \backslash \mathcal{B}_\epsilon^+}  t^{1-2\sg}\right)^{1/2}\\
& \leq C \epsilon^{\frac{n- 2\sg}{2}}  \left( \int_{\mathcal{B}_{2\epsilon}^+ \backslash \mathcal{B}_\epsilon^+}  t^{1-2\sg} |\nabla U|^2 \right)^{1/2} \to 0~~~~~~~ \textmd{as} ~ \epsilon \to 0. 
\endaligned
$$
By  \eqref{Int8910}  and $\al > -2\sg$,  we have $|\cdot|^\al u^p \in L_{loc}^1(B_1)$. Letting $\epsilon \to 0$ in \eqref{Test069}, we get 
$$
\int_{\mathcal{B}_1^+} t^{1-2\sg} \nabla U \cdot\nabla \psi= \kappa_\sg \int_{B_1} |x|^\al u^p(x) \psi.  
$$
Hence  $U$ is a nonnegative weak solution of \eqref{W-weak}.  Again,  by \eqref{Int8910} and $\al >-2\sg$, we obtain  
$$
|\cdot|^\al u^{p-1}  \in L^{q}(B_{1/2}) 
$$
for some $q> \frac{n}{2\sg}$. It follows from Proposition 2.6 in \cite{J-L-X} that $U$ is H\"older continuous in $\overline{\mathcal{B}_{1/2}^+}$. 
\ep

\vskip0.10in

\noindent{\bf Proof of Theorem \ref{C-T2-2020}.} The proof of Theorem \ref{C-T2-2020} is  now  just a combination of  Harnack inequality in Lemma \ref{LM-02},   Propositions \ref{SDE}, \ref{P303} and \ref{Re089}.   
\hfill$\square$

\vskip0.10in

\noindent{\bf Proof of Theorem \ref{C-T2}.} It follows from the extension theorem of Caffarelli-Silvestre \cite{C-S} and Theorem \ref{C-T2-2020}.
\hfill$\square$

\section{Precise Asymptotic Behavior} 
In this section,  we  prove Theorem \ref{AB-T1-2020},   Theorem \ref{AB-T1} and Theorem \ref{unique-2020}. We begin by showing the boundedness of the energy integral  $E(r; U)$ defined in  \eqref{EI000}.      

\begin{proposition}\label{4Le01}

Let $n \geq 2$,  $-2\sg < \al < 2\sg$ and $\frac{n+\al}{n-2\sg} < p < \frac{n+2\sg}{n-2\sg}$. 
Assume  that $U$ is a nonnegative weak solution of \eqref{Iso3} ({\it resp. } of \eqref{Iso3E}).  Then  $E(r; U)$ is uniformly bounded  in $r \in (0, \frac{1}{8})$ ({\it resp.} in $r \in (8, \infty)$).  Further, the limit 
$$
\lim_{r \to 0^+} E(r; U) ~~ ({\it resp. \lim_{r \to +\infty} E(r; U)})
$$
exists and it is finite. 
\end{proposition} 

\begin{proof}
Suppose  $U$ is a nonnegative weak solution of \eqref{Iso3}.   For any $r\in (0, \frac{1}{8})$,  define  
$$
V(X)=r^{\frac{2\sg +\al}{p-1}}U(rX), ~~~~~~~ \frac{1}{2} \leq   |X| \leq  2. 
$$
 Then $V$ satisfies
$$
\begin{cases}
-\textmd{div}(t^{1-2\sg}  \nabla V)=0~~~~~~~~~& \textmd{in} ~\mathcal{B}_2^+ \backslash \overline{\mathcal{B}_{1/2}^+},\\
\frac{\partial V}{\partial \nu^\sg}(x, 0)= \kappa_\sg |x|^{\al} v^p(x)~~~~~~~~& \textmd{on} ~B_2 \backslash \overline{B}_{1/2},\\
\end{cases}
$$
where $v(x)=V(x, 0)$. It follows from Proposition \ref{SDE} and Lemma \ref{LM-02} that 
$$
|V(X)| \leq C ~~~~~~~~ \textmd{for} ~ \textmd{all} ~ \frac{1}{2} \leq |X| \leq 2,
$$
where $C$ is a positive constant depending only on $n, p, \sg$ and $\al$.   By Proposition 2.19 in \cite{J-L-X}, we have
$$
\sup_{\frac{3}{4} \leq |X| \leq \frac{3}{2}} |\nabla_x V| + \sup_{\frac{3}{4} \leq |X| \leq \frac{3}{2}} | t^{1-2\sg}\partial_t V| \leq C. 
$$
Hence, there exists $C>0$ depending only on $n, p, \sg$ and $\al$  such that
$$
|\nabla_x U (X)|\leq C |X|^{-\frac{2\sg}{p-1} -1}~~~~~~~~ \textmd{in} ~ \mathcal{B}_{1/8}^+ \backslash \{0\}
$$
and 
$$
|t^{1-2\sg}\partial_t U(X)|\leq C |X|^{-\frac{2\sg}{p-1} -2\sg}~~~~~~~~ \textmd{in} ~ \mathcal{B}_{1/8}^+ \backslash \{0\}. 
$$
Thus, a direct computation gives 
$$
r^{\frac{2(p+1)\sg + 2\al}{p-1}-n+1} \int_{\partial^+ \mathcal{B}_r^+} t^{1-2\sg} |\nabla U| \leq  C, 
$$
$$
r^{\frac{2(p+1)\sg +2\al}{p-1}-n-1} \int_{\partial^+ \mathcal{B}_r^+} t^{1-2\sg} U^2 \leq C, 
$$
$$
r^{\frac{(p+1)(2\sg +\al)}{p-1}-n+1}\int_{\partial B_r} u^{p+1} \leq C,
$$
where  $C$ is a positive constant  depending only on $n, p, \sg$ and  $\al$. Now  we easily conclude that $E(r; U)$ is uniformly bounded in $r \in (0, \frac{1}{8})$. By the monotonicity of $E(r; U)$, we obtain that the limit 
$$\lim_{r\rightarrow 0^+} E(r; U)$$
exists and is finite.  

\vskip0.10in

Similarly, let  $U$ be a nonnegative weak solution of \eqref{Iso3E}, we can prove that $E(r; U)$ is uniformly bounded in $r\in  (8, \infty)$,  and then the limit  
$\lim_{r\rightarrow +\infty} E(r; U)$
exists and  is finite.   
\end{proof}

Next,  we show an uniqueness result  for  a degenerate elliptic equation on  $\mathbb{S}^{n}_+$. 
\begin{proposition}\label{4Le02}
Assume $n\geq 2$, $\sg \in (0, 1)$, $-2\sg < \al < 2\sg$ and $p > \frac{n+\al}{n-2\sg}$. Let $\varphi \in  C^2(\mathbb{S}^{n}_+) \cap C(\overline{\mathbb{S}^{n}_+}) $ be a solution of 
\begin{equation}\label{Hom01-01-999} 
\begin{cases}
- \theta_1^{2\sg-1} \text{div}_\theta (\theta_1^{1-2\sg} \nabla_\theta \varphi) + J_2 \varphi =0~~~~~~~~~& \textmd{on} ~\mathbb{S}^n_+ ,\\
-\lim_{\theta_1\rightarrow 0^+} \theta_1^{1-2\sg} \partial_{\theta_1} \varphi = \kappa_\sg (\varphi)^p~~~~~~~~& \textmd{on} ~ \partial \mathbb{S}^n_+,\\
\end{cases}
\end{equation}
where 
$$
J_2=\frac{2\sg +\al}{p-1}\left(n-2\sg-\frac{2\sg + \al}{p-1}\right).
$$
If $\varphi \equiv c$  on $\partial \mathbb{S}^n_+$ for some positive constant $c$,  then necessarily $c=C_{p,\sg,\al}$ and 
$$
\varphi \equiv  \omega_\alpha ~~~~~~~ \textmd{on} ~\mathbb{S}^n_+, 
$$
where $C_{p,\sg,\al}$ is given by \eqref{A},    $\omega_\alpha \in C^2(\mathbb{S}^{n}_+) \cap C(\overline{\mathbb{S}^{n}_+}) $ is the restriction of $V_\alpha$ to $\overline{\mathbb{S}^n_+}$,  and $V_\alpha$ is the Caffarelli-Silvestre extension of the function $v_\alpha=C_{p,\sg,\al}|x|^{-\frac{2\sg +\al}{p-1}}$, as defined  in \eqref{Extension}.     
\end{proposition} 
\begin{proof}
For $x \in \mathbb{R}^n \backslash \{0\} $, let $v_\alpha(x)=C_{p,\sg,\al} |x|^{-\frac{2\sg +\al}{p-1}}$ where $C_{p,\sg,\al}$ is given by \eqref{A}.  Then by Lemma 3.1 in Fall \cite{Fall}, we know  that 
$$
(-\Delta)^\sg v_\alpha(x) =  |x|^{\al} (v_\alpha)^p(x)~~~~~~~~~ \textmd{in} ~ \mathbb{R}^n \backslash \{0\},
$$  
Let $V_\alpha$ be the Caffarelli-Silvestre extension of $v_\alpha$, that is,
$$
V_\alpha(x, t) =\int_{\mathbb{R}^n} P_\sigma(x-y, t) v_\alpha(y) dy ~~~~~~ \textmd{for} ~(x, t) \in \mathbb{R}^{n+1}_+. 
$$
Then we have 
$$
\begin{cases}
-\textmd{div}(t^{1-2\sg}  \nabla V_\alpha)=0~~~~~~~~~& \textmd{in} ~ \mathbb{R}^{n+1}_+,\\
\frac{\partial V_\alpha}{\partial \nu^\sg}(x, 0)=\kappa_\sg |x|^\al  (v_\alpha)^p(x)~~~~~~~~& \textmd{on} ~ \partial^0\mathbb{R}^{n+1}_+  \backslash  \{0\}. 
\end{cases}
$$
It is easy to check that $V_\alpha$ is a homogeneous function.  Setting  $ \omega_\alpha = V_\alpha \Big|_{ \overline{\mathbb{S}^n_+} }$.  
Then, $\omega_\alpha \equiv C_{p,\sigma,\alpha}$ on $\partial \mathbb{S}^n_+$  and for $X=(x, t)=r\theta \in \mathbb{R}^{n+1}_+ \backslash \{0\}$,
$$
V_\alpha(X) =r^{-\frac{2\sg +\al}{p-1}} \omega_\alpha(\theta). 
$$
A  direct calculation shows that $\omega_\alpha$ also satisfies \eqref{Hom01-01-999}. 
Define $\omega= \varphi - \left( \frac{c}{C_{p,\sg,\al}} \right)^p  \omega_\alpha$.  Then $\omega$ satisfies 
\begin{equation}\label{00-88-26}
\begin{cases}
- \theta_1^{2\sg-1} \text{div}_\theta (\theta_1^{1-2\sg} \nabla_\theta \omega) + J_2 \omega =0~~~~~~~~~& \textmd{on} ~\mathbb{S}^n_+ ,\\
-\lim_{\theta_1\rightarrow 0^+} \theta_1^{1-2\sg} \partial_{\theta_1} \omega = 0~~~~~~~~& \textmd{on} ~ \partial \mathbb{S}^n_+. \\
\end{cases}
\end{equation}
Multiplying \eqref{00-88-26} by $\omega$ and integrating, we obtain
$$
\int_{\mathbb{S}^n_+} \theta_1^{1-2\sigma} |\nabla \omega|^2 + J_2 \int_{\mathbb{S}^n_+} \theta_1^{1-2\sigma} \omega^2 =0. 
$$
Note that $J_2>0$ because of $\alpha > -2\sigma$ and $p>\frac{n+\alpha}{n-2\sigma}$.  Thus, the above equality  leads to  $\omega \equiv0$ on $\mathbb{S}^n_+$,  and hence $\omega \equiv0$ on $\overline{\mathbb{S}^n_+}$. The proposition  follows immediately.  
\end{proof}

\vskip0.10in

\noindent{\bf Proof of Theorem \ref{AB-T1-2020}.} 
Suppose that $U$ is  a nonnegative weak solution of \eqref{Iso3-2020} and the origin $0$  is a non-removable singularity, we only need to  establish  \eqref{Beh-0D-2020}.  We consider separately the subcritical case $p < p_S(\al)$ and the supercritical case $p>p_S(\al)$.   

\vskip0.10in

{\bf Case 1:  $p < p_S(\al)$}. 
We define the scaling 
$$
U^\lambda(X)=\lambda^{\frac{2\sg +\al}{p-1}} U(\lambda X). 
$$
Then $U^\lambda$ satisfies
$$
\begin{cases}
-\textmd{div}(t^{1-2\sg}  \nabla U^\lambda)=0~~~~~~~~~& \textmd{in} ~ \mathcal{B}_{1/\lambda}^+,\\
\frac{\partial U^\lambda}{\partial \nu^\sg}(x, 0)= \kappa_\sg |x|^{\al} (U^\lambda(x, 0))^p~~~~~~~~& \textmd{on} ~ \partial^0\mathcal{B}_{1/\lambda}^+ \backslash  \{0\}. 
\end{cases}
$$
Since $0$ is a non-removable singularity,  by Theorem \ref{C-T2-2020}  there exist $C_1, C_2 >0$ such that 
\begin{equation}\label{TA01-01}
C_1 |X|^{-\frac{2\sg +\al}{p-1}}\leq U^\lambda (X) \leq C_2 |X|^{-\frac{2\sg +\al}{p-1}}~~~~~\textmd{in} ~ \overline{\mathcal{B}_{1/(2\lambda)}^+}  \backslash  \{0\}. 
\end{equation}
Thus, $U^\lambda$ is locally uniformly bounded away from the origin. It follows from Corollary 2.10 and Theorem 2.15  in \cite{J-L-X}  that  there exists $\gamma >0$ such that for every $R > 1 > r > 0$ 
$$
\|U^\lambda \|_{W^{1,2}(t^{1-2\sg}, \mathcal{B}_R^+ \backslash \overline{\mathcal{B}}_r^+)} + \|U^\lambda\|_{C^\gamma (\mathcal{B}_R^+ \backslash \overline{\mathcal{B}}_r^+)} + \|u^\lambda\|_{C^{2,\gamma}(B_R \backslash B_r)} \leq C(R, r),
$$
where $u^\lambda(x)=U^\lambda(x, 0)$ and $C(R, r)$ is independent of $\lambda$.  Then there is a subsequence $\lambda_k$ of $\lambda \rightarrow 0$  such that $\{U^{\lambda_k}\}$ converges to a nonnegative function $U^0  \in W_{loc}^{1,2}(t^{1-2\sg}, \overline{\mathbb{R}^{n+1}_+} \backslash \{0\}) \cap C^\gamma_{loc}(\overline{\mathbb{R}^{n+1}_+} \backslash \{0\})$ satisfying 
$$
\begin{cases}
-\textmd{div}(t^{1-2\sg}  \nabla U^0)=0~~~~~~~~~& \textmd{in} ~\mathbb{R}^{n+1}_+,\\
\frac{\partial U^0}{\partial \nu^\sg}(x, 0)= \kappa_\sg |x|^\al (U^0(x,0))^p~~~~~~~~& \textmd{on} ~\mathbb{R}^n \backslash \{0\}. 
\end{cases}
$$
By \eqref{TA01-01}  we have 
\begin{equation}\label{TA02-02}
C_1 |X|^{-\frac{2\sg +\al}{p-1}} \leq U^0(X) \leq C_2 |X|^{-\frac{2\sg +\al}{p-1}}~~~~~~\textmd{in} ~\overline{\mathbb{R}^{n+1}_+} \backslash \{0\}. 
\end{equation}
Moreover, by the scaling invariance of  $E(r; U)$ and Proposition  \ref{4Le01}, we have for any $r>0$ that 
$$
E(r; U^0)= \lim_{k\rightarrow \infty}E(r; U^{\lambda_k})  = \lim_{k\rightarrow \infty}E(r\lambda_k; U)=E(0^+; U).
$$
That is, $E(r; U^0)$ is a constant. It follows from  Proposition \ref{P302} that $U^0$ is homogeneous of degree $-\frac{2\sg +\al}{p-1}$. Hence, there exists $\varphi^0  \in C^2(\mathbb{S}^{n}_+) \cap C(\overline{\mathbb{S}^{n}_+}) $ such that
\begin{equation}\label{U0-f037}
U^0(X)= r^{-\frac{2\sg +\al}{p-1}} \varphi^0(\theta), 
\end{equation} 
where $X=(x ,t) =r \theta$ with $r=|X|$ and $\theta=\frac{X}{|X|}$.  A calculation  similar to the proof of Proposition \ref{P302} shows that  $\varphi^0$ satisfies 
\begin{equation}\label{Hom01-01}
\begin{cases}
- \theta_1^{2\sg-1} \text{div}_\theta (\theta_1^{1-2\sg} \nabla_\theta \varphi^0) + J_2 \varphi^0 =0~~~~~~~~~& \textmd{on} ~\mathbb{S}^n_+ ,\\
-\lim_{\theta_1\rightarrow 0^+} \theta_1^{1-2\sg} \partial_{\theta_1} \varphi^0 = \kappa_\sg (\varphi^0)^p~~~~~~~~& \textmd{on} ~ \partial \mathbb{S}^n_+,\\
\end{cases}
\end{equation}
where $\theta_1=\frac{t}{|X|}$ denotes the component of $\theta$ in the $t$ direction and  
$$
J_2=\frac{2\sg +\al}{p-1}\left(n-2\sg-\frac{2\sg + \al}{p-1}\right).
$$
By \eqref{TA02-02},    $\varphi^0$ also satisfies 
$$
0 < C_1 \leq \varphi^0(\theta) \leq C_2~~~~~~~~~~~ \textmd{on} ~~ \overline{\mathbb{S}^n_+}. 
$$
On the other hand, since $p < p_S(\al)$, from  Theorem 1.1 in \cite{LB2} we know that $U^0(x, t)$ is cylindrically  symmetric about  the origin.  In particular, the trace $u^0(x):=U^0(x, 0)$ is radially symmetric about the  origin.  Hence,  $\varphi^0$  is a positive constant on $\partial \mathbb{S}^n_+$.     
By Proposition \ref{4Le02}  we have 
$$
\varphi^0   \equiv  \omega_\alpha ~~~~~~~ \textmd{on} ~\overline{\mathbb{S}^n_+},  
$$
where $\omega_\alpha$ is defined as in Proposition  \ref{4Le02}.   It follows from the form \eqref{U0-f037} of $U^0$ and the proof of Proposition  \ref{4Le02} that 
$$
U^0(x, t) =C_{p,\sg,\al} \int_{\mathbb{R}^n} P_\sigma(x-y, t)  |y|^{-\frac{2\sg +\al}{p-1}} dy ~~~~~~ \textmd{for} ~(x, t) \in \mathbb{R}^{n+1}_+, 
$$
where $C_{p,\sg,\al}$ is given by \eqref{A}. 
Since the limiting function $U^0(x, t)$ is unique, we conclude that $U^\lambda(x, t)\rightarrow U^0(x, t)$ for any sequence $\lambda\rightarrow 0$ in $C^{\gamma/2}_{loc}(\overline{\mathbb{R}^{n+1}_+} \backslash \{0\})$.  In  particular,  
$$
|\lambda X|^{\frac{2\sg +\al}{p-1}} U(\lambda X)= |X|^{\frac{2\sg +\al }{p-1}} U^\lambda(X) \rightarrow U^0(X)~~~~~ \textmd{as}~~ \lambda \rightarrow 0
$$
uniformly for  $X \in \overline{\mathbb{S}^{n}_+}$.  This  immediately implies  that \eqref{Beh-0D-2020}  holds.

\vskip0.10in

{\bf Case 2:  $p_S(\al)  < p \leq \frac{n+2\sg+\al}{n-2\sg}$}.  
We consider the Kelvin transform 
$$
\widetilde{U} (Y)  =  \left( \frac{1}{|Y|} \right)^{n-2\sg} U\left( \frac{Y}{|Y|^2} \right)
$$
for $|Y|>1$. 
Then $\widetilde{U}$ satisfies 
\begin{equation}\label{Kel401}
\begin{cases}
-\textmd{div}(t^{1-2\sg}  \nabla \widetilde{U})=0~~~~~~~~~& \textmd{in} ~ \mathbb{R}_{+}^{n+1} \backslash \overline{\mathcal{B}_1^+},\\
\frac{\partial \widetilde{U}}{\partial \nu^\sg}(y, 0)=\kappa_\sg |y|^\vartheta  \widetilde{U}^p(y, 0)~~~~~~~~& \textmd{on} ~ B_1^c, 
\end{cases}
\end{equation}
where $\vartheta :=p(n-2\sg) - (n+2\sg+\al)$ and $B_1^c =\{x \in \mathbb{R}^n : |x| >1 \}$.  Using Theorem \ref{C-T2-2020}  we have 
\begin{equation}\label{Ubar01}
\frac{C_1}{|Y|^{(2\sg +\vartheta)/(p-1)}} \leq \widetilde{U} (Y) \leq \frac{C_2}{|Y|^{(2\sg +\vartheta)/(p-1)}} ~~~~~~~ \textmd{for} ~ |Y| ~\textmd{large}. 
\end{equation}
Note that 
$$
-2 \sg < \vartheta \leq  0
$$
due to $\frac{n+\al}{n -2\sg} < p$ and $p \leq \frac{n+2\sg +\al}{n-2\sg}$. 
Moreover, 
$$
\aligned
p > \frac{n + \vartheta}{n -2\sg} ~~~~~ & \Leftrightarrow ~~~~~ \al > -2\sg, \\
p< \frac{n+2\sg +2\vartheta}{n-2\sg}  ~~~~~ & \Leftrightarrow ~~~~~ p> \frac{n+2\sg+2\al}{n-2\sg}. 
\endaligned
$$
Therefore, after performing the Kelvin transform,   the new exponent $\vartheta$ satisfies  
$$
-2 \sg < \vartheta \leq 0 ~~~~~~~~ \textmd{and} ~~~~~~~~\frac{n+\vartheta}{n-2\sg} < p < \frac{n+2\sg +2\vartheta}{n-2\sg}. 
$$
For any $\lambda>0$, define 
$$
\widetilde{U}^\lambda(Y)=\lambda^{\frac{2\sg +\vartheta}{p-1}} \widetilde{U}(\lambda Y). 
$$
Then $\widetilde{U}^\lambda$ satisfies
$$
\begin{cases}
-\textmd{div}(t^{1-2\sg}  \nabla \widetilde{U}^\lambda)=0~~~~~~~~~& \textmd{in} ~ \mathbb{R}_{+}^{n+1} \backslash \overline{\mathcal{B}_{1/\lambda}^+},\\
\frac{\partial \widetilde{U}^\lambda}{\partial \nu^\sg}(y, 0)= \kappa_\sg |y|^{\vartheta} (\widetilde{U}^\lambda(y, 0))^p~~~~~~~~& \textmd{on} ~ 
\mathbb{R}^n \backslash  B_{1/\lambda}. 
\end{cases}
$$
By \eqref{Ubar01}, 
\begin{equation}\label{Ubar02}
C_1 |Y|^{-\frac{2\sg +\vartheta}{p-1}}\leq \widetilde{U}^\lambda (X) \leq C_2 |Y|^{-\frac{2\sg +\vartheta}{p-1}}~~~~~\textmd{in} ~ \mathbb{R}_{+}^{n+1} \backslash \overline{\mathcal{B}_{2/\lambda}^+}. 
\end{equation}
It follows from  Corollary 2.10 and Theorem 2.15  in \cite{J-L-X} that  there exists $\gamma >0$ such that for every $R > 1 > r > 0$, 
$$
\|\widetilde{U}^\lambda \|_{W^{1,2}(t^{1-2\sg}, \mathcal{B}_R^+ \backslash \overline{\mathcal{B}}_r^+)} + \|\widetilde{U}^\lambda\|_{C^\gamma (\mathcal{B}_R^+ \backslash \overline{\mathcal{B}}_r^+)} + \|\widetilde{u}^\lambda\|_{C^{2,\gamma}(B_R \backslash B_r)} \leq C(R, r),
$$
where $\widetilde{u}^\lambda(y) := \widetilde{U}^\lambda(y, 0)$ and $C(R, r)$ is independent of $\lambda$.  Then there is a subsequence $\lambda_k$ of $\lambda \rightarrow +\infty$  such that $\{U^{\lambda_k}\}$ converges to a nonnegative function $\widetilde{U}^\infty  \in W_{loc}^{1,2}(t^{1-2\sg}, \overline{\mathbb{R}^{n+1}_+} \backslash \{0\}) \cap C^\gamma_{loc}(\overline{\mathbb{R}^{n+1}_+} \backslash \{0\})$ satisfying 
$$
\begin{cases}
-\textmd{div}(t^{1-2\sg}  \nabla \widetilde{U}^\infty)=0~~~~~~~~~& \textmd{in} ~\mathbb{R}^{n+1}_+,\\
\frac{\partial \widetilde{U}^\infty}{\partial \nu^\sg}(y, 0)= \kappa_\sg |y|^\vartheta (\widetilde{U}^\infty(y ,0))^p~~~~~~~~& \textmd{on} ~\mathbb{R}^n \backslash \{0\}. 
\end{cases}
$$
By \eqref{Ubar02}   we have 
\begin{equation}\label{TA02-02-02}
C_1 |Y|^{-\frac{2\sg + \vartheta}{p-1}} \leq \widetilde{U}^\infty(Y) \leq C_2 |Y|^{-\frac{2\sg +\vartheta}{p-1}}~~~~~~\textmd{in} ~\overline{\mathbb{R}^{n+1}_+} \backslash \{0\}. 
\end{equation}
Moreover, by the scaling invariance of  $E(r; \widetilde{U})$ and Proposition \ref{4Le01}, we have for any $r>0$ that 
$$
E(r; \widetilde{U}^\infty)= \lim_{k\rightarrow \infty}E(r; \widetilde{U}^{\lambda_k})  = \lim_{k\rightarrow \infty}E(r\lambda_k; \widetilde{U})=\lim_{r\to +\infty}E(r; \widetilde{U}).
$$
That is, $E(r; \widetilde{U}^\infty)$ is a constant. It follows from  Proposition \ref{P302} that $\widetilde{U}^\infty$ is homogeneous of degree $-\frac{2\sg +\vartheta}{p-1}$.  Notice that we have  $p < p_S(\vartheta)$,  the same argument as in Case 1 gives  that $\widetilde{U}^\infty$ has the form 
$$
\widetilde{U}^\infty(y, t)=C_{p,\sg,\vartheta} \int_{\mathbb{R}^n} P_\sigma(y-z, t)  |z|^{-\frac{2\sg +\vartheta}{p-1}} dz ~~~~~~ \textmd{for} ~(y, t) \in \mathbb{R}^{n+1}_+, 
$$
where $C_{p,\sg,\vartheta}$ is given by \eqref{A}.  By the uniqueness of the  limiting  function  $\widetilde{U}^\infty$,   we conclude that $\widetilde{U}^\lambda(y, t)\rightarrow \widetilde{U}^\infty(y, t)$ for any sequence $\lambda\rightarrow +\infty$ in $C^\gamma_{loc}(\overline{\mathbb{R}^{n+1}_+} \backslash \{0\})$.  In particular, 
$$
|\lambda Y|^{\frac{2\sg +\vartheta}{p-1}} \widetilde{U}(\lambda Y)= |Y|^{\frac{2\sg +\vartheta}{p-1}} \widetilde{U}^\lambda(Y) \rightarrow \widetilde{U}^\infty(Y)~~~~~ \textmd{as}~~ \lambda \rightarrow +\infty
$$
uniformly for  $X \in \overline{\mathbb{S}^{n}_+}$.   Hence   we have 
$$
\widetilde{U}(Y) = \widetilde{U}^\infty(Y) \left( 1 + o(1) \right) ~~~~~  \textmd{as} ~~  |Y| \to +\infty.
$$
From  \eqref{AAA} we have that  $\Lambda(\tau)=\Lambda(-\tau)$.  Therefore 
$$
C_{p,\sg,\vartheta}=\left\{\Lambda\left(\frac{n-2\sg}{2} - \frac{2\sg + \vartheta}{p-1}\right)\right\}^{\frac{1}{p-1}} =\left\{\Lambda\left(\frac{2\sg + \al}{p-1} - \frac{n-2\sg}{2} \right)\right\}^{\frac{1}{p-1}}  = C_{p,\sg,\alpha}. 
$$
By the definition of  Kelvin transform,    we now easily get that  \eqref{Beh-0D-2020}  holds.  
This completes the proof of Theorem \ref{AB-T1-2020}.  
\hfill$\square$

\vskip0.10in

\noindent{\bf Proof of Theorem \ref{AB-T1}.} It follows from the extension theorem of Caffarelli-Silvestre \cite{C-S} and Theorem \ref{AB-T1-2020}.
\hfill$\square$

\vskip0.10in

Now  we give the proof of the uniqueness of global singular solutions in Theorem \ref{unique-2020}, which is similar to that of Theorem \ref{AB-T1-2020}.  But it is very different from the proof of the uniqueness theorem of Gidas-Spruck \cite{G-S} (See  Theorem 1.4 in \cite{G-S}).   

\vskip0.10in
\noindent{\bf Proof of Theorem \ref{unique-2020}.}
Suppose that $U$ is  a nonnegative weak solution of \eqref{Un-2020-01} and  the two singularities $0$ and $\infty$ of $u(x)$ are non-removable.   By Theorem \ref{C-T2-2020} there exist two positive constants $C_1$ and $C_2$ such that 
\be\label{Uniq-2020-001S}
\frac{C_1}{|X|^{(2\sg + \al)/(p-1)}} \leq U(X) \leq \frac{C_2}{|X|^{(2\sg + \al)/(p-1)}} ~~~~~ \textmd{near}  ~ X=0. 
\ee
Since the singularity at $\infty$ is non-removable, by using the  Kelvin transformation and Theorem \ref{C-T2-2020}, it is not difficult to prove that \eqref{Uniq-2020-001S} also holds near $X= \infty$. Hence we have 
\be\label{Uniq-2020-002S}
\frac{C_1}{|X|^{(2\sg + \al)/(p-1)}} \leq U(X) \leq \frac{C_2}{|X|^{(2\sg + \al)/(p-1)}} ~~~~~ \textmd{for} ~ \textmd{all}  ~ X \in \overline{\mathbb{R}^{n+1}_+}  \backslash \{0\}. 
\ee
Now we consider separately the subcritical case $p < p_S(\al)$ and the supercritical case $p>p_S(\al)$.   

\vskip0.10in

{\bf Case 1:  $p < p_S(\al)$}.  For any $\lambda >0$,  we define the scaling 
$$
U^\lambda(X)=\lambda^{\frac{2\sg +\al}{p-1}} U(\lambda X). 
$$
Then $U^\lambda$ also satisfies \eqref{Un-2020-01} and \eqref{Uniq-2020-002S}. The same arguments as in the proof of Theorem \ref{AB-T1-2020} imply that  there is a subsequence $\lambda_k$ of $\lambda \rightarrow 0$  such that $\{U^{\lambda_k}\}$ converges to a nonnegative function $U^0  \in W_{loc}^{1,2}(t^{1-2\sg}, \overline{\mathbb{R}^{n+1}_+} \backslash \{0\}) \cap C^\gamma_{loc}(\overline{\mathbb{R}^{n+1}_+} \backslash \{0\})$ satisfying \eqref{Un-2020-01} and \eqref{Uniq-2020-002S}.   By the scaling invariance of  $E(r; U)$ and Proposition \ref{4Le01}, we have for any $r>0$ that 
\begin{equation}\label{2020-Sca-011}
E(0^+; U) = \lim_{k\rightarrow \infty}E(r\lambda_k; U) = \lim_{k\rightarrow \infty}  E(r; U^{\lambda_k})  =  E(r; U^0).
\end{equation} 
Similar to the proof of  Theorem \ref{AB-T1-2020}, it follows from  Proposition \ref{P302}  and Proposition \ref{4Le02}  that $U^0$ has  the  explicit  expression 
\begin{equation}\label{20-Ex-01}
U^0(x, t) =C_{p,\sg,\al} \int_{\mathbb{R}^n} P_\sigma(x-y, t)  |y|^{-\frac{2\sg +\al}{p-1}} dy ~~~~~~ \textmd{for} ~(x, t) \in \mathbb{R}^{n+1}_+, 
\end{equation}
where $C_{p,\sg,\al}$ is given by \eqref{A}.  On the other hand, let $\lambda \to +\infty$, there is another subsequence $\lambda_k$ of $\lambda \rightarrow +\infty$  such that $\{U^{\lambda_k}\}$ converges to a nonnegative function $U^\infty  \in W_{loc}^{1,2}(t^{1-2\sg}, \overline{\mathbb{R}^{n+1}_+} \backslash \{0\}) \cap C^\gamma_{loc}(\overline{\mathbb{R}^{n+1}_+} \backslash \{0\})$ satisfying \eqref{Un-2020-01} and \eqref{Uniq-2020-002S}.   By the scaling invariance of  $E(r; U)$ and Proposition \ref{4Le01}, we have for any $r>0$ that 
\begin{equation}\label{2020-Sca-022}
E(\infty; U) = \lim_{k\rightarrow \infty}E(r\lambda_k; U) = \lim_{k\rightarrow \infty}  E(r; U^{\lambda_k})  =  E(r; U^\infty).
\end{equation} 
Similar to the above proof,  we obtain that $U^\infty$ also has  the  explicit  expression \eqref{20-Ex-01}.  
By \eqref{2020-Sca-011} and \eqref{2020-Sca-022} we have
$$
\lim_{r \to 0^+} E(r; U) = \lim_{r \to +\infty} E(r; U) \equiv E(r; U^0). 
$$
This together with the monotonicity of $E(r; U)$ on  $(0, \infty)$ implies  that $E(r; U)$ is a constant.  By a very similar argument as in  the proof of  Theorem \ref{AB-T1-2020},  it follows from Proposition \ref{P302}  and Proposition \ref{4Le02}  that $U(x, t)$ has  the  form \eqref{Un-2020-02}. 

\vskip0.10in

{\bf Case 2:  $p_S(\al)  < p \leq \frac{n+2\sg+\al}{n-2\sg}$}.  
We consider the Kelvin transform 
$$
\widetilde{U} (Y)  =  \left( \frac{1}{|Y|} \right)^{n-2\sg} U\left( \frac{Y}{|Y|^2} \right)
$$
for $Y \in \mathbb{R}^{n+1}_+ \backslash \{0\}$.  Denote $\vartheta :=p(n-2\sg) - (n+2\sg+\al)$.   Then the same arguments as in Case 1 give that $\widetilde{U}$ has the form 
$$
\widetilde{U}(y, t)=C_{p,\sg,\vartheta} \int_{\mathbb{R}^n} P_\sigma(y-z, t)  |z|^{-\frac{2\sg + \vartheta}{p-1}} dz ~~~~~~ \textmd{for} ~(y, t) \in \mathbb{R}^{n+1}_+, 
$$
where $C_{p,\sg,\vartheta}$ is given by \eqref{A}.  A simple calculation shows  that $U(x, t)$ satisfies  \eqref{Un-2020-02}.  The proof of Theorem \ref{unique-2020} is completed.  
\hfill$\square$

\section{Isolated Singularities at Infinity} 
In this section,  we prove Theorems  \ref{C-T3} and \ref{C-T4}.    

\vskip0.10in

\noindent{\bf Proof of Theorem \ref{C-T3}.}
Let $u(x)$ be a nonnegative solution of \eqref{Iny} with $\al >-2\sg$ and $1 < p <\frac{n+2\sg}{n-2\sg}$. We consider  the Kelvin transform
\be\label{K4F0}
\widetilde{u}(y)=\frac{1}{|y|^{n-2\sg}}u(\frac{y}{|y|^2})~~~~~~~~~~~\textmd{for} ~ y \in \mathbb{R}^n \backslash \{0\}.  
\ee
Then $\widetilde{u} \in C^2(B_1 \backslash \{0\}) \cap \mathcal{L}_\sigma(\mathbb{R}^n)$ and $\widetilde{u}$ satisfies 
$$
(-\Delta)^\sg \widetilde{u} =|y|^\varrho \widetilde{u}^p~~~~~~~~~~ \textmd{in} ~ B_1 \backslash \{0\}, 
$$
where $\varrho:=p(n-2\sg) -(n+2\sg +\al)$. 

\vskip0.10in

(1) If $1 < p < \frac{n+\al}{n-2\sg}$, then $\varrho < -2\sg$. By Corollary \ref{2Co1}  we have  $\widetilde{u}(y) \equiv 0$ in $B_1 \backslash \{0\}$,  this  implies that $u(x)\equiv 0$ for $|x| >1$. 

\vskip0.10in 

(2) If $\frac{n+\al}{n-2\sg} < p < \frac{n+2\sg}{n-2\sg}$, then $-2\sg < \varrho < -\al < 2\sg$ and 
$$
\frac{n+\varrho}{n-2\sg} = p -\frac{2\sg +\al}{n-2\sg} < p.
$$
It follows from Theorem \ref{C-T2} that either the singularity at $y=0$ is removable, or there exist $c_1, c_2 >0$ such that 
\be\label{N-R01}
\frac{c_1}{|y|^{(2\sg +\varrho)/(p-1)}} \leq \widetilde{u}(y) \leq  \frac{c_2}{|y|^{(2\sg +\varrho)/(p-1)}}~~~~~~~ \textmd{near} ~ y=0. 
\ee
If the singularity at $y=0$ is removable, then 
$$
u(x) =O\left( \frac{1}{|x|^{n-2\sg}} \right). 
$$
If \eqref{N-R01} holds, then
$$
\frac{c_1}{|x|^{(2\sg +\al)/(p-1)}} \leq u(x) \leq  \frac{c_2}{|x|^{(2\sg +\al)/(p-1)}}~~~~~~~ \textmd{near} ~ x=\infty. 
$$
This completes the proof. 
\hfill$\square$

\vskip0.10in

\noindent{\bf Proof of Theorem \ref{C-T4}.}
Let $u(x)$ be a positive  solution of \eqref{Iny} with $-2\sg < \al \leq 0$, $\frac{n+\al}{n-2\sg} < p  \leq \frac{n+2\sg +\al}{n-2\sg}$ and $p\neq\frac{n+2\sg +2\al}{n-2\sg}$. We define the Kelvin transform $\widetilde{u}(y)$ of $u(x)$ as in \eqref{K4F0}.  Then $\widetilde{u}(y)$ satisfies 
\be\label{KT0098}
(-\Delta)^\sg \widetilde{u} =|y|^\varrho \widetilde{u}^p~~~~~~~~~~ \textmd{in} ~ B_1 \backslash \{0\}, 
\ee
where $\varrho:=p(n-2\sg) -(n+2\sg +\al)$. Note that 
$$
\aligned
-2\sg < \varrho ~~~~~~ & \Leftrightarrow ~~~~~~ \frac{n+\al}{n-2\sg} < p, \\
\varrho \leq 0 ~~~~~~ & \Leftrightarrow ~~~~~~ p \leq \frac{n+2\sg +\al}{n-2\sg},\\
\frac{n+\varrho}{n-2\sg} < p ~~~~~~ & \Leftrightarrow ~~~~~~ -2\sg < \al,\\
p \leq  \frac{n+2\sg + \varrho}{n-2\sg} ~~~~~~ & \Leftrightarrow ~~~~~~ \al \leq 0,\\
p\neq \frac{n+2\sg + 2\varrho}{n-2\sg}  ~~~~~~ & \Leftrightarrow ~~~~~~ p\neq \frac{n+2\sg + 2\al}{n-2\sg}. 
\endaligned
$$
Hence, under the assumptions of Theorem \ref{C-T4}, we have 
$$
-2\sg < \varrho \leq 0, ~~~~~~ \frac{n+\varrho}{n-2\sg} < p \leq \frac{n+2\sg +\varrho}{n-2\sg}~~~~~~ \textmd{and} ~~~~~~p\neq\frac{n+2\sg +2\varrho}{n-2\sg}.
$$
Thus,  Theorem \ref{AB-T1} could  be applied to the equation \eqref{KT0098}  and we obtain that either the singularity near $y=0$ is removable, or  
\be\label{T4Lim0}
\lim_{|y|\to 0} |y|^{\frac{2\sg +\varrho}{p-1}}\widetilde{u}(y)= C_{p, \sg,\varrho}. 
\ee
If the singularity near $y=0$  is removable, then $\widetilde{u}(y)$  can be extended to a continuous function near the origin 0. Hence, there exists $\beta >0$ such that
$$
\lim_{|x| \to \infty} |x|^{n-2\sg}u(x) =\beta.
$$
If \eqref{T4Lim0} holds, then
$$
\lim_{|x| \to \infty} |x|^{\frac{2\sg +\al}{p-1}} u(x)=C_{p,\sg,\al}. 
$$
This completes the proof. 
\hfill$\square$


\begin{thebibliography}{10}

\bibitem{A-W2018} W. Ao, H. Chan, A.  DelaTorre, M.A. Fontelos, M. Gonz\'alez, J. Wei,  
 On higher dimensional  singularities for the fractional  Yamabe problem: a non-local  Mazzeo-Pacard program, {\it Duke Math. J.} {\bf  168} (2019), no. 17,  3297-3411.  


\bibitem{A-WCV} W. Ao, H. Chan, M. Gonz\'alez, J. Wei, Existence of positive weak solutions for fractional Lane-Emden equations
with prescribed singular sets, {\it Calc. Var. Partial Differential Equations,} {\bf 57} (2018), no. 6, Art. 149,  25 pp.

\bibitem{A} P. Aviles,  Local behavior of solutions of some elliptic equations, {\it Comm. Math. Phys.,}  {\bf 108}  (1987) 177-192.


\bibitem{B-V}  M.-F. Bidaut-V\'{e}ron, L. V\'{e}ron,  Nonlinear elliptic equations on compact Riemannian manifolds and asymptotics of Emden equations, {\it Invent. Math.,} {\bf  106}  (3) (1991) 489-539.


\bibitem{BG} J.-P.  Bouchaud,  A. Georges,   Anomalous diffusion in disordered media:  statistical mechanisms, models and physical applications, {\it Phys. Rep.,}  195 (1990)  127-293. 

\bibitem{BL} H. Brezis,  P. L. Lions,   A note on isolated singularities for linear elliptic equations. {\it Adv. in Math.
Suppl. Stud. 7A, } Academic Press, New York and London (1981), 263-266. 

\bibitem{Cab-S} X. Cabr\'e, Y. Sire,  Nonlinear equations for fractional Laplacians, I: Regularity, maximum principles, and Hamiltonian
estimates,  {\it Ann. Inst. H. Poincar\'e Anal. Non Lin\'eaire,}  {\bf  31}  (2014)  23-53.


\bibitem{C-G-S}  L. Caffarelli, B. Gidas, J. Spruck,  Asymptotic symmetry and local behavior of semilinear elliptic equations with critical Sobolev growth,  {\it Comm. Pure Appl. Math.,}  {\bf 42}  (1989) 271-297.


\bibitem{C-J-S-X} L. Caffarelli, T. Jin, Y. Sire, J. Xiong,  Local analysis of solutions of fractional semi-linear elliptic equations with isolated singularities,  {\it Arch. Ration. Mech. Anal.,}  {\bf 213} (1) (2014)  245-268.


\bibitem{C-S}  L. Caffarelli, L. Silvestre,  An extension problem related to the fractional Laplacian,  {\it Comm. Partial Differential Equations,}  {\bf  32} (8) (2007) 1245-1260.

\bibitem{CV} L. Caffarelli,  A. Vasseur,  Drift diffusion equations with fractional diffusion and the quasi-geostrophic equation, {\it  Ann. of Math. (2) } {\bf 171} (2010),  no. 3,  1903-1930. 


\bibitem{Ch-G} S.-Y. A. Chang, M. Gonz\'alez, Fractional Laplacian in conformal geometry, {\it  Adv. Math., } {\bf 226}  (2011) 1410-1432.


\bibitem{C-L2015} Z. Chen, C.-S. Lin, Removable singularity of positive solutions for a critical elliptic system with isolated singularity,  {\it Math. Ann.,} {\bf 363}  (2015)  501-523.


\bibitem{C-L-L} W. Chen, C. Li, Y. Li, A direct method of moving planes for the fractional Laplacian, {\it Adv. Math.,} {\bf 308} (2017) 404-437.


\bibitem{CLO} W. Chen, C. Li, B. Ou, Classification of solutions for an integral equation, {\it Comm. Pure Appl. Math., } {\bf 59}  (2006) 330-343.


\bibitem{D-D-W} J. D\'{a}vila,  L. Dupaigne,  J. Wei,  On the fractional Lane-Emden equation, {\it  Trans. Amer. Math. Soc.,} {\bf 369} (2017) 6087-6104.


\bibitem{D-d-G-W} A. DelaTorre, M. del Pino, M.d.M. Gonz\'alez, J. Wei,  Delaunay-type singular solutions for the fractional Yamabe
problem,  {\it Math.  Ann.,} {\bf 369}  (2017)  597-626.


\bibitem{Fall} M, Fall, Semilinear elliptic equations for the fractional Laplacian with Hardy potential,  arXiv:1109.5530. 

\bibitem{F-F} M. Fall, V. Felli,  Unique continuation property and local asymptotics of solutions to fractional elliptic equations, 
{\it Comm. Partial Differential Equations,}  {\bf  39} (2014) 354-397.


\bibitem{F} R. H. Fowler, Further studies of Emden's and similar differential equations,  {\it Q. J. Math., Oxf. Ser.} {\bf 2}  (1931) 259-288.



\bibitem{G-S} B. Gidas, J. Spruck,  Global and local behavior of positive solutions of nonlinear elliptic equations,
    {\it  Comm. Pure Appl. Math.,} {\bf 34} (1981) 525-598.
    
    
\bibitem{G-M-S} M.d.M. Gonz\'alez, R. Mazzeo, Y. Sire, Singular solutions of fractional order conformal Laplacians,  {\it J. Geom. Anal.,} {\bf 22}  (2012) 845-863.


\bibitem{G-Q} M.d.M. Gonz\'alez, J. Qing, Fractional conformal Laplacians and fractional Yamabe problems,  {\it Anal. PDE,  } {\bf 6} (2013) 1535-1576.


\bibitem{GZ} C.R. Graham,  M.  Zworski,  Scattering matrix in conformal geometry, {\it Invent. Math.,} {\bf  152}  (2003) 89-118. 


\bibitem{J-d-S-X} T. Jin, O.S. de Queiroz, Y. Sire, J. Xiong,  On local behavior of singular positive solutions to nonlocal elliptic
equations,  {\it Calc. Var. Partial Differential Equations,} {\bf 56}  (1) (2017) Art. 9,  25 pp.
    
    
\bibitem{J-L-X} T. Jin, Y.Y. Li, J. Xiong,  On a fractional Nirenberg problem, part I: blow up analysis and compactness of solutions,  {\it J. Eur. Math. Soc.,} {\bf 16} (2014) 1111-1171.


\bibitem{K-M-P-S} N. Korevaar, R. Mazzeo, F. Pacard, R. Schoen,  Refined asymptotics for constant scalar curvature metrics with
isolated singularities,  {\it Invent. Math.,}  {\bf 135}  (2) (1999) 233-272.

\bibitem{Li} C. Li,  Local asymptotic symmetry of singular solutions to nonlinear elliptic equations, {\it Invent. Math.,}  {\bf 123}   (1996) 221-231. 

\bibitem{LB1} Y. Li, J. Bao, Local behavior of solutions to fractional Hardy-H\'enon equations with isolated singularity,  {\it  Ann. Mat.  Pura   Appl. (4),}     {\bf 198}  (2019),    no. 1,  41-59.    

\bibitem{LB2} Y. Li, J. Bao,  Fractional Hardy-H\'enon equations on exterior domains, {\it J. Differential Equations,} {\bf 266} (2019) 1153-1175. 

\bibitem{L} P.-L. Lions,  Isolated singularities in semilinear problems, {\it J. Differential Equations,}  {\bf 38}  (1980) 441-450.

\bibitem{Soup} Q. H. Phan, Ph. Souplet, Liouville-type theorems and bounds of solutions of Hardy-H\'enon equations, {\it  J. Differential Equations,} {\bf 252} (2012)  2544-2562.  

\bibitem{P-Q-S} P. Pol\'{a}\v{c}ik, P. Quittner, Ph. Souplet,  Singularity and decay estimates in superlinear problems via Liouville-type theorems, {\it  Duke Math. J.,} {\bf  139}  (2007) 555-579.
        
\bibitem{YZ1} H. Yang, W. Zou, On isolated singularities of fractional semilinear elliptic equations, Ann.  I. H.   Poincar\'e-AN (2020), https://doi.org/10.1016/j.anihpc.2020.07.003.   arXiv:1804.00817.  


\bibitem{YZ2} H. Yang,  W. Zou,  Exact asymptotic behavior of singular positive solutions of fractional semi-linear elliptic equations,  {\it Proc.  Amer.  Math.  Soc.,}  {\bf 147} (2019) no. 7, 2999-3009.  
 

\bibitem{ZZ}  Qi S. Zhang, Z. Zhao, Singular solutions of semilinear elliptic and parabolic equations, {\it  Math. Ann.,} {\bf 310}  (1998) 777-794.  

\end{thebibliography}
\end{document}